\newtheorem{theorem}{Theorem}
\newtheorem{corollary}{Corollary}
\newtheorem{definition}{Definition}
	\title[Error Analysis and Improving the Accuracy of Winograd convolution for DNN]{Error Analysis and Improving the Accuracy of Winograd
		Convolution for Deep Neural Networks}
\author{Barbara Barabasz$^1$, Andrew Anderson$^1$, Kirk M. Soodhalter$^2$ and David Gregg$^1$}
\address{$^1$
	School of Computer Science and Statistics,
	Trinity College Dublin,
	Dublin 2,
	Ireland}
\address{$^2$
	School of Mathematics,
	Trinity College Dublin,
	Dublin 2,
	Ireland}
\email{barabasb@tcd.ie, aanderso@tcd.ie,ksoodha@maths.tcd.ie,dgregg@tcd.ie}
\keywords{floating point error, numerical analysis, Winograd algorithm, Toom-Cook algorithm, convolution, Deep Neural Network}	
\begin{document}	
\maketitle
\begin{abstract}
	
	Popular deep neural networks (DNNs) spend the majority of their
	execution time computing convolutions. The Winograd family of
	algorithms can greatly reduce the number of
	arithmetic operations required and is present in many DNN software
	frameworks. However, the performance gain is at the expense of a
	reduction in floating point (FP) numerical accuracy. In this paper, we
	analyse the worst case FP error and prove the estimation of norm and conditioning of the algorithm. We show that the bound grows exponentially with
	the size of the convolution, but the error bound of the \textit{modified} algorithm is smaller than the original one. 
	We propose several methods for reducing
	FP error. We propose a canonical evaluation
	ordering based on Huffman coding that reduces summation error. We study the
	selection of sampling ``points'' experimentally and find empirically
	good points for the most important sizes. We identify the main factors
	associated with good points.  In addition, we explore other methods to
	reduce FP error, including mixed-precision convolution, and pairwise
	summation across DNN channels.  Using our methods we can significantly
	reduce FP error for a given block size, which allows larger block
	sizes and reduced computation.

\end{abstract}

\section{Motivation}
\label{sec:motivation}

Deep Neural Networks (DNNs) have become powerful tools for image, video, speech and language
processing. However, DNNs are very computationally demanding, both during and after training. A large part of this computations consists of convolution operations, which are used across a variety of DNNs, and in particular convolutional neural networks (CNNs). As DNNs become ubiquitous, reducing the cost of DNN convolution is increasingly important.

Simple \textit{direct} convolution require $O(n^2)$ operations to
convolve a size $n$ input with size $n$ convolution
\textit{kernel}. In contrast, \emph{fast} convolution algorithms
require asymptotically fewer operations. For example, converting the
kernel and input to the Fourier domain with the \textit{fast Fourier
	transform} (FFT) requires just $O(n log_2(n))$ operations. In the
Fourier domain, convolution can be computed in $O(n)$ operations by
pairwise multiplication (Hadamard product) of the input vectors.

Although FFT convolution is a popular approach, within the area of
deep neural networks (DNNs) a less well known algorithm is widely
used. The Winograd family of fast convolution algorithms attempts to
minimize the number of operations needed for \emph{fixed-size small
	convolutions}. Around 1980, Winograd proved that a convolution of the
input of length $n$ with a kernel of length $n_h$ can be computed
using a theoretical minimum of just $n+n_h-1$ \textit{general
	multiplications} (Hadamard product operations) \cite{Winograd80b}.

Winograd convolutions use a predetermined triple of linear
transforms. The first two transform the input and the kernel to space
where, like in the Fourier domain, pointwise multiplication (Hadamard
product) can be used to perform convolution. The third transform moves
the result back to the space of the inputs.  In Winograd convolution,
each of the transforms requires $O(n^2)$ operations, as compared to
$O(n log_2(n))$ for FFT. Thus, Winograd convolution is efficient only
for very small convolutions, or where the cost of the transform can be
amortized over many uses of the transformed data. In DNN convolution,
the kernels are small, typically $3 \times 3$ or $5 \times 5$, and
large inputs can be broken into a sequence of smaller
segments. Further, each input is convolved with many kernels, and each
kernel with many input segments, so the cost of transform operations
is amortized over multiple uses. Further, DNNs often operate on a
\textit{mini-batch} of many inputs (typically 32--512) at a time,
which further increases the re-use of each transformed kernel.

Although the transforms are expensive, Winograd convolution can
guarantee the theoretical minimum number of \textit{general
	multiplications}. The Winograd transform of a real-valued input is
real-valued, so that real (not complex) multiplication is used for
pairwise multiplication (Hadamard product).  Real-valued
multiplication requires just one machine multiply, whereas complex
multiplication requires four multiplies and two adds, or three
multiplies and five adds \cite{Knuth98}. Compared with the FFT
approach, Winograd convolution allows for faster Hadamard product
computations, at the cost of more expensive transforms.

Winograd convolution has a further weakness. The linear transforms are
pathologically bad cases for FP accuracy, as we describe in Section
\ref{sec:error-tc}. To obtain a good level of numerical
accuracy it is normally necessary to break convolutions with a large
input into a sequence of smaller ones. However, recall that Winograd
convolution requires $n+h_h-1$ \textit{general multiplications} to
convolve an input of size $n$ with a kernel of size $n_h$. Thus, when
a large input is split into segments, there is an overhead of $n_h-1$
additional \text{general multiplications} for each segment.

In this paper, we address the question of numerical accuracy of
Winograd convolution for deep neural networks. Better numerical
accuracy allows inputs to be split into a smaller number of
larger segments, which in turn reduces the number of \textit{general
	multiplications}.  We take both analytical and experimental
approaches to the problem. We isolate the components of error which
can be identified with mathematical analysis, and establish empirical
bounds for the components which cannot. We make the following specific
contributions.

 \begin{SCfigure}[][h]
	 \includegraphics[scale=0.4]{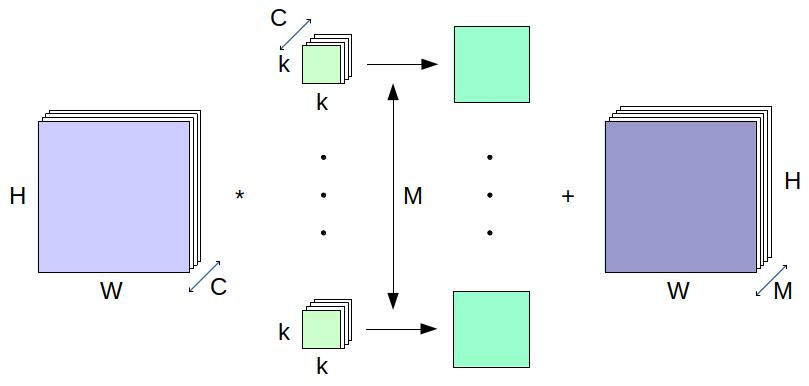}

	 \caption{Diagram of the structure of DNN convolution. 2D convolutions are
		 computed between kernels of size $k \times k$ and inputs of size $H \times
		 W$. Bundles of $C$ such convolution are summed pointwise to produce 1 output.
		 $M$ outputs are produced this way, and concatenated along the $M$ index to
		 produce the input for the next operation.}
	
	 \label{fig:DNN-diagram}
	 \end{SCfigure}

\vspace{-0.1cm}
\begin{itemize}
	
	\item We formalize and prove worst-case FP error bounds and
	identify error terms for the Toom-Cook algorithm, and show that error
	grows at least exponentially.
	
	\item We present a formal analysis of the error bounds for the
	``modified'' Toom-Cook algorithm, and prove that it has a lower,
	but nonetheless exponentially-growing error.
	
	\item We estimate the algorithm norm and conditioning.
	
	\item We demonstrate that order of evaluation of FP
	expressions in the linear transform impacts accuracy. We propose a
	canonical Huffman tree evaluation order that reduces average error
	at no additional cost in computation.
	
	\item We experimentally evaluate strategies for selecting
	algorithm coefficients for typical DNN convolutions. We show
	relationships between coefficients which improve accuracy.
	
	\item We investigate algorithms that use a higher (double) precision transform.
	These methods reduce the error typically by around one third in our experiments.

\end{itemize}
\section{Fast Convolution and Deep Neural Networks}

In DNN convolution, each input segment (or kernel) is typically
convolved with many kernels (or input segments). When the 2D
convolutions are implemented with a fast convolution algorithm,
the transforms are thus amortized over many reuses of the transformed
segments\footnote{Note that once a DNN has been fully-trained, its
	weight kernels become constant.  The kernels can be stored
	pre-transformed when using the trained network. During DNN training,
	the kernel is updated on each training iteration, so the transform
	of the convolution must be computed for each
	convolution. Nonetheless, the transformed kernels and input segments
	can be reused many times \textit{within} each DNN
	convolution.}. Therefore, we are primarily interested in convolution
algorithms that minimize the \textit{general multiplications} which
implement the pairwise multiplication (Hadamard product) step.

Winograd \cite{Winograd80b} proved that the minimum number of general
multiplications (that is the multiplications used in the Hadamard product) 
is $n + n_h - 1$. Winograd demonstrated that
the existing Toom-Cook method \cite{Toom63,Cook66} is capable of
generating optimal convolution algorithms that achieve this minimum.
Winograd also developed his own method for generating fast algorithms.

In 2016 Lavin and Gray \cite{Lavin16} demonstrated that Winograd
convolution can be around twice as fast as \emph{direct} convolution
in DNNs \cite{Lavin16}.  A key contribution of their paper is an
algorithm to break multi-channel multi-kernel DNN convolution
into smaller segments that can be computed with
matrix multiplication. Lavin and Gray actually used the
Toom-Cook rather than Winograd's method to generate their convolution
algorithms\footnote{See Lavin and Gray's source code at:
	http://github.com/andravin/wincnn}. However, they described their
approach as ``Winograd convolution'' and within the DNN research
literature that term has come to include both Toom-Cook and Winograd
methods.

\begin{SCfigure}[][h]
	\includegraphics[scale=0.3]{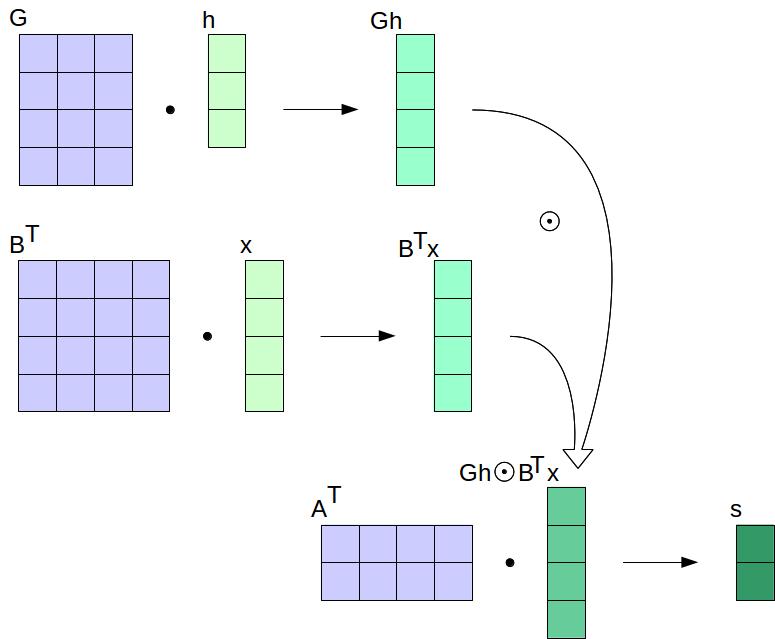}
	\hspace{1cm}
	
	\caption{Toom-Cook Convolution. The forward linear transform of
		kernel $h$ is computed by multiplication with matrix $G$. Similarly, the
		forward linear transform of input $x$ is computed by multiplication with a
		matrix $B^T$. Corresponding elements of the two resulting vectors are then
		multiplied pairwise (compute Hadamard product) to perform the convolution.
		The resulting vector is the \emph{image under the linear transform} of the actual
		convolution output. To translate the result back to the domain of the inputs,
		$h$ and $x$, the backward transform is applied by multiplication with a matrix
		$A^T$ to produce the final output $s$.}
	
	\label{fig:Toom-Cook}
\end{SCfigure}

\subsection{Decomposing Convolution and Minimizing General Multiplications}

A convolution of any output size $n_o > 1$ can be decomposed into the
sum of smaller convolutions. For example, a convolution with $n_o = 8$
can be computed as eight convolutions with $n_o=1$ (i.e. direct
convolution), four convolutions with $n_o=2$, two convolutions with
$n_o=4$, or one convolution with $n_o=8$.  With a kernel of size
$n_h=3$, the total number of general multiplications for each of these
decompositions will be $8 \times 3 = 24$, $4 \times 4 = 16$, $2 \times
6 = 12$ or $1 \times 10 = 10$ respectively.

The larger the size of each sub-convolution, the fewer general
multiplications are needed to compute the total output. Unfortunately,
bigger output sizes lead to larger FP errors. In fact, as we show in
Section \ref{sec:error-tc}, the error grows at least exponentially
with $n_o+n_h-1$.

Table \ref{tab:mult} summarizes the number of general multiplications
\emph{per output point} for different output block sizes using a
selection of typical kernel sizes from real-world DNNs. Clearly, we
would like to benefit from the efficiency of large output block
sizes. For example, a $5 \times 5$ Toom-Cook convolution with an
output block size of $12 \times 12$ (bottom right) uses around
$14\times$ fewer \textit{general multiplications} per output point
than direct convolution (top right). However, the error for $n_h = 5
\times 5$ is so large, that in practice sizes $n_o > 3 \times 3$ are
not used in any current DNN software framework.

\begin{table}[h]
	\caption{Number of multiplications per single output point for direct and Toom-Cook convolutions for different kernel and output size}
	\label{tab:mult}
	\begin{minipage}{\columnwidth}
		\begin{small}
			\begin{center}
				\begin{tabular}{l|cc|cc|cc|cc}
					\toprule
					No of & Output & Mult/ & Output size & Mult/ & Output & Mult/ & Output & Mult/ \\
					points & $K=3$ & output & $K=3 \times 3$ & output & for $K=5$ & output & $K=5 \times 5$ & output\\
					\hline 
					0     & 1   & 3    & 1$\times$1   & 9    & 1 & 5   & 1$\times$1  & 25\\
					4     & 2   & 2    & 2$\times$2   & 4    & - & -   & -  & -\\
					5     & 3   & 1.67 & 3$\times$3   & 2.78 & - & -   & -  & -  \\
					6     & 4   & 1.5  & 4$\times$4   & 2.25 & 2 & 3   & 2$\times$2  & 9  \\
					7     & 5   & 1.4  & 5$\times$5   & 1.96 & 3 & 2.33& 3$\times$3  & 5.44 \\
					8     & 6   & 1.34 & 6$\times$6   & 1.78 & 4 & 2   & 4$\times$4  & 4\\
					9     & 7   & 1.29 & 7$\times$7   & 1.65 & 5 & 1.8 & 5$\times$5  & 3.24 \\
					10    & 8   & 1.25 & 8$\times$8   & 1.56 & 6 & 1.67& 6$\times$6  & 2.78 \\
					11    & 9   & 1.22 & 9$\times$9   & 1.49 & 7 & 1.57& 7$\times$7  & 2.47 \\
					12    & 10  & 1.2  & 10$\times$10 & 1.44 & 8 & 1.5 & 8$\times$8  & 2.25 \\
					13    & 11  & 1.18 & 11$\times$11 & 1.4  & 9 & 1.44& 9$\times$9  & 2.09 \\
					14    & 12  & 1.17 & 12$\times$12 & 1.36 & 10& 1.4 & 10$\times$10& 1.96\\
					15    & 13  & 1.15 & 13$\times$13 & 1.33 & 11& 1.36& 11$\times$11& 1.86 \\
					16    & 14  & 1.14 & 14$\times$14 & 1.31 & 12& 1.33& 12$\times$12& 1.78
				\end{tabular}
			\end{center}
			\bigskip\centering
		\end{small}
	\end{minipage}
\end{table}

\section{Toom-Cook algorithm}
\label{sec:Toom-Cook}
In this section we describe the Toom-Cook convolution algorithm. It is based on the Chinese Remainder Theorem (CRT) for polynomials and the Matrix Exchange Theorem. Toom \cite{Toom63} and Cook \cite{Cook66} provide details on the theoretical background. Parhi \cite{Parhi07}, Tolimieri \cite{Tolimeri97} and Blahut \cite{Blahut10} provide useful descriptions of using the Toom-Cook algorithm to perform a discrete convolution.
The one-dimensional discrete convolution of two vectors $h\begin{bmatrix}h_1 & \cdots & h_{n_h}\end{bmatrix}$ and $x=\begin{bmatrix}x_1 & \cdots & x_n\end{bmatrix}$ is the vector $s=h \ast x$ where $s_i = \sum_{j=1}^ih_ix_{i-j}$.

The main idea of Toom-Cook convolution is to transform the kernel and input into the \emph{modulo polynomial} domain where convolution becomes an element-wise multiplication (Hadamard product) and then transform the result back. We construct three matrices, one for the kernel transform, one for the input transform, and one to transform the result back. We denote these matrices as $G$, $A$ and $B$ respectively.

\begin{theorem}[Chinese Remainder Theorem for polynomials]
	\label{thm.chinese-remainder}
	Let $\mathbb{F}[a]$ be the ring of all polynomials over a field $\mathbb{F}$.
	Consider the polynomial $M(a) \in \mathbb{F}[a]$  such as $M(a)=m_1(a)...m_{\ell}(a)$ where $m_i(a)$ irreducible $\forall i=1,2,..,\ell$ and
	$GCD(m_i(a),m_j(a))=1$ $\forall i=1,2,..,\ell, \ i \neq j$. Let $s_i(a) \in \mathbb{F}[a]$ be any polynomials for $i=1,2,..,\ell$.
	Then there exists $s(a) \in \mathbb{F}[a]$ a unique solution of the system of congruences: 
	$$s(a) = s_i(a) \; mod \; m_i(a)\qquad \forall\ i =1,2,..,\ell$$
	and
	$$s(a) = \sum_{i=1}^{\ell} s_i(a)N_i(a)M_i(a) \; mod \; M(a)$$
	where:
	$$N_i(a)M_i(a)+n_i(a)m_i(a)=1,\qquad N_i(a), \; M_i(a) = \frac{M(a)}{m_i(a)},\ \mbox{and}\qquad m_i(a) \in \mathbb{F}[a].$$ 
\end{theorem}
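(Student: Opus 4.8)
The plan is to exploit the fact that $\mathbb{F}[a]$ is a Euclidean domain—and hence a principal ideal domain—so that the division algorithm, Bézout's identity, and unique factorization are all available exactly as in $\mathbb{Z}$. The proof then follows the classical constructive template for the CRT: assemble the cofactors and their Bézout inverses, write down an explicit candidate solution, verify it satisfies each congruence, and finally establish uniqueness modulo $M(a)$.

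First I would establish the coprimality that underpins the whole construction. Since the $m_i(a)$ are pairwise coprime, for each fixed $i$ the cofactor $M_i(a) = M(a)/m_i(a) = \prod_{j \neq i} m_j(a)$ shares no common factor with $m_i(a)$; that is, $\gcd(M_i(a), m_i(a)) = 1$. This invokes the standard PID lemma that a product of elements each coprime to $m_i(a)$ is itself coprime to $m_i(a)$. Applying Bézout's identity in $\mathbb{F}[a]$ then produces polynomials $N_i(a)$ and $n_i(a)$ with
$$N_i(a) M_i(a) + n_i(a) m_i(a) = 1,$$
which is precisely the relation asserted in the statement and supplies the building blocks $N_i(a)M_i(a)$. (Note that the irreducibility hypothesis on the $m_i(a)$ is only used insofar as it, together with distinctness, guarantees the coprimality already assumed; coprimality alone drives the argument.)

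Next, for existence, I would set $s(a) = \sum_{i=1}^{\ell} s_i(a) N_i(a) M_i(a)$ and reduce modulo a fixed $m_k(a)$. For every $i \neq k$, the factor $m_k(a)$ divides $M_i(a)$, so those summands vanish modulo $m_k(a)$. For the surviving term $i = k$, the Bézout relation gives $N_k(a) M_k(a) \equiv 1 \pmod{m_k(a)}$, whence $s(a) \equiv s_k(a) N_k(a) M_k(a) \equiv s_k(a) \pmod{m_k(a)}$. Since $k$ was arbitrary, this single $s(a)$ solves all $\ell$ congruences simultaneously, and reducing the sum modulo $M(a)$ gives the stated closed form.

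Finally, for uniqueness, if $s(a)$ and $\tilde{s}(a)$ are two solutions, then their difference is divisible by each $m_i(a)$; because the $m_i(a)$ are pairwise coprime, their product $M(a)$ divides $s(a) - \tilde{s}(a)$, so the two agree modulo $M(a)$. The only real obstacle—routine in this setting—is transferring the familiar integer coprimality lemmas (product-of-coprimes-is-coprime, and pairwise-coprime-divisors-multiply-to-a-common-divisor) to $\mathbb{F}[a]$; both rest on the Euclidean/PID structure and Bézout's identity, so no genuinely new difficulty arises beyond invoking that structure carefully.
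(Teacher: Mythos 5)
Your proof is correct and complete: the Bézout construction of the $N_i(a)$, the verification that $s(a)=\sum_i s_i(a)N_i(a)M_i(a)$ reduces to $s_k(a)$ modulo each $m_k(a)$, and the uniqueness argument via pairwise coprimality forcing $M(a)\mid s(a)-\tilde{s}(a)$ constitute the standard textbook proof of the polynomial CRT, with each transfer of integer lemmas to $\mathbb{F}[a]$ justified by the Euclidean/PID structure. Note, however, that the paper itself gives no proof of this theorem at all --- it is stated as classical background, with the reader referred to Toom, Cook, and Blahut --- so there is no in-paper argument to compare yours against. Two of your side remarks are worth keeping: the observation that irreducibility of the $m_i(a)$ is superfluous (pairwise coprimality alone drives the argument), and the correct reading of ``unique'' as uniqueness modulo $M(a)$, i.e., uniqueness of the residue class (equivalently, of the representative of degree less than $\deg M(a)$), which is the only sense in which the statement can hold.
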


Let us assume in what follows that $\mathbb{F}$ is equal to $\mathbb{R}$, a field of real numbers, and represent the one-dimensional kernel vector 
$h=\begin{bmatrix} h_1 & h_{2} & \cdots  &h_{n_h} \end{bmatrix}$ 
and one-dimensional input vector 
$x=\begin{bmatrix} x_1 & x_{2} & \cdots  &x_{n} \end{bmatrix}$ 
as polynomials $h(a)$ and $x(a)$, with coefficients equal to their respective components, such that the leading coefficients of $h(a)$ and $x(a)$ are taken to be $h_{n_h}$  and $x_n$, respectively. Then computing the one-dimensional discrete convolution is equivalent to computing the coefficients of the polynomial product \linebreak $s(a)=h(a)x(a)$.

In the Toom-Cook algorithms, it is assumed that all $m^{(i)}(a)$ are monomials; so the computation reduces to the following steps:
\begin{enumerate}
	\item Choosing points $p_i$ to construct polynomials $m_i(a)=a-p_i$;
	\item Evaluating polynomials $h(a)$, $x(a)$ at each point $p_i$ to change the domain, which is equivalent to computing $h_i(a)=h(a) \; mod \; m_i(a)$ and $x_i(a)=x(a) \; mod \; m_i(a)$;
	\item Performing the multiplication $s_i(a)=h_i(a)x_i(a)$; and
	\item Applying the Chinese Reminder Theorem to compute the coefficients of the polynomial $s_i$.
\end{enumerate}
We can represent this algorithm as:
$$V^{-1}(V_x x \odot V_h h)$$
where matrices $V_x$ and $V_h$ (called Vandermonde matrices) represent transformation into the \emph{modulo polynomial} domain, which is equivalent to evaluation of the polynomial in different points $p_i$. The matrix $V^{-1}$ is the inverse Vandermonde matrix for the transformation of the result back from the \emph{modulo polynomial} domain. The nonsingularity of these matrices is guaranteed by choosing the different $a_i$ so that the assumptions of the CRT are fulfilled.

\subsection{Matrix Interchange}

It is possible to interchange matrices in the convolution formula using the Matrix Exchange theorem. \\
\begin{theorem}
	Let $M$ be a diagonal matrix. If matrix $M$ can be factorised as \linebreak $M=CDE$ then it also can be factorised as $M=(\overline{E})^TD(\underline{C})^T$, where matrix $\overline{E}$ is a matrix obtained from $E$ by reversing the order of its columns and $\underline{C}$ is a matrix obtained from $C$ by reversing its rows.
\end{theorem}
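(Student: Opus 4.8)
The plan is to encode the two reversal operations as multiplication by the exchange (reversal) matrix $J$ — the square matrix with ones on the anti-diagonal and zeros elsewhere — and then push everything through transposition. First I would record the elementary facts $J=J^T$ and $J^2=I$, together with the observation that left-multiplication by $J$ reverses the rows of a matrix while right-multiplication reverses its columns. With conformably sized exchange matrices this gives $\underline{C}=JC$ and $\overline{E}=EJ$. These identities are purely combinatorial and are the only place the definitions of $\underline{C}$ and $\overline{E}$ enter the argument.

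Next I would transpose and substitute. Using $(XY)^T=Y^TX^T$ and $J^T=J$ we obtain $(\overline{E})^T=(EJ)^T=JE^T$ and $(\underline{C})^T=(JC)^T=C^TJ$, so the candidate factorization becomes
$$(\overline{E})^T\, D\, (\underline{C})^T = J\, E^T D\, C^T\, J.$$
Since $D$ is diagonal it is symmetric, $D=D^T$, and therefore $E^T D C^T = E^T D^T C^T = (CDE)^T = M^T$. Hence the right-hand side collapses to $J M^T J$, and the entire claim reduces to the single identity $M = J M^T J$.

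The crux is precisely this last identity. Because $M$ is diagonal we have $M=M^T$, so the statement is equivalent to $M=JMJ$, i.e.\ invariance of $M$ under simultaneous reversal of rows and columns: conjugation by $J$ sends the $(i,j)$ entry to the $(n+1-i,\,n+1-j)$ entry, and the required symmetry is the flip-invariance (persymmetry) of the matrix being factored. This is where the real content lies, and I would expect it to be the main obstacle, since the transposition algebra of the first two steps is routine whereas this step is the only one that depends on the actual structure of $M$ rather than on formal manipulation of $J$. I would close the proof by verifying that symmetry directly from the form of $M$ in the present setting.
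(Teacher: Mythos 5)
Your reduction is the standard one, and the transposition algebra is sound: writing $\underline{C}=JC$, $\overline{E}=EJ$, you correctly collapse the claim to the single identity $M=JM^TJ$. The genuine gap is that this identity is where you stop, and under the hypothesis you are working from it cannot be established. A diagonal matrix satisfies $M=JMJ$ only when its diagonal is a palindrome, since conjugation by $J$ reverses the order of the diagonal entries; nothing in the statement forces that. Concretely, take $M=\mathrm{diag}(1,2)$ with the factorisation $C=M$, $D=E=I$: then $(\overline{E})^TD(\underline{C})^T=JMJ=\mathrm{diag}(2,1)\neq M$, so the step you defer to the end (``verifying that symmetry directly from the form of $M$'') would fail, and the theorem as literally printed is false. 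Note also that your argument silently uses $D=D^T$, which the printed statement does not grant either --- it attributes diagonality to $M$, not to $D$.

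What your attempt actually exposes is that the paper's hypothesis is garbled: diagonality belongs to the middle factor, not to $M$. In the paper's application the factorisation is $M=CDE$ with $C=V^{-1}$, $D=\mathrm{Diag}(V_hh)$, $E=V_x$, so $D$ is the diagonal matrix, while $M=V^{-1}\mathrm{Diag}(V_hh)V_x$ is the matrix of convolution by $h$ --- a Toeplitz matrix. The property that closes the proof is persymmetry, $M_{ij}=M_{n+1-j,\,n+1-i}$, which every Toeplitz matrix has (entries constant along diagonals) and which is exactly the statement $JM^TJ=M$; it is not a consequence of, nor related to, $M$ being diagonal. With the corrected hypotheses (``$D$ diagonal, $M$ Toeplitz/persymmetric'') your $J$-conjugation computation is precisely the standard proof of the Matrix Exchange Theorem and is complete once that one-line entrywise check is added. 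For what it is worth, the paper itself states this theorem without proof (it is quoted from the Toom--Cook/Winograd literature), so there is no in-paper argument to compare against; the literature proof is the one you set up, finished off by Toeplitz persymmetry rather than by diagonality.
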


Although the literature on DNNs typically calls this operation convolution, from a mathematical point of view the operation we want to compute is, in fact, the \emph{correlation}. This is why, when applying the Matrix Exchange Theorem, we do not reverse the order of columns in matrix $E$. Thus

$$V^{-1}(V_xx \odot V_hh) = V^{-1}Diag(V_hh)V_xx={V_x}^T(V_hh \odot V^{-T}x)$$
Putting $A=V_x$, $G=V_h$ and $B=V^{-1}$ we obtain the following formula for one-dimensional convolution
$$(h \ast x)_{1D}=A^T(Gh \odot B^Tx)$$
In a similar way, using the Kronecker product, we obtain a formula for two-dimensional convolution
$$(H \ast X)_{2D}=A^T(GHG^T \odot B^TXB)A$$
where matrices $H$ and $X$ are the two-dimensional kernel and input, respectively. \\

\subsection{Linear Transform Matrix Construction}
\label{subsec:Matrices_constr}

The method of constructing matrices $A^T$, $G$ and $B^T$ is presented in Algorithm \ref{alg:Toom-Cook}.
To compute a 1D convolution of size $n_o$ with the kernel of size $n_h$, we need a input of size $n=n_h+n_o-1$. As inputs to the algorithm we provide $n$ different real points $p_1,..,p_{n}$ and use them to construct $n$ linear polynomials $m_i(a)=x-p_i$, for $i=1,..,n$. We compute polynomial $M(a)=(a-p_1)..(a-p_n)$ and polynomials $M_i(a)=M(a)/m_i(a)=\sum_j{M_{j,i}a^{j-1}}$ for $i=1,2,\ldots,n$ and $j=0,1,\ldots,n$ used in CRT.

The matrix $A^T$ is a transposed rectangular Vandermonde matrix of size $n_o \times n$. We compute its elements as the $0$th to $n_o-1$th powers of the $n$ selected points.
Next, we construct the matrix $G$ of size $n \times n_h$ in a very similar way.
Note that we scale one of the Vandermonde matrices by coefficients $N_i$ to obtain matrices $G$ and $B^T$.
We find the coefficients $N_i$ using the Euclidean algorithm \cite{Biggs02}.

{\fontsize{8}{10}
	\begin{algorithm}[h]
		\begin{multicols}{2}
			\SetAlgoNoLine
			\KwIn{$n_o$ - size of output, $n_h$ - size of kernel, \; $\{p_1,\cdots,p_n\}$ set of $n$ different points}
			\KwOut{Three matrices $A^T$, $G$ $B^T$ for Toom-Cook convolution}
			$n=n_o+n_h-1$\;
			\For{$i=1$ to $n$}{
				\For{$j=1$ to $n$}{
					$M_{i,j}$ = coefficient of the polynomial $\prod_{k \neq i}({a-p_k})$ stands for $a^{j-1}$
				}
			}
			\For{$i=1$ to $n$}{
				$N_i=\frac{1}{\prod_{j \neq i}(p_i-p_j)}$ }
			\For{$i=1$ to $n_o$}{
				\For{$j=1$ to $n$}
				{
					$A^T_{i,j}={p_j}^{i-1}$
				}
			}
			\For{$i=1$ to $n$}{
				\For{$j=1$ to $n_h$}{
					$G_{i,j}={p_{i}}^{j-1}*N_i$
				}
			}
			\For{$i=1$ to $n$)}{
				\For{$j=1$ to $n$}{
					$B^T_{i,j}=M_{j,i}$
				}
			}
		\end{multicols}
		\caption{Toom-Cook algorithm}
		\label{alg:Toom-Cook}
	\end{algorithm}
}

The general form of matrices obtained by the Toom-Cook algorithm is as follows:\\

\footnotesize
$$
{\arraycolsep=2pt
	G =\begin{bmatrix}
	1 & p_1*N_1 & \cdots & {p_1}^{n_h-1}*N_1 \\
	1 & p_2*N_2 & \cdots& {p_2}^{n_h-1}*N_2 \\
	\vdots & \vdots & \ddots & \vdots \\
	1 & p_{n}*N_n & \cdots & {p_{n}}^{n_h-1}*N_n
	\end{bmatrix};\ \ 
	A^T =\begin{bmatrix}
	1 & 1 & \cdots & 1 \\
	p_1 & p_2 & \cdots& p_{n} \\
	\vdots & \vdots & \ddots & \vdots \\
	p_1^{n_o-1} & p_2^{n_o-1}  & \cdots & {p_{n}}^{n_o-1}
	\end{bmatrix}; \mbox{ and}\ \ \  
	B^T=\begin{bmatrix}
	M_{1,1} & \cdots & M_{1,n} \\
	\vdots & \ddots & \vdots\\
	M_{n,1} & \cdots & M_{n,n}
	\end{bmatrix}.
}
$$\normalsize

\subsubsection*{A note on matrix construction}
Theoretically, the evaluation of the polynomials at the chosen
interpolation points corresponds to the action of square Vandermonde
matrices on the coefficient vectors.  These matrices are nonsingular
due to our choice of points $p_{i}$.  Thus, in our analysis, we use
properties of square Vandermonde matrices to understand the stability
properties of the Toom-Cook algorithm and conditioning of the
underlying calculation.  The properties of \textit{square} Vandermonde
matrices are well understood \cite{Pan16}, but the matrices $G$ and
$A$ as described in our implementation are actually rectangular.
However, this is an advantage we take at the algorithmic level rather
than a mathematical property of the interpolation process.  We can
mathematically interpret the actions of $G$ and $A$ as square
Vandermonde matrices acting on vectors whose last entry is zero.
Thus, we can analyse these methods in terms of the square matrices
while the implementation is done in terms of rectangular matrices
which are the square matrices with the last column deleted.

The matrix $G$ shown in Figure \ref{fig:Toom-Cook} has only three
elements in each row, rather than four, because the kernel in the
example has just $n_h=3$ elements. The full (square) Vandermonde
matrix $G$ actually has four elements per row, with the fourth element
computed in the same pattern as the first three. The kernel $h$ also
has four elements, but the fourth is always zero. Thus, the fourth
element of each row of $G$ is multiplied by the fourth element of $h$
which is always zero. As a result, we can safely eliminate the last
column of the square Vandermonde matrix $G$, and crucially, \emph{all
	associated computation}.

Similarly, $A^T$ in Figure \ref{fig:Toom-Cook} is shown with just two rows
rather than four, because in this example we compute an output block of size
two (that is equivalent to the number of \emph{fully computed} elements).
However, we could equally show all four rows of the Vandermonde matrix $A$ and
discard two of the computed results.

\section{Error in Toom-Cook Convolution}
\label{sec:error-tc}
In this section, we derive a bound on the FP error that can
arise in Toom-Cook convolution. We use the methods and notation of
Higham's standard textbook on FP error \cite{Higham02}. In line with
Higham, and many other formal analyses of FP error
\cite[p. 48]{Higham02}, we provide a worst-case error analysis.

\subsection{FP error}

FP error arises because FP numbers are limited precision
approximations of real numbers. Each real number can be mapped to its
nearest FP equivalent with the rounding function $x
\rightarrow fl(x)$ such that $fl(x)=min_{f \in F}(f-x)$. Where the
absolute value of a real number is larger than the largest
representable FP number, the number is said to overflow. Overflow
results in a catastropic loss of
accuracy, but it is rare at least within the field of DNNs. In the
absence of overflow, we have the assumption $fl(x)=x(1+\delta)$ where $-\varepsilon < \delta < \varepsilon$ and $\varepsilon$ 
is a machine epsilon dependent on
precision.  Similarly provided there is no overflow in inputs or
results, FP arithmetic operators can be described as follows: $fl(x \;
\mathrm{op} \; y)=(x \; \mathrm{op} \; y)(1+\delta)$, where $|\delta|
\leq \varepsilon$, $x,y \in F$ and $\mathrm{op}\in\left\lbrace +,-,*,/\right\rbrace$; see, e.g., 
\cite{Higham02,Goldberg91,Wilkinson94}.
In this paper, for a quantity $x$, we denote the floating point representation of $x$ 
by $\hat{x}$ and FP operations by $fl(\cdot)$.

\subsection{FP Error in the Linear Transforms}
\label{subsec:LinTransf}

The core operation in the linear transforms is a
matrix-vector product, which can be represented as a set of dot products
$a^Tx$.  Let us take an input vector $x=\begin{bmatrix} x_1 & x_{2} & \cdots & x_n \end{bmatrix}^{T} $ where $x_i \in
F$, $\forall i=1,2,\ldots,n$, and another vector $a= \begin{bmatrix} a_1 & a_{2} &\cdots  & a_n \end{bmatrix}^{T} $ which is
part of the algorithm, so $fl(a)= \begin{bmatrix} fl(a_1) & fl(a_2) & \cdots & fl(a_n) \end{bmatrix}^{T} $ and \linebreak
$fl(a_i)=a_i(1+\delta_i)$, where $|\delta_i| \leq \varepsilon$
$\forall a_i=1,2,\ldots,n$ \cite{Higham02}.  Then

\begin{equation}
\label{eq:dot_prod}
|a^Tx - fl(fl(a^T)x)| \leq |a^T||x|\alpha^{(n)}\varepsilon+O(\varepsilon^2)
\end{equation}

Higham provides a similar bound on the error for dot product but uses $n$ where we use $\alpha^{(n)}$.
That is because he assumes linear summation in dot product computations.
There is a wide range of summation methods that allows us to compute dot product with smaller floating point error than using linear summation. Demmel and Rump have analysed various summarion algorithms. The algorithms, as well as their floating point  error estimations, can be found in (\cite{Rump08I}, \cite{Rump08II}, \cite{Demmel04}).
Fo generality, we do not assume any particular method of
dot product evaluation. Instead, we use $\alpha^{(n)}$,
which stands for the error of dot product computations for vectors of $n$ elements.

Also in our analysis, the vector $a^T$ is a constant, not an input. The value of $a^T$ depends on the parameters of the algorithm. We write $fl(a^T)$ because the mathematically exact value of $a^T$ may not be exactly representable in finite precision FP. We want to estimate the error of the algorithm, as it depends on these parameters, as well as of the number and type of operations.

Note that the value of $\alpha^{(n)}$ depends on the error from multiplication, as well as on $n$ and on the method of summation.
We have three possible cases that give us different boundaries for the error of multiplication $a_ix_i$:
\begin{itemize}
	\item Values of $a_i$ are not exactly representable in $F$ (the set of FP numbers). In this case we have an error from the inexact representation of $a_i$ and from the multiplication, so $fl(a_i)=a_i(1+\delta_i)$ where $|\delta_i| \leq \varepsilon$. Then $|fl(fl(a_i)x_i)-a_ix_i| \leq |a_i||x_i|2\varepsilon+O(\varepsilon^2)$, $\forall i=1,2,\ldots,n$.
	\item Values of $a_i$ are exactly representable in $F$. In this case only the multiplication and summation errors remain, that is: $|fl(fl(a_i)x_i)-a_ix_i| \leq |a_i||x_i|\varepsilon+O(\varepsilon^2)$, $\forall i=1,2,\ldots,n$.
	\item When of $a_i$ are integer powers of $2$ we have no error from either representation or from multiplication, so $|fl(fl(a_i)x_i)-a_ix_i| \leq |a_i||x_i|$, $\forall i=1,2,\ldots,n$.
\end{itemize}

If we assume \emph{linear} summation in Equation~\ref{eq:dot_prod} we have
$\alpha^{(n)}=n+1$ for any elements $a_i$, $\alpha^{(n)}=n$ for $a_i$ exactly
represented in $F$ and $\alpha^{(n)}=n-1$ if all $a_i$ are integer powers of $2$. However $n-1 < n < n+1$ so using $\alpha^{(n)}=n+1$ is a correct estimate but does not give the tightest possible bound.\\

\subsection{Toom-Cook Convolution Error Estimation}
\label{sec:Toom-Cook-error}

In this section, we present a formal error analysis of the Toom-Cook convolution algorithm, which to our knowledge is the first such formulation. Our approach uses the Higham \cite{Higham02}  method of FP error estimation and results on the instability of Vandermonde systems by Higham \cite{Higham02} and Pan \cite{Pan16}. The error estimation allows us to show that the Toom-Cook convolution algorithm is unstable and to identify the components of the error.

The Toom-Cook method generates algorithms for fixed-size convolution,
which are expressed as a set of three matrices, $G$, $B^T$ and $A^T$.
These matrices are computed once, ahead of time, and can be used with
many different inputs and kernels.  Figure \ref{fig:Toom-Cook} shows
the three steps of the algorithm: (a) linear transforms of the kernel,
$h$, and input, $x$; (b) pairwise multiplication between the elements
of the transformed input and kernel (Hadamard product); and (c) the output
linear transform. All of these operations have an impact on the
accuracy of the result, so we see terms in our error for each
operation.

To estimate the error bounds we will use the matrix norm $\|A\|_1=max_j \sum_i|A_{ij}|$ that is induced by the vector norm $\|y\|_1=\sum_i |y_i|$, and also the matrix norm \linebreak $\|A\|_F=(\sum_i \sum_j |a_{ij}|^2)^{1/2}$ (called the Euclidean or Frobenius norm), which is not an induced norm.   However, it is equivalent to the
matrix norm $\|A\|_2$ induced by vector norm $\|y\|_2=(\sum_i|y_i|^2)^{1/2}$ with the inequality 
\begin{equation}\label{eqn.Frob2NormBounds}
\|A\|_2\leq \|A\|_F \leq \sqrt{r}\|A\|_2,
\end{equation}
where $r$ is the
rank of $A$.
The Euclidean norm is used very often in numerical analysis instead of $\|\cdot\|_2$, because it is easier to compute and  $\|A\|_F=\|\,|A|\,\|_{F}$, where $|A|$ is matrix with entry-wise absolute values of the entries of $A$ \cite{Wilkinson94}. We define $\alpha^{(n)}$, $\beta^{(n)}$ and $\gamma^{(n_h)}$ as constants used in dot product FP error bounds in Equation~\ref{eq:dot_prod} for matrices $A^T$, $B^T$ and $G$.

\begin{theorem}
	\label{theorem:TC-1D-error}
	The error for one-dimensional Toom-Cook convolution computation satisfies the normwise bound equal to:
	\begin{equation}
	\label{eq:TC-1D-norm}
	\|\hat{s} - s\|_1 \leq \|A^T\|_1\thinspace\|G\|_F\thinspace\|h\|_2\thinspace\|B^T\|_F\thinspace\|x\|_2\thinspace\left(\alpha^{(n)}+\beta^{(n)}+\gamma^{(n_h)}+1\right)\thinspace\varepsilon+O(\varepsilon^2)
	\end{equation}
	
	Error for the $q$th element of one-dimensional Toom-Cook convolution computation satisfies the bound equal to:
	\begin{equation}
	\label{eq:TC-1D-element}
	|{\hat{s}}_q-s_q| \leq |A^T|\left(|G||h|\odot|B^T||x|\right)\left(\alpha^{(n)}+\beta^{(n)}+\gamma^{(n_h)}+1\right)\varepsilon + O(\varepsilon^2)
	\end{equation}
	Where values of $\alpha^{(n)}$, $\beta^{(n)}$ and $\gamma^{(n_h)}$ depends on method of summation in dot product computations in matrices $A^T$, $B^T$ and $G$ as in formula (\ref{eq:dot_prod})

\end{theorem}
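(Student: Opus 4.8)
The plan is to track the rounding error through the three computational stages of the formula $s = A^T(Gh \odot B^Tx)$ and then assemble the per-stage contributions into the stated bounds. I would introduce the exact intermediate quantities $u = Gh$, $v = B^Tx$, and $w = u \odot v$, so that $s = A^Tw$, and let $\hat{u}, \hat{v}, \hat{w}, \hat{s}$ denote their floating-point counterparts computed in the natural order. First I would apply the dot-product bound (Equation~\ref{eq:dot_prod}) row by row to the two forward transforms. Since each row of $G$ has length $n_h$ and each row of $B^T$ has length $n$, this gives the component-wise estimates $|\hat{u} - u| \leq |G||h|\,\gamma^{(n_h)}\varepsilon + O(\varepsilon^2)$ and $|\hat{v} - v| \leq |B^T||x|\,\beta^{(n)}\varepsilon + O(\varepsilon^2)$, where absolute values and inequalities are taken entry-wise.

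Next I would propagate these errors through the Hadamard product. Writing $\hat{u}_i = u_i + \Delta u_i$, $\hat{v}_i = v_i + \Delta v_i$, and $\hat{w}_i = fl(\hat{u}_i \hat{v}_i) = \hat{u}_i \hat{v}_i(1+\delta_i)$ with $|\delta_i| \leq \varepsilon$, and discarding the product of two error terms as $O(\varepsilon^2)$, the first-order part is $\hat{w}_i - w_i = u_i \Delta v_i + v_i \Delta u_i + u_i v_i \delta_i + O(\varepsilon^2)$. Using $|u_i| \leq (|G||h|)_i$ and $|v_i| \leq (|B^T||x|)_i$ to bound the coefficients, each of the three terms collapses to a multiple of $(|G||h|)_i(|B^T||x|)_i$, yielding $|\hat{w}-w| \leq (|G||h| \odot |B^T||x|)(\beta^{(n)}+\gamma^{(n_h)}+1)\varepsilon + O(\varepsilon^2)$; the extra $+1$ is precisely the multiplication error $\delta_i$ of the Hadamard step. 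For the final transform I would split $\hat{s} - s = [fl(fl(A^T)\hat{w}) - A^T\hat{w}] + A^T(\hat{w}-w)$. The first bracket is controlled by Equation~\ref{eq:dot_prod} as $|A^T||\hat{w}|\,\alpha^{(n)}\varepsilon + O(\varepsilon^2)$, and since $|\hat{w}| = |w| + O(\varepsilon)$ while $|w| \leq |G||h| \odot |B^T||x|$, replacing $|\hat{w}|$ by this bound changes the estimate only at order $\varepsilon^2$. The second term contributes $|A^T|$ times the Hadamard-product bound just derived. Adding the $\alpha^{(n)}$ from the output transform to the accumulated $(\beta^{(n)}+\gamma^{(n_h)}+1)$ gives exactly the element-wise bound of Equation~\ref{eq:TC-1D-element}.

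To obtain the normwise bound of Equation~\ref{eq:TC-1D-norm} I would apply $\|\cdot\|_1$ to the element-wise result. Submultiplicativity of the induced $1$-norm together with the identity $\||A^T|\|_1 = \|A^T\|_1$ (the $1$-norm already involves absolute values) pulls out the factor $\|A^T\|_1$. The remaining quantity $\||G||h| \odot |B^T||x|\|_1$ is a sum of products of non-negative entries, so the Cauchy--Schwarz inequality bounds it by $\||G||h|\|_2\,\||B^T||x|\|_2$. Finally, submultiplicativity of the $2$-norm, the inequality $\|\cdot\|_2 \leq \|\cdot\|_F$ from Equation~\ref{eqn.Frob2NormBounds}, and the insensitivity of the Frobenius norm to entry-wise absolute values ($\||G|\|_F = \|G\|_F$) give $\||G||h|\|_2 \leq \|G\|_F\|h\|_2$ and $\||B^T||x|\|_2 \leq \|B^T\|_F\|x\|_2$, producing the stated product $\|A^T\|_1\|G\|_F\|h\|_2\|B^T\|_F\|x\|_2$.

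I expect the Hadamard-product step to be the main obstacle, both because it is where the three error sources ($\beta^{(n)}$, $\gamma^{(n_h)}$, and the fresh multiplication error) must be combined into a single coefficient multiplying the common factor $(|G||h|)_i(|B^T||x|)_i$, and because the bookkeeping between first-order and $O(\varepsilon^2)$ terms is most delicate there---one must carefully justify dropping the $\Delta u_i \Delta v_i$ cross term and replacing $|\hat{w}|$ by $|w|$. The passage from the element-wise to the normwise statement is the other subtle point, since the appearance of the mixed norms ($\|\cdot\|_1$ outside, $\|\cdot\|_F$ and $\|\cdot\|_2$ inside) is driven entirely by the Cauchy--Schwarz step applied to the Hadamard product, which is easy to overlook.
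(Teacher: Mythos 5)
Your proposal is correct and follows essentially the same route as the paper's proof: stage-wise error bounds via Equation~\ref{eq:dot_prod} for the two forward transforms, first-order propagation through the Hadamard product (your terms $v_i\Delta u_i + u_i\Delta v_i + u_iv_i\delta_i$ are exactly the paper's $J_2\Delta a_2 + \Delta a_3$ with $J_2 = [\mathrm{Diag}(B^Tx), \mathrm{Diag}(Gh)]$), then the output transform, followed by the identical normwise passage using submultiplicativity, the Cauchy--Schwarz (Buniakowski--Schwartz) inequality on the Hadamard product, and the Frobenius-norm equivalence \eqref{eqn.Frob2NormBounds}. The only cosmetic difference is that the paper packages the propagation in Higham's Jacobian framework for composed functions, whereas you carry out the same first-order expansion by hand.
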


\begin{proof}
	Let $f(h,x)$ be the bilinear function computing Toom-Cook convolution \\ $f(h,x)$:${\mathbb{R}}^{n_h} \times {\mathbb{R}}^{n}\rightarrow \mathbb{R}^{n_o}$ such that $f(h,x)=A^T(Gh \odot B^Tx)$. 
	The computation consists of (a) kernel and input transformations, $f_1^{h}:{\mathbb{R}}^{n_h} \rightarrow {\mathbb{R}}^{n}$,  $f_1^{h}(h)=Gh$, and $f_1^{x}: {\mathbb{R}}^{n} \rightarrow {\mathbb{R}}^{n}$ $f_1^{x}(x)=B^Tx$; (b) Hadamard product: $f_2:{\mathbb{R}}^{n}\times {\mathbb{R}}^{n} \rightarrow {\mathbb{R}}^{n}$ $f_2(b,c)=b \odot c$; and \linebreak (c) postprocessing transformation $f_3:{\mathbb{R}}^{n} \rightarrow {\mathbb{R}}^{n_o}$ $f_3(a)=A^Ta$.
	
	We therefore need to find the error for the composition of these three computations, that is the error of $f(h,x)=f_3(f_2(f_1^{h}(h),f_1^{x}(x)))$. We follow Higham's method \cite{Higham02} for estimating the FP result of the composed function.
	
	Let $a_1=(h,x)$, and $a_{k+1}=f_k(a_k)$ that is the result of $k+1$th stage of
	the algorithm. So $a_2$ is the vector that includes preprocessing transforms of
	kernel and input \linebreak $f_1^{h}(h)=Gh$ and $f_1^{x}(x)=B^Tx$, $a_3$ is the Hadamard
	product of the two vectors $Gh$ and $B^Tx$ and is equal to $f_2(Gh, B^Tx)=Gh
	\odot B^Tx$ and $a_4$ is the postprocessing transform $f_3(Gh \odot
	B^Tx)=A^T(Gh \odot B^Tx)$. The computed values are denoted by
	$\hat{a}_{k+1}=f_k(a_k)+\Delta a_{k+1}$, so $\Delta a_{k+1}$ is the FP error of
	the $k$th stage of algorithm that we compute using formula \ref{eq:dot_prod}. \\
	Let vector $s=f_3(f_2(f_1^{h}(h),f_1^{x}(x)))$ be a real result and $\hat{s}$
	be the computed solution. \linebreak $J_k$ is the Jacobian matrix of $f_k$.
	The computed result $\hat{s}$ is equal to the formula \cite{Higham02}:
	$$\hat{s}=f_3(f_2(f_1^{h}(h),f_1^{x}(x)))+J_3J_2\Delta a_2+J_3\Delta a_3 + \Delta a_4$$
	Where:
	\begin{align*}\nonumber 
	J_3=A^T,  & \hspace{10pt}  J_2=\left[Diag(B^Tx),Diag(Gh)\right] \\[5pt]
	|\Delta a_2| &\leq \left[
	\begin{array}{l}
	|G||h|\gamma^{(n_h)} \varepsilon +O(\varepsilon^2)\\
	|B^T||x|\beta^{(n)} \varepsilon + O(\varepsilon^2)
	\end{array}
	\right] \\[5pt]
	|\Delta a_3| \leq \left(|G||h| \odot |B^T||x|\right)\varepsilon+O(\varepsilon^2), &\hspace{10pt} |\Delta a_4| \leq |A^T|\left(|G||h| \odot |B^T||x|\right)\alpha^{(n)} \varepsilon +O(\varepsilon^2) 
	\end{align*}
	The componentwise error is the absolute difference between real and computed solutions \cite{Higham02} \cite{Wilkinson94}
	\begin{align*}\nonumber 
	|\hat{s}-s| &= \\
	=|f_3\left(f_2\left(f_1^{h}\left(h\right),f_1^{x}\left(x\right)\right)\right)+J_3J_2\Delta a_2 & + J_3 \Delta a_3 + \Delta a_4-f_3\left(f_2\left(f_1^{h}\left(h\right),f_1^{x}\left(x\right)\right)\right)|=\\[5pt]
	=|J_3J_2 \Delta a_2 + J_3 \Delta a_3+\Delta a_4| &\leq |J_3||J_2||\Delta a_2|+|J_3||\Delta a_3|+|\Delta a_4| \leq\\[5pt]
	\leq \left(|A^T||Diag\left(Gh\right)|,|A^T|\right.|Di&ag\left.\left(B^Tx\right)|\right)|\Delta a_2|+|A^T||\Delta a_3|+|\Delta a_4| \leq\\[5pt]
	\leq \left(|A^T||Diag\left(B^Tx\right)|\right., |A^T||&\left. Diag\left(Gh\right)|\right) \left[
	\begin{array}{l}
	|G||h|\gamma^{(n_h)} \varepsilon + O\left(\varepsilon^2\right)\\
	|B^T||x|\beta^{(n)} \varepsilon + O\left(\varepsilon^2\right)
	\end{array} 
	\right] + \\
	+ |A^T|\left(|G||h| \odot |B^T||x|\right)\varepsilon + O(&\varepsilon^2) +
	+|A^T|\left(|G||h| \odot |B^T||x|\right)\alpha^{(n)} \varepsilon + O\left(\varepsilon^2\right) =\\[5pt]
	=|A^T|\left(|G||h| \odot |B^T||x|\right)(\gamma&^{(n_h)}+\beta^{(n)})\varepsilon+|A^T|\left(|G||h|\odot|B^T||x|\right)\varepsilon + \\
	+ |A^T|\left(|G||h|\right. &\odot \left.|B^T||x|\right)\gamma^{(n_h)} \varepsilon + O\left(\varepsilon^2\right)=\\[5pt] 
	=|A^T|\left(|G||h| \odot |B^T||x|\right)&\left(\alpha^{(n)}+\beta^{(n)}+\gamma^{(n_h)}+1\right)\varepsilon + O\left(\varepsilon^2\right)
	\end{align*}
	For the normwise error estimation we use induced norm $\|\cdot\|_1$ and Frobenius norm $\|\cdot\|_F$, hence
	$$\|\hat{s}-s\|_1 \leq  $$
	$$  \leq \|A^T\left(Gh \odot B^Tx\right)\left(\alpha^{(n)}+\beta^{(n)}+\gamma^{(n_h)}+1\right)\varepsilon+O\left(\varepsilon^2\right)\|_1 \leq $$
	$$  \leq \|A^T\left(Gh \odot B^Tx\right)\|_1\left(\alpha^{(n)}+\beta^{(n)}+\gamma^{n_h}+1\right)\varepsilon +O(\varepsilon^2) \leq $$
	$$  \leq \|A^T\|_1\|Gh \odot B^Tx\|_1\left(\alpha^{(n)}+\beta^{(n)}+\gamma^{(n_h)}+1\right)\varepsilon +O(\varepsilon^2)$$
	Applying the Buniakowski-Schwartz inequality to componentwise multiplication
	yields
	$$\|\hat{s}-s\|_1 \leq \|A^T\|_1\thinspace\|Gh\|_2\thinspace\|B^Tx\|_2\left(\alpha^{(n)}+\beta^{(n)}+\gamma^{(n_h)}+1\right)\varepsilon +O(\varepsilon^2)$$
	Finally from norm equivalence \eqref{eqn.Frob2NormBounds} we have
	$$\|\hat{s}-s\|_1 \leq \|A^T\|_1\thinspace\|G\|_F\thinspace\|h\|_2\thinspace\|B^T\|_F\thinspace\|x\|_2\left(\alpha^{(n)}+\beta^{(n)}+\gamma^{(n_h)}+1\right)\varepsilon +O(\varepsilon^2)$$
\end{proof}

As $\|\cdot\|_1 \leq n\|\cdot\|_2$ we have
\begin{equation}
\label{eq:normE-1D}
\|\hat{s}-s\|_1 \leq n\|A^T\|_F\|G\|_F\|h\|_2\|B^T\|_F\|x\|_2\left(\alpha^{(n)}+\beta^{(n)}+\gamma^{(n_h)}\right)\varepsilon +O(\varepsilon^2)
\end{equation}

\begin{corollary}
	For linear summation in the dot product and for any elements in matrices $A^T$,
	$G$ and $B^T$ the componentwise boundary is equal to:
	$$|\hat{s} -s| \leq |A^T|\left(|G||h| \odot |B^T||x|\right)(n_h+2n+4)\varepsilon +O(\varepsilon^2)$$
	and the normwise boundary is equal to:
	$$\|\hat{s} - s\|_1 \leq \|A^T\|_1\thinspace\|G\|_F\thinspace\|h\|_2\thinspace\|B^T\|_F\thinspace\|x\|_2\left(n_h+2n+4\right)\varepsilon +O(\varepsilon^2)$$
	Where $n_h$ is the kernel size and $n$ is the input size of the convolution
\end{corollary}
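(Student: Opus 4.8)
The plan is to obtain the Corollary as a direct specialization of Theorem~\ref{theorem:TC-1D-error}: the generic bounds \eqref{eq:TC-1D-element} and \eqref{eq:TC-1D-norm} already carry the factor $\left(\alpha^{(n)}+\beta^{(n)}+\gamma^{(n_h)}+1\right)$, so all that remains is to substitute the values of these three summation constants that correspond to linear summation with arbitrary (not exactly representable) matrix entries. No new estimate is needed; the work of the proof was already done in the theorem.

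First I would recall, from the three-case discussion following Equation~\eqref{eq:dot_prod}, that under linear summation the dot-product constant for a combination of vectors of length $m$ with general coefficients is $m+1$. Next I would match each constant to the correct dot-product length by reading off the shapes of the transform matrices: $A^T$ is $n_o\times n$ and therefore forms length-$n$ dot products, $B^T$ is $n\times n$ and likewise forms length-$n$ dot products, while $G$ is $n\times n_h$ and forms length-$n_h$ dot products. Hence $\alpha^{(n)}=n+1$, $\beta^{(n)}=n+1$, and $\gamma^{(n_h)}=n_h+1$.

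Substituting these into the bracketed factor gives $\alpha^{(n)}+\beta^{(n)}+\gamma^{(n_h)}+1=(n+1)+(n+1)+(n_h+1)+1=2n+n_h+4$, and inserting this single number back into \eqref{eq:TC-1D-element} and \eqref{eq:TC-1D-norm} yields the stated componentwise and normwise bounds respectively. There is no genuine analytical obstacle here; the only point requiring care is the bookkeeping that pairs each of $\alpha$, $\beta$, $\gamma$ with the correct vector length ($n$, $n$, and $n_h$) and selects the ``general element'' case $m+1$ rather than the sharper $m$ or $m-1$ that would apply were the entries exactly representable or integer powers of two.
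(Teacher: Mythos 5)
Your proposal is correct and is exactly the argument the paper intends: the corollary is an immediate specialization of Theorem~\ref{theorem:TC-1D-error}, obtained by substituting the linear-summation constants $\alpha^{(n)}=\beta^{(n)}=n+1$ and $\gamma^{(n_h)}=n_h+1$ (the ``general element'' case from the discussion after Equation~\eqref{eq:dot_prod}) into the factor $\alpha^{(n)}+\beta^{(n)}+\gamma^{(n_h)}+1$, giving $n_h+2n+4$. Your bookkeeping of the dot-product lengths ($n$ for $A^T$, $n$ for $B^T$, $n_h$ for $G$) matches the matrix shapes in the paper, so nothing is missing.
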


\subsection{Two Dimensions}
\label{sec:2d-analysis}
Two-dimensional convolution can be implemented by nesting 1D convolutions \cite{Lavin16}. This nesting approach requires additional pre-/post-processing linear transforms.
For two-dimensional Toom-Cook convolution the analogous theorem is formulated as follows:
\begin{theorem}
	\label{theorem:TC-2D-error}
	Error for two-dimensional Toom-Cook convolution computation satisfies the componentwise bound equal to:
	\begin{equation}
	\label{eq:TC-2D-element}
	|\hat{S}-S| \leq |A^T|\left(|G||H||G^T| \odot |B^T||X||B|\right)|A|\left(2\alpha^{(n)}+2\beta^{(n)}+2\gamma^{(n_h)}+1\right)\varepsilon+\O(\varepsilon^2) 
	\end{equation}
	Error for two-dimensional Toom-Cook convolution computation satisfies the normwise bound equal to:
	\begin{equation}
	\label{eq:TC-2D-norm}
	\begin{split}
	\|\hat{S} - S\|_1 &\leq \\ 
	\leq \|A^T\|_1\thinspace\|G\|_F\thinspace\|H\|_F\thinspace\|G^T\|_F\thinspace\|B^T\|_F\thinspace\|X\|_F\thinspace\|B\|_F\thinspace&\|A\|_1\thinspace R \thinspace \varepsilon+O(\varepsilon^2) \\
	\mathrm{where:} \;\; R = 2\alpha^{(n)}+2\beta^{(n)}+2\gamma^{(n_h)}+1
	\end{split}
	\end{equation}
	We assume identical method of summation for matrix and transpose matrix
	multiplication, where $\alpha^{(n)}$, $\beta^{(n)}$, $\gamma^{(n_h)}$ represent
	errors from multiplication by matrices $A^T$, $B^T$ and $G$ respectively.
\end{theorem}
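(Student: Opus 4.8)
The plan is to mirror the structure of the 1D proof: express the 2D operation $F(H,X)=A^T(GHG^T \odot B^TXB)A$ as a composition of elementary stages, propagate the local rounding error of each stage through the Jacobians of the subsequent stages, and finally pass from the resulting componentwise bound to the normwise bound. The only genuinely new feature relative to the 1D case is that each linear transform is now \emph{two-sided}: the kernel transform $GHG^T$ is realised as two successive matrix products ($GH$ followed by right-multiplication by $G^T$), and likewise $B^TXB$ and the output transform $A^T(\cdot)A$. This is precisely what produces the factors of two in $R=2\alpha^{(n)}+2\beta^{(n)}+2\gamma^{(n_h)}+1$.

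Concretely, I would decompose $F$ into the stages $U_1=GH$, $U_2=U_1G^T$, $V_1=B^TX$, $V_2=V_1B$, $W=U_2\odot V_2$, $Z_1=A^TW$, and $Z_2=Z_1A=S$. Each matrix product is a collection of dot products, so by Equation~\eqref{eq:dot_prod} the local error of the left-multiplication $GH$ is bounded componentwise by $|G||H|\gamma^{(n_h)}\varepsilon+O(\varepsilon^2)$, the right-multiplication by $G^T$ contributes a second $\gamma^{(n_h)}$, and analogously for the $\beta^{(n)}$ stages ($B^T$ and $B$) and the $\alpha^{(n)}$ stages ($A^T$ and $A$); the single element-wise Hadamard stage contributes the $+1$. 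Since every stage after the first is either multiplication by a constant matrix (whose Jacobian is that matrix, acting on the appropriate side) or the bilinear Hadamard product (whose Jacobians are the diagonal matrices built from the other operand, exactly as $J_2$ in the 1D proof), the first-order expansion $\hat S=F(H,X)+\sum_k(\text{Jacobians of later stages})\,\Delta a_k$ holds with all cross terms absorbed into $O(\varepsilon^2)$. Taking absolute values, using $|MN|\le|M||N|$ to push each Jacobian onto the corresponding absolute-value matrix, and collecting the seven constants gives the componentwise bound~\eqref{eq:TC-2D-element}.

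For the normwise bound I would take $\|\cdot\|_1$ of~\eqref{eq:TC-2D-element} and peel off the outer factors by submultiplicativity of the induced $1$-norm, using $\||A^T|\|_1=\|A^T\|_1$ and $\||A|\|_1=\|A\|_1$, which leaves $\big\||G||H||G^T|\odot|B^T||X||B|\big\|_1$. The key observation is that for entrywise-nonnegative $P,Q$ the $j$th column sum of $P\odot Q$ satisfies $\sum_i P_{ij}Q_{ij}\le\left(\sum_i P_{ij}^2\right)^{1/2}\left(\sum_i Q_{ij}^2\right)^{1/2}\le\|P\|_F\|Q\|_F$ by the Buniakowski-Schwartz inequality, so that $\|P\odot Q\|_1\le\|P\|_F\|Q\|_F$; this is the 2D analogue of the Buniakowski-Schwartz step used in the 1D proof. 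Applying it with $P=|G||H||G^T|$ and $Q=|B^T||X||B|$, then using $\|M\|_F=\||M|\|_F$ together with Frobenius submultiplicativity $\||G||H||G^T|\|_F\le\|G\|_F\|H\|_F\|G^T\|_F$ (and the analogous bound for $Q$), yields~\eqref{eq:TC-2D-norm}.

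The main obstacle I anticipate is the bookkeeping in the componentwise step: I must verify that the two-sided products compound correctly, so that each of the six matrix multiplications contributes \emph{exactly} one error constant and that the genuinely second-order interactions (for instance, the rounding error of $GH$ being further amplified by the rounding incurred when multiplying by $G^T$) are legitimately relegated to $O(\varepsilon^2)$. Getting the Jacobian ordering right for a right-multiplication stage, where the perturbation must enter on the correct side, is the delicate point; once that is pinned down, the norm reduction is routine given the matrix Buniakowski-Schwartz inequality above.
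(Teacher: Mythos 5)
Your proposal is correct: the componentwise accounting yields exactly $R=2\alpha^{(n)}+2\beta^{(n)}+2\gamma^{(n_h)}+1$, and your norm reduction (induced $1$-norm submultiplicativity, the inequality $\|P\odot Q\|_1\leq\|P\|_F\|Q\|_F$ via Buniakowski--Schwartz on columns, then Frobenius submultiplicativity) matches the paper's final steps. The difference lies in how the two-dimensional structure is handled. You stay in matrix form and split each two-sided transform into two one-sided stages (seven stages in all: $GH$, $(GH)G^T$, $B^TX$, $(B^TX)B$, Hadamard, $A^T(\cdot)$, $(\cdot)A$), so the factors of two \emph{emerge} from the stage-by-stage bookkeeping, with the delicate point being the side on which each right-multiplication Jacobian acts. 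The paper instead vectorizes everything using $\mathrm{vec}(MXM^T)=(M\otimes M)\,\mathrm{vec}(X)$, which collapses the computation back into the same three-stage shape as the 1D proof (transform, Hadamard, transform) with $G\otimes G$, $B^T\otimes B^T$, $A^T\otimes A^T$ in place of $G$, $B^T$, $A^T$; the doubled constants are then fed in as per-stage facts, e.g.\ $|fl(fl(A)Xfl(A^T))|\leq|A|\,|X|\,|A^T|\,2\alpha^{(n)}\varepsilon+O(\varepsilon^2)$, reflecting that the actual computation performs two successive multiplications even though the Kronecker form is mathematically equivalent. What the paper's route buys is direct reuse of the 1D Jacobian machinery at the price of Kronecker notation; what your route buys is avoiding Kronecker products entirely and making the origin of each factor of two explicit, at the price of more stages and the side-sensitive Jacobian bookkeeping you correctly identify as the main hazard. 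Your explicit proof of the matrix Hadamard inequality $\|P\odot Q\|_1\leq\|P\|_F\|Q\|_F$ is also a small improvement in rigor over the paper, which invokes it with only the remark that Buniakowski--Schwartz holds for matrices as well as vectors.
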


The proof of this theorem is presented in Appendix \ref{sec:2dim}

Notice that the Euclidean norm of any matrix $M$ is equal to the Euclidean norm of matrix $M^T$, so we can formulate the normwise boundaries
$$\|\hat{S}-S\|_1 \leq \|A^T\|_1\thinspace\|A\|_1\thinspace{\|G\|^2}_F\thinspace\|H\|_F\thinspace{\|B^T\|^2}_F\thinspace\|X\|_F\left(2\alpha^{(n)}+2\beta^{(n)}+2\gamma^{(n_h)}+1\right)\varepsilon+O(\varepsilon^2)$$

Notice that we can bound $\|A^T\|_1\|A\|_1 \leq n^2\|A^T\|_F\|A\|_F = n^2{\|A^T\|^2}_F$ \cite{Higham02}. Then we have the error bound estimation for two-dimensional Toom-Cook algorithm equal to $$\|\hat{S} -S\|_1 \leq n^2{\|A^T\|^2}_F\thinspace{\|G\|^2}_F\thinspace{\|B^T\|^2}_F\thinspace\|H\|_F\thinspace \|X\|_F\left(2\alpha^{(n)}+2\beta^{(n)}+2\gamma^{(n_h)}+1\right)\varepsilon+O(\varepsilon^2)$$
Comparing it to the one-dimensional Toom-Cook convolution \ref{eq:TC-1D-element}
we can observe that the error boundary for $2$ dimensions is approximately the square of the error of the $1D$ algorithm.

\begin{corollary}
	For linear summation in the dot product and for any elements in matrices $A^T$, $G$ and $B^T$ the componentwise boundary for two-dimensional Toom-Cook convolution is equal to:
	$$|\hat{S}-S| \leq |A^T|\left(|G||H||G^T| \odot |B^T||X||B|\right)|A|\left(2n_h+4n+7\right)\varepsilon+\O(\varepsilon^2)$$
	and the normwise boundary is equal to
	$$\|\hat{S}-S\|_1 \leq \|A^T\|_1\thinspace\|G\|_F\thinspace\|H\|_F\thinspace\|G^T\|_F\thinspace\|B^T\|_F\thinspace\|X\|_F\thinspace\|B\|_F\thinspace\|A\|_1\left(2n_h+4n+7\right)\varepsilon+O(\varepsilon^2)$$
\end{corollary}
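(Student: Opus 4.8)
The plan is to obtain this corollary as a direct specialization of Theorem~\ref{theorem:TC-2D-error}, in which the generic dot-product constants $\alpha^{(n)}$, $\beta^{(n)}$, and $\gamma^{(n_h)}$ are replaced by their concrete values under the assumption of linear summation with coefficients that are not exactly representable in $F$. No new estimation is required: the entire task reduces to substituting the appropriate constants into the single scalar factor that already appears in both bounds of the theorem, and then simplifying.

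First I would recall, from the discussion following Equation~\ref{eq:dot_prod}, that for linear summation the dot-product error constant for a length-$m$ dot product with arbitrary (non-exactly-representable) entries equals $m+1$. In the nested two-dimensional algorithm each of the two $G$-type transforms acts through dot products of length $n_h$, while each of the two $B^T$-type transforms and each of the two $A^T$-type transforms acts through dot products of length $n$. Hence the relevant values are $\gamma^{(n_h)}=n_h+1$, $\beta^{(n)}=n+1$, and $\alpha^{(n)}=n+1$.

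Next I would substitute these three values into the common scalar factor $R=2\alpha^{(n)}+2\beta^{(n)}+2\gamma^{(n_h)}+1$ that multiplies both the componentwise bound \eqref{eq:TC-2D-element} and the normwise bound \eqref{eq:TC-2D-norm}. A short arithmetic simplification gives
$$R = 2(n+1)+2(n+1)+2(n_h+1)+1 = 4n+2n_h+7 = 2n_h+4n+7,$$
and inserting this value of $R$ into each of the two inequalities yields exactly the stated componentwise and normwise bounds, with all of the matrix-norm structure carried over unchanged from Theorem~\ref{theorem:TC-2D-error}. Because $R$ is a single common factor, the two bounds are obtained simultaneously from the one substitution.

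The only point requiring care — and the closest thing to an obstacle — is the bookkeeping of the three dot-product lengths together with the choice of error regime: one must confirm that the ``any elements'' case is the correct one (contributing $m+1$ per length-$m$ dot product, rather than $m$ or $m-1$), since the matrices $G$, $A^T$, and $B^T$ produced by the Toom-Cook construction have entries such as $p_i^{\,j-1}N_i$ and $M_{j,i}$ that are in general neither exactly representable in $F$ nor integer powers of two. Once this identification is fixed, the corollary follows with no further analysis.
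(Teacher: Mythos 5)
Your proposal is correct and follows essentially the same route as the paper, which obtains this corollary by substituting the linear-summation values $\alpha^{(n)}=\beta^{(n)}=n+1$ and $\gamma^{(n_h)}=n_h+1$ (from the discussion after Equation~\ref{eq:dot_prod}) into the factor $R=2\alpha^{(n)}+2\beta^{(n)}+2\gamma^{(n_h)}+1$ of Theorem~\ref{theorem:TC-2D-error}, giving $2n_h+4n+7$ in both the componentwise and normwise bounds. Your added care about confirming the ``any elements'' regime is a sensible check and consistent with the paper's intent.
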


\subsection{Components of the Toom-Cook error}
\label{sec:components}

The Toom-Cook error in Theorem \ref{theorem:TC-2D-error} states that
the bound is proportional to the product of three main terms: (a) the
product of the norms of the three convolution matrices $G$, $B^T$ and
$A^T$; (b) the product of the norms of the input $x$ and kernel $h$;
and (c) the sum of the errors from the linear transforms $\alpha^{(n)}$,
$\beta^{(n)}$ and $\gamma^{(n_h)}$.

The input $x$ and kernel $h$ can take on any value at execution time,
so their norms can be arbitrarily large if the input and kernel have
pathological values. Thus, the worst-case error arising from the
product of these norms can be arbitrarily large. However, most
inputs and kernels are unlikely to have pathological values. 
Furthermore, it is often more informative to study the relative error/stability of an algorithm,
i.e., the size of the error produced by the algorithm relative to the size of its inputs.  Interpreting the
error bounds derived in Theorems \ref{theorem:TC-1D-error} and \ref{theorem:TC-2D-error}, we see that
the relative errors are controlled by norms of the Vandermonde matrices and the summation order.
Furthermore, the errors arising from the linear transforms are polynomial, as shown
in Equation~\ref{eq:dot_prod}.  However, it should be noted that the relative condition number may depend on $x$ and $h$; see Appendix \ref{section.AppNormCond}.

The three matrices $G$, $A^T$ and $B^T$ are more problematic. As we
describe in more detail in Section \ref{sec:Toom-Cook}, $G$ and $A$
are (theoretically square although normally presented as rectangular)
Vandermonde matrices and $B^T$ is the inverse of (the square version
of) $A^T$.  The product of the norms of a square Vandermonde matrix
and its inverse grows at least exponentially with $n$
\cite{Pan16}. Thus, our bound on the error grows at least
exponentially with $n$.

The third component of the Toom-Cook algorithm error depends on
the values of $\alpha^{(n)}$, $\beta^{(n)}$, $\gamma^{(n_h)}$, which means that
it depends on the method of evaluation of the matrix-vector multiplication.

\subsection{Multiple Channels}

Note that DNN convolution is also normally computed across multiple
input channels. Both their input and kernel have the same number of
channels, and separate 1D or 2D convolutions are computed for each
channel. The resulting vectors or matrices (for 1D or 2D convolution
respectively) are summed pointwise to yield a single-channel
result vector or matrix. The separate convolutions for each channel
can be computed using Toom-Cook or indeed any convolution algorithm.

Toom-Cook convolution consists of three stages: pre-processing,
pairwise multiplication (Hadamard product), and post-processing. Lavin and Gray's DNN
convolution algorithm dramatically reduces the work of post-processing
for multi-channel convolution. The post-processing step is a linear
transform, so the sum of the transformed Hadamard products is equal to
the transform of the sum of the Hadamard products. Thus the post-processing
transform is applied just once after summing the Hadamard products,
rather than separately for each input channel \emph{before}
summation.

If we compute Toom-Cook convolution over $C$ channels we add the
results of Hadamard products $\sum_{c=1}^C(Gh_c \odot B^Tx_c)$ for
one-dimensional convolution and $\sum_{c=1}^C(GH_cG^T \odot B^TX_cB)$
for two-dimensional convolution, using the same matrices $G$ and $B^T$
on every channel. Thus we have the error less than or equal to:
\vspace{-0.3cm}

\begin{equation}
\label{eq:channel_error_1D}
\begin{split} 
\|\hat{s}-s\|_1 
&\leq \|A^T\|_1\thinspace C\thinspace\|G\|_F\thinspace max_c\thinspace\|h_c\|_2\thinspace\|B^T|_F\thinspace max_c\thinspace\|x_c\|_2R\varepsilon+O(\varepsilon^2) \nonumber
\end{split}
\end{equation}
Where $R=\alpha^{(n)}+\beta^{(n)}+\gamma^{(n_h)}+1+\lambda^{(C)}$,
$h_c$ and $x_c$ are the kernel and input vectors on channel $c$,
$\alpha^{(n)}$, $\beta^{(n)}$, $\gamma^{(n_h)}$ represent the dot product
errors and $\lambda^{(C)}$ is the error in pointwise summation.

\noindent For two dimensions we have:
\begin{equation}
\label{eq:channel_error_2D}
\begin{split}
\|\hat{S}-S\| 
&\leq \left(\|A^T\|_1\thinspace C\thinspace\|G\|_F \thinspace max_c\thinspace\|h_c\|_2\thinspace\|G^T\|_F\thinspace\|B^T\|_F \thinspace max_c\thinspace\|x_c\|_2\thinspace\|B\|_F\thinspace\|A\|_1\right)\thinspace R \thinspace \varepsilon + O(\varepsilon^2) \nonumber
\end{split}
\end{equation}
Where $R = 2\alpha^{(n)}+2\beta^{(n)}+2\gamma^{(n_h)}+1+\lambda^{(C)}$

\section{Modified Toom-Cook Algorithm}
\label{sec:modified}
A common method to reduce the number of terms in the linear transforms of Toom-Cook convolution is to use the so-called \textit{modified} version of the algorithm \footnote{See tensorflow source code at https://github.com/tensorflow/tensorflow/blob/9590c4c32dd4346ea5c356733\linebreak 36f5912c6072bf2/tensorflow/core/kernels/winograd\_transform.h\#L179-L186  \linebreak and MKL-DNN https://github.com/intel/mkl-dnn/blob/fa5f6313d6b65e8f6444c6900432fb07ef5661e5/doc/\linebreak winograd\_convolution.md}. In this section, we show that as well as reducing the number of FP operations in the linear transforms, the modified algorithm also significantly reduces the FP error in Toom-Cook convolution.

The main idea of the modified algorithm is to solve one size smaller
problem which means we use a kernel of the same size $n_h$ but an input of size $n-1$ instead of $n$.
Having computed such a convolution, we then \emph{modify} the output values in which the $n$th element of the input is included.

To minimize the number of operations in the Toom-Cook algorithm we construct
the polynomial $M(a)=\prod_i m_i(a)$ such that $deg(M(a)) = deg(s(a))+1$. We can further reduce the number of operation by using $M^{'}(a)$ where $deg(M'(a))=deg(M(a))-1=deg(s(a))$. \\
Then when we apply CRT instead of polynomial $s(a)=h(a)x(a)$ we obtain the polynomial
\begin{equation}
\label{eq:mod_win_res}
s'(a) = s(a) \; mod \; M'(a)
\end{equation}
Because all $m_i$ we use are monic, $M'(a)$ is also monic. We have $s(a) =
s'(a) + R \; M'(x)$, where the scalar $R$ is the coefficient of the variable
with highest degree in \linebreak $s(a)=h(a)x(a)$ i.e. $R = h_{n_h}x_n$. Finally we have:
\begin{equation}
\label{eq:mod_win}
s(a) = s'(a)+h_{n_h}x_nM'(a)
\end{equation}
where $s'(a)$ is a solution for convolution with one fewer inputs. \\
With this approach we need only $n-1$ points to construct polynomial $M'(a)$
instead of $n$ used in Section \ref{sec:Toom-Cook}. Formally, we use $M(a)=M'(a)(a- \infty)$ \cite{Blahut10}.  \\

Let us denote the matrices constructed by Toom-Cook algorithm for input $n$ as $G^{(n)}$, $B^{(n)T}$ and $A^{(n)T}$ and for modified Toom-Cook algorithm with input $n$ as $G^{m(n)}$, $B^{m(n)T}$ and $A^{m(n)T}.$

The modified Toom-Cook algorithm for input $n$ proceeds as follows:
\begin{itemize}
	\item Construct matrices $A^{(n-1)T}$, $G^{(n-1)}$ and $B^{(n-1)T}$ as for Toom-Cook for the problem of size $n-1$ with polynomial $M'(a)$.
	\item Construct matrix $G^{m(n)}$ by adding the $n$th row to the matrix
	$G^{(n-1)}$. This row includes zeros and a $1$ at the last position. Then $G^{m(n)}h = G^{(n-1)}h+h_{n_h}$.
	\item Construct matrix $A^{m(n)}$ in the same way by adding the $n$th row to
	the matrix $A^{(n-1)}$. This row includes zeros and a $1$ at the last position. Then  $A^{m(n)}x = A^{(n-1)}x + x_n$.
	\item Construct matrix $B^{m(n)}$ by adding the $n$th row and $n$th column to
	the matrix $B^{(n-1)}$. The last row includes zeros and a $1$ at the last
	position. The last column includes consecutive coefficients of polynomial $M'(a)$. Then $B^{m(n)}(G^{m(n)}h \odot A^{m(n)}x) =
	B^{(n-1)}(G^{(n-1)}h \odot A^{(n-1)}x)+h_{n_h}x_nM'(a)$
	\item Apply the Matrix Exchange theorem
\end{itemize}

The general form of the matrices obtained by modified Toom-Cook algorithm is as follows:

$$
{\scriptstyle
	G^{m(n)} = \left[
	\begin{matrix}
	\bf{G^{(n-1)}} \\
	\begin{matrix}
	0 & ... & 0 & 1
	\end{matrix}
	\end{matrix}
	\right]
	\qquad
	A^{m(n)T} =\left[
	\begin{matrix}
	\bf{A^{(n-1)T}} &
	\begin{matrix}
	0 \\
	... \\
	0 \\
	1
	\end{matrix}
	\end{matrix}
	\right]
	\qquad
	B^{m(n)T}=\left[
	\begin{matrix}
	\bf{B^{(n-1)T}} & \bf{\large 0} \\
	\begin{matrix}
	M_1(a) & ... & M_{n}
	\end{matrix}
	& 1
	\end{matrix}
	\right]}
$$

\subsection{Modified Toom-Cook Error Analysis}

Our Theorems \ref{theorem:TC-1D-error} and \ref{theorem:TC-2D-error} about error estimation apply both to Toom-Cook and modified Toom-Cook algorithms. However, we can distinguish error bounds for Toom-Cook and modified Toom-Cook. In this section, we present a FP error analysis for the modified version of Toom-Cook and show that it gives us tighter error bounds than Toom-Cook. As before, our error analysis is novel, but we rely on prior methods and results from Higham \cite{Higham02}, Demmel \cite{Demmel07}, Pan \cite{Pan16} and the work of Bini and Lotti \cite{Bini80} on the error for fast matrix multiplication. The presented bounds allow us to see the exact difference in FP error for both algorithms.

For modified Toom-Cook algorithm we have some zero elements in matrices that are independent of the parameters (points) we choose. The guaranteed properties of the modified Toom-Cook algorithm is that we have $n_h-1$ zeros elements and a single $1$ in the last row of $G$ matrix, $n-1$ zeros elements and a single $1$ in the last column of $B^T$ matrix and $n_o-1$ zeros elements and $1$ in the last column of $A^T$ matrix. In addition, we can observe that Toom-Cook matrices for input $n-1$ are submatrices for modified Toom-Cook for input $n$.

Let us denote the real result vector of modified Toom-Cook algorithm for input $n$ as $s^{m(n)}$ and convolution vector computing by modified Toom-Cook algorithm for input $n$ by $\hat{s}^{m(n)}$, similarly denote the real result vector of Toom-Cook algorithm for input $n$ by $s^{(n)}$ and computed result by ${\hat{s}}^{(n)}$. We put the vector $x^{(n-1)}$ as the vector of size $n-1$ where $x_i^{(n-1)}=x_i^{(n)}$ for $i=1...n-1$

\begin{theorem}
	The componentwise error for one-dimensional modified Toom-Cook for $q$th element of output is bounded by:
	\begin{equation}
	\label{eq:ModWinograd-1D-element}
	\begin{split}
	&|\hat{s}^{m(n)}_q-s^{m(n)}_q| = \\
	& |{A^{(n-1)T}}_{q:}|\left(|G^{(n-1)}|
	|h|\odot|B^{(n-1)T}||x^{(n-1)}|\right)\left(\gamma^{(n_h)}+\beta^{(n-1)}+\alpha^{(n-1)}+1\right)\varepsilon + O(\varepsilon^2)  \\
	&\mathrm{for} \; q=1,..,n_o-1 
	\end{split}
	\end{equation}
	\begin{equation}
	\begin{split}  
	|{\hat{s}_{n_o}}^{(m(n))}-s^{m(n)}_{n_o}| \leq& \\ |{A^T_{q:}}^{(n-1)}|\left(|G^{(n-1)}\right.&\left.||h|\odot|B^{(n-1)T}||x^{(n-1)}|+|h_{n_h}||{B_{n:}}^{m(n)T}||x|\right) \\
	&\left(max\left\{\left(\gamma^{(n_h)}+\beta^{(n-1)}+\alpha^{(n-1)}+1\right), \left(\beta^{(n)}+1\right)\right\}+1\right)\varepsilon + O(\varepsilon^2)\\
	\mathrm{for} \; q=n_o\;\;\;\;\;\;\;\;\;\;\;\;\; &
	\end{split}
	\end{equation}
\end{theorem}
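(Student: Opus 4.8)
The plan is to exploit the block structure of the modified matrices $G^{m(n)}$, $A^{m(n)T}$ and $B^{m(n)T}$, which embed the size-$(n-1)$ Toom-Cook matrices as submatrices, and thereby reduce most of the analysis to the already-proved Theorem~\ref{theorem:TC-1D-error}. First I would write the computed output componentwise. Reading off the displayed block forms, $G^{m(n)}h$ is the stacking of the length-$(n-1)$ vector $G^{(n-1)}h$ with the single entry $h_{n_h}$, and $B^{m(n)T}x$ is the stacking of $B^{(n-1)T}x^{(n-1)}$ with the scalar $B^{m(n)T}_{n:}x$; hence their Hadamard product stacks $G^{(n-1)}h \odot B^{(n-1)T}x^{(n-1)}$ with the single term $h_{n_h}\,(B^{m(n)T}_{n:}x)$. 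Because the appended column of $A^{m(n)T}$ has its only nonzero entry (a $1$) in row $n_o$, multiplication by $A^{m(n)T}$ shows that for $q=1,\dots,n_o-1$ the $q$th output equals exactly $A^{(n-1)T}_{q:}\bigl(G^{(n-1)}h \odot B^{(n-1)T}x^{(n-1)}\bigr)$, while the $n_o$th output acquires the extra summand $h_{n_h}\,(B^{m(n)T}_{n:}x)$ arising from the modification $h_{n_h}x_n M'(a)$ of Equation~\eqref{eq:mod_win}.

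For the first block of indices $q=1,\dots,n_o-1$ the computation is operation-for-operation the same matrix-vector and Hadamard computation analysed in Theorem~\ref{theorem:TC-1D-error}, but carried out with the reduced matrices $A^{(n-1)T}$, $G^{(n-1)}$, $B^{(n-1)T}$ and the truncated input $x^{(n-1)}$ (the appended zero entry of $A^{m(n)T}$ contributes nothing). I would therefore simply invoke the componentwise bound~\eqref{eq:TC-1D-element} for that reduced problem. The only change is in the dot-product lengths: the transforms $A^{(n-1)T}$ and $B^{(n-1)T}$ now have rows of length $n-1$, so their error constants become $\alpha^{(n-1)}$ and $\beta^{(n-1)}$, while $G^{(n-1)}$ still has rows of length $n_h$ and keeps $\gamma^{(n_h)}$; the trailing $+1$ is the Hadamard rounding, exactly as before. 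This reproduces Equation~\eqref{eq:ModWinograd-1D-element} term-for-term, which is why the statement is written as an equality of bounds.

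The case $q=n_o$ is the substantive one. Here the output is a floating-point sum $fl(\hat u + \hat v)$ of two separately computed quantities: $u = A^{(n-1)T}_{n_o:}\bigl(G^{(n-1)}h \odot B^{(n-1)T}x^{(n-1)}\bigr)$, whose accumulated relative error is bounded by $(\gamma^{(n_h)}+\beta^{(n-1)}+\alpha^{(n-1)}+1)\varepsilon$ by the reduced-problem argument applied to row $n_o$; and the modification term $v = h_{n_h}\,(B^{m(n)T}_{n:}x)$, formed from a length-$n$ dot product (constant $\beta^{(n)}$) followed by one multiplication by $h_{n_h}$ (a further $+1$), giving accumulated error $(\beta^{(n)}+1)\varepsilon$. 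Following Higham's composition method as in the proof of Theorem~\ref{theorem:TC-1D-error}, I would set $\hat u = u+\Delta u$, $\hat v = v+\Delta v$ and $fl(\hat u+\hat v)=(\hat u+\hat v)(1+\delta)$ with $|\delta|\le\varepsilon$, so that the total error to leading order is $\Delta u + \Delta v + (u+v)\delta$.

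The key remaining estimate is to collapse this into a single constant multiplying $\bigl(|G^{(n-1)}||h|\odot|B^{(n-1)T}||x^{(n-1)}| + |h_{n_h}||B^{m(n)T}_{n:}||x|\bigr)$. Since the two groups of contributions are nonnegative, I would use $c_1|u| + c_2|v| \le \max\{c_1,c_2\}\,(|u|+|v|)$ with $c_1=\gamma^{(n_h)}+\beta^{(n-1)}+\alpha^{(n-1)}+1$ and $c_2=\beta^{(n)}+1$, and then absorb the single extra rounding $\delta$ from the outer addition, producing the factor $\max\{c_1,c_2\}+1$ of the statement. I expect the main obstacle to be precisely this step: justifying the $\max$ rather than a sum of the two error constants, and correctly attributing to the modification term the constant $\beta^{(n)}+1$ with no spurious $\alpha$ or $\gamma$ dependence. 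One must also track the block structure carefully enough to justify the prefactor $|A^{(n-1)T}_{n_o:}|$ in the final bound, and to confirm that the appended unit entries of the modified matrices contribute no rounding error, being the exactly representable constant $1$ as in the integer-power-of-two case discussed after Equation~\eqref{eq:dot_prod}.
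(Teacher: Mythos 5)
Your proposal is correct and follows essentially the same route as the paper's own proof: reduce the first $n_o-1$ outputs to Theorem~\ref{theorem:TC-1D-error} applied to the size-$(n-1)$ problem via the block structure of the modified matrices, then treat the last output as the size-$(n-1)$ result plus the modification term $h_{n_h}(B^{m(n)T}_{n:}x)$, bounding the two contributions by $\gamma^{(n_h)}+\beta^{(n-1)}+\alpha^{(n-1)}+1$ and $\beta^{(n)}+1$ respectively and combining them with the same $\max$ inequality. If anything, you are more careful than the paper: you explicitly derive the trailing $+1$ from the rounding of the final addition $fl(\hat u+\hat v)$, a step the paper's own final display silently omits even though it appears in the theorem statement.
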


\noindent The proof of this theorem is presented in Appendix \ref{sec:modWinograd_error},
along with corollaries for the case where $n_h \geq 3$.

{\fontsize{8}{10}
	\begin{algorithm}[h]
		\begin{multicols}{2}
			\SetAlgoNoLine
			\KwIn{$n_o$ - size of output, $n_h$ - size of kernel, \; $\{p_1,\cdots,p_{n-1}\}$ set of $n-1$ different points. \\
				$A^{(n-1)T}$, $B^{(n-1)T}$, $G^{(n-1)}$ - matrices constructed by Toom-Cook algorithm for $n-1$ points}
			\KwOut{Three matrices $A^{m(n)T}$, $G^{m(n)}$ $B^{m(n)T}$ for modified Toom-Cook convolution}
			$n=n_o+n_h-1$ \;
			\For{$i=1$ to $n-1$}{
				$N_i=\frac{1}{\prod_{j \neq i}(p_i-p_j)}$ \;
				$M_i =$ coefficient of the polynomial $\prod_k(a-p_k)$ of $i$th term
			}
			$A^{m(n)T}_{1:n_o,1:n-1}=A^{(n-1)T}$ \;
			\For{$i=1$ to $n_o-1$}{
				$A^{m(n)T}_{i,n}=0$ \label{line:zero1}
			}
			$A^{m(n)T}_{n_o,n}=1$ \;
			$G^{m(n)}_{1:n_h,1:n-1}=G^{(n-1)}$ \;
			\For{$j=1$ to $n_h-1$} {
				$G^{m(n)}_{n,j}=0$ \label{line:zero2}
			}
			$G^{m(n)}_{n,n_h}=1$ \;
			$B^{m(n)T}_{1:n-1,1:n-1}=B^{(n-1)T}$ \;
			\For{$i=1$ to $n-1$}{
				$B^{m(n)T}_{i,n}=0$ \label{line:zero3}
			}
			\For{$j=1$ to $n$}{
				$B^{m(n)T}_{n,j}=M_{j}$
			}
		\end{multicols}
		\caption{Modified Toom-Cook algorithm}
		\label{alg:mod_Toom-Cook}
	\end{algorithm}
}

\subsection{Toom-Cook versus Modified Toom-Cook}

Comparing the componentwise error of Toom-Cook (\ref{eq:TC-1D-element}) and
modified Toom-Cook (\ref{eq:ModWinograd-1D-element}) algorithms, we observe
that the error of modified Toom-Cook is smaller. We can see from the formula of
modified Toom-Cook (\ref{eq:ModWinograd-1D-element}) that, in contrast to
unmodified Toom-Cook (\ref{eq:TC-1D-element}) the errors do not spread
uniformly over all output points. The idea of computing one size smaller
convolution and using the pseudo-point $\infty$ results in a different error
boundary for the last output points. Thus our comparision is split in two
parts: the error comparison for first $n_o-1$ output points and the error comparison
for the last output point.

Looking to the error formulas (\ref{eq:TC-1D-element} and
\ref{eq:ModWinograd-1D-element}) for the first $n_o-1$ output points we observe
that the submatrices used in error estimation of the modified Toom-Cook algorithm
with input of size $n$ are the same as in the Toom-Cook algorithm with input of
size $n-1$. This results from the modified Toom-Cook algorithm definition in
Section~\ref{sec:modified}. Thus we have the same error in modified Toom-Cook algorithm
with input $n$ as for Toom-Cook with input $n-1$. Since the ill-conditioning of
Vandermonde matrices increases exponentially with size, the error due to the
conditioning of matrices in modified Toom-Cook algorithm is significantly
smaller, although still exponential.

The second factor in the formulas for the first $n_o-1$ output points of both
algorithms is the error from floating point operations. The error due to the
dot product has tighter boundaries for modified Toom-Cook
($\gamma^{(n_h)}+\beta^{(n-1)}+\alpha^{(n-1)}$) than for Toom-Cook \linebreak
($\gamma^{(n_h)}+\beta^{(n)}+\alpha^{(n)}$), if we assume the same method of
summation in both algorithms. It is clear that the worst-case error of
summation of $n-1$ elements is not larger than the worst-case error of
summation of $n$ elements, so $\beta^{(n-1)} < \beta^{(n)}$ and $\alpha^{(n-1)}
< \alpha^{(n)}$.

Because both components of the error in the first $n_o-1$ output points are
smaller in modified Toom-Cook, we can safely conclude that the overall worse
case error in these points is smaller than in the unmodified Toom-Cook
algorithm.

\subsubsection{Error in Modified Points}

To compare the error of the last output points for both algorithms, we observe
from the definition of the modified Toom-Cook algorithm that the error from
matrix elements is bounded by a sum of the error for Toom-Cook at size $n-1$
and the error provided by the last row of matrix $B^{m(n)T}$. The values in the
last row of the matrix are exactly the same as for a row constructed for an
interpolation point $p_i = 0$, but in \emph{reverse} order. Thus the overall
error from matrix elements for modified Toom-Cook is not larger than for
Toom-Cook.

The error from the method of dot product computation for the last output point
of the \emph{modified} algorithm is equal to
$\gamma^{(n_h)}+\beta^{(n-1)}+\alpha^{(n-1)}+2$ or $\beta^{(n)}+1$. In both
cases this value is smaller than the corresponding value
$\gamma^{(n_h)}+\beta^{(n)}+\alpha^{(n)}+1$ in the unmodified Toom-Cook error
estimation.

Although the error for \emph{both} Toom-Cook and modified Toom-Cook algorithms
grows exponentially, the error for all output points for modified version is
smaller than for the original unmodified Toom-Cook algorithm.

\section{Empirical Measurement of FP Error}
\label{sec:empirical}
The formal error analysis that has appeared in earlier sections of the
paper is a worst-case analysis. However, even if the worst-case error is potentially very
large, it is important to know something about the typical error that arises
in practice. Almost all formal analyses of FP error are worst-case
analyses. For example, all the analyses in Higham's standard textbook
on FP error are worst-case estimates \cite[p. 48]{Higham02}. Studies of average
case probabilistic FP error are possible in principle, but they rely
on assumptions about the distribution of errors that are difficult to
verify.  For example, Kahan, who won the Turing award for his
contributions to FP numerical analysis, has argued that FP rounding
errors are typically not random, often correlated, and often behave
more like discrete variables than continuous ones \cite{Kahan98}, which
makes average case analyses unreliable.

The focus of our work is on understanding and reducing the FP error in
fast DNN convolution. So rather than deal with the many pitfalls of
formal average case analysis, we make empirical measurements
of the FP errors.

To measure the error in Toom-Cook convolution, we first need the
algorithm for a specific size, which is defined by $n_h$, $n_o$ and
the $n = n_h+n_o-1$ real-valued \textit{points} that are used to
sample the polynomials corresponding to the input and kernel. We study
over $40000$ of point selections and find that the the values
of these \textit{points} has a huge impact on the FP error (see Section
\ref{sec:selecting-points}).

When generating the $A^T$, $G$ and $B^T$ matrices using these points,
we represent all values symbolically rather than as FP numbers.  This
allows us to generate exact values in each element of the convolution
matrices. Once the elements have been generated, we then convert each
value to the nearest representable FP number. Recall that  $A^T$, $G$
and $B^T$ are constant matrices, so we compute them as accurately as
possible ahead of time.

FP numbers are constructed as a logarithmic sampling of the real number
line, and the range $(-1,1)$ is where they have most precision. The
values of trained DNN weights are overwhelmingly concentrated in this
range in practice. Since we are interested in differentiating the inherent error in the
convolution algorithms, not just in the context of specific networks,
we would like to know something about the average case error
\emph{independent} of any specific dataset or network. For this reason,
rather than model inputs and kernels with specific distributions drawn
from real networks, we model them as random variables with uniform
distributions in the range$(-1,1)$.

We compute the error as the L1 norm $||\cdot||_1$ between the result of
the convolution, and an approximation of the numerically correct
result. We find our approximation of the numerically correct result
using \textit{direct} convolution in 64-bit double precision FP. We
compute the error as the L1 norm $||\cdot||_{L1}$ from the difference
between the result computed using the proposed method and our
\textit{approximately correct} result.

$$\|A^T(Gh \odot B^Tx) - h \ast x\|_{L1}$$
$$\|A^T(GHG^T \odot B^TXB)A - H \ast X\|_{L1}$$

For $1$ and $2$ dimensions respectively. Where for two vectors: $a = \begin{bmatrix}a_1 & \cdots &a_n\end{bmatrix}$ and \linebreak $b = \begin{bmatrix}b_1& \cdots &b_n\end{bmatrix}$ the norm $\| \cdot \|_{L1}$ is equal to sum of absolute from a difference between corresponded elements: $\|a - b\|_{L1}=\sum_i|a_i - b_i|$. For two matrices $A$ and $B$ the formula is $\|A-B\|_{L1}=\sum_{i,j}|A_{i,j}-B_{i,j}|$.

We found that $5000$ iterations of random testing was sufficient for
the average error to become stable. In all experiments we use a kernel
of size 3 for 1D and $3 \times 3$ for 2D convolution, which are the
most common sizes in real DNNs.

We empirically compared the numerical error of convolution algorithms generated by the Toom-Cook and modified Toom-Cook methods. The error is extremely sensitive to the \textit{points} that are selected. However, for a given set of \textit{points}, replacing one of them with the $\infty$ pseudo-point almost always reduces the error. For sets of points that otherwise result in a low error, we observed that modified Toom-Cook gave a reduction in numerical error from $20\%$ for kernel size 3 and output size $16$, to over $70\%$ for kernel size 3 and output $2$. Throughout the remainder of the paper, we use the $\infty$ pseduo-point to indicate where the modified algorithm is used.

\section{Selecting Points and Orders of Evaluation}
\label{sec:selecting-points}

The Toom-Cook method gives the mathematically correct result using any
sufficiently large set of distinct sampling points. However, there is
a large difference between the FP error using different sets of
points and there is no known systematic method for selecting the
best points to minimize the error \cite{Vincent17}. The points we use have an impact on the norm of matrices $G$, $A^T$ and $B^T$ as well as for the values of $\alpha^{(n)}$, $\beta^{(n)}$ and $\gamma^{(n_h)}$ in error formula in theorems (\ref{theorem:TC-1D-error}) and (\ref{theorem:TC-2D-error}).

In
this section we study the problem of selecting points
experimentally. In the first stage we simply evaluated the random sets
of points, and quickly discovered that (1) some sets of points are
much better than others; and (2) not just the value of the points, but
their ordering is important. The same set of points considered in a
different order give quite different numerical errors.

\subsection{Canonical Summation Order}
Different orderings of the same points give different
answers because of the order of evaluation. Different point
orderings result in different orderings of the values within the
$A^T$, $G$, $B^T$ matrices. The transform steps of Toom-Cook
convolution involve multiplying each of the input, kernel, and output
by one of these matrices. If we change the order of entries in the
matrix, then we change the order of evaluation at execution time,
which causes different FP rounding errors. Some point orderings were
better than others, but it was difficulty to predict the good ones
ahead of time.

Rather than searching different orderings of points, we propose to fix
the order of evaluation, so that all orderings of the same set of
points will be evaluated in the same order. The remaining problem is
to pick a canonical order of evaluation that works well in
practice. Each row of the $A^T$, $G$, $B^T$ matrices is used to
compute single dot product within a linear transform, and we specify a
canonical ordering for evaluating each of these dot products.

\begin{SCfigure}[][h]

	\subfloat[Linear]{
		\includegraphics[scale=0.23]{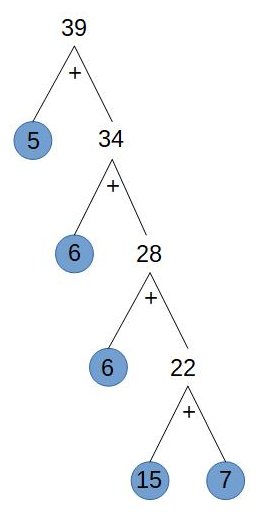}
		\label{linear}
	}
	\subfloat[Huffman]{
		\includegraphics[scale=0.25]{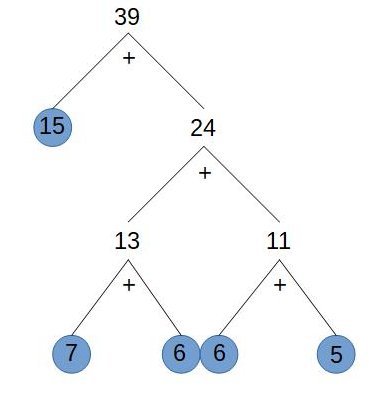}
		\label{huffman}
	}
	\caption{Linear and Huffman tree (canonical) summation methods. Our canonical ordering has two main advantages. It reduces the FP
		summation error in the linear transforms by improving the order of
		evaluation. It also ensures that we get the same error if we evaluate
		the same set of \textit{points} in different orderings; the order of
		evaluation is determined by the Huffman tree, not by the order in
		which the points are presented.
	}
	\label{fig:linear-huffman}
	
\end{SCfigure}

We build a Huffman \cite{Huffman52} tree using the absolute values of
each row, that is used to specify the order of summation. We also use
simple heuristics to break ties between coeffients with the same
absolute values.
A basic principle of accurate FP summation is to try to sum smaller
values first, as shown in Figure \ref{fig:linear-huffman}. Our Huffman
tree is based purely on the values of the rows of our constant
matrices; we build the tree at the algorithm design time, when the
input and kernel are unknown. This makes it much easier for us to
search empirically for good sets of \textit{points} because we need
only consider their value, not their ordering. In addition this method
allows us to use different order of summation for every row of
matrices that is not possible to obtain by points permutation.

There are a great deal of different summation methods. They were investigated in detail by Rump (\cite{Rump08I}, \cite{Rump08II}) and Demmel (\cite{Demmel04}). They guarantee the accurate or nearly accurate result of dot product computations. However they require additional arithmetic operations either (1) for a compensated summation, or (2) to sort elements before summation, that slow down the convolution computations. These methods trade-off increased accuracy against increased computation cost, which is similar to the mixed-precision method we propose in Section \ref{sec:mixed}.

Our canonical evaluation order is not guaranteed to sum in increasing
order of absolute value, because the execution time inputs might
contain large or small values. But in practice our canonical ordering
does much better than arbitrary orderings. We tested our approach with
the setup describe in Section \ref{sec:empirical}. Across a range of
convolution sizes using various points we found roughly a $14\%$
improvement in accuracy for $1D$ and $12\%$ for $2D$ compared with the
same selection of points in an arbitrary order.  All subsequent test results
presented in this paper use our Huffman summation for the transforms.

\subsection{Point Choice}

We empirically evaluated over $40000$ of random selections of
values for the \textit{points} that are used to construct the $G$,
$A^T$ and $B^T$ matrices that are used to perform the linear
transforms. We quickly found that it is very easy to find sets of
points that cause huge FP errors, and rather more difficult to find
better points.

There is no single recognized method for selecting \textit{points}
that minimize the FP error. However, the Chebyshev nodes are known to
improve the conditioning of polynomial interpolation \cite{Higham02},
which is an important step of Toom-Cook convolution. Results for the
FP error of using the Chebyshev nodes can be found in Appendix
\ref{sec:Chebyshev}. In general, the Chebyshev nodes are orders
of magnitude better than typical random point selections.

There is, however, some common wisdom in the literature on another
approach to selecting points to reduce the computation in the linear
transforms. In general, the points $\{0, -1, 1, \infty\}$ are good for
reducing these costs, assuming that the code to implement convolution
exploits these values. Multiplication by 1 or -1 can simply be
skipped, and multiplication by zero allows both the scaling and
addition to be skipped. Fortunately, eliminating FP operations also
eliminates their associated error, so these points are also suitable
to reducing FP error.

Problems start to arise where we need more than just these four basic
points. In general, researchers agree that selecting small simple
integers and fractions are good choices for reducing the required
number of scalings and additions. We also found this type of points to
be good for reducing the FP error. But there is no agreed-upon method in
the literature for selecting between different values such as $2, -2,
\frac{1}{2}, -\frac{1}{2}, \frac{3}{2}, \frac{2}{3}$, etc.

To help us find good sets of these simple values to reduce the FP
error, we developed the following rules which act as a heuristic to
guide our search. The size of the kernel and output block determine
the number of points needed. We start with the basic points $\{0, -1,
1, \infty\}$, which work well when four points are needed.
We perform our search for sets of good points for output $n$ based on
the good sets of points for output $n-1$. We establish a set of
potentialy interesting points according to below rules as rationals
with numerator in $\{-4, -3,-2,-1,0,1,2,3,4\}$ and denominator in
$\{1,2,3,4\}$. This gives us a set $P$ of $23$ possible points.

\subsubsection*{Our Algorithm}
\begin{itemize}
	\item We start with a set  $P_{n-1}$ of $n-1$ good points $p_1,...,p_{n-1}$
	\item We construct new sets of points by adding $p_n$. \\
	$P_{n-1}\cup \{p_n\}$  $\forall p_n$ such that $p_n \in P$ and $p_n \neq p_i$ $\forall i = 0,..,n-1.$
	\item If $n$ is even we construct a new set by dropping $p_j$ and adding two new points $p_k$ and $-\frac{1}{p_k}$. \\
	As $n$ is even we have at least one point $p_j$ without symmetry, that is,
	$p_j \in P_{n-1}$ and $-\frac{1}{p_j} \notin P_{n-1}$. We drop the point $p_j$ and add instead all pairs $p_k$ and $-\frac{1}{p_k}$\\
	$P_{n-1}\setminus \{p_j\} \cup \{p_k,-\frac{1}{p_k}\}$ $\forall p_k \in P$, $p_k, -\frac{1}{p_k} \neq p_i$ $\forall i=0,..,n-1$.
	\item If there are different sets of points for $1D$ and $2D$ we check both sets for $1D$ and $2D$ in parallel.
\end{itemize}

\noindent The resulting sets of ``good'' points are presented in Table
\ref{tab:alternative-pointsfl}. The basic four points are always $\{0,
-1, 1, \infty\}$.  Table \ref{tab:alternative-pointsfl} shows that when
we add a fifth point, we found empirically that $\frac{1}{2}$ is the
best point to add for both 1D and 2D convolution. Occasionally, when
moving to the next larger number of points we remove an existing point
and add two new ones, such as when we add the eight point for 1D
convolution. Note that the FP error versus direct convolution grows
rapidly with the number of points. The increased error is due to the
algorithm becoming less accurate with more \textit{points}. The growth
in error appears to be roughly exponential in the number of points in
practice. However, the growth in error is not smooth (see
Figure~\ref{fig:error_increasing}).

\definecolor{Gray}{gray}{0.9}
\begin{table}[h]
	\caption{Example points for Toom-Cook in $FP32$ with kernels of size 3 (for 1D) or $3 \times 3$ (for 2D). We start with set of $4$ points $P_4=\{0,-1,1,\infty\}$. Then we present the set of points for different cardinality as a additions and/or subtraction of sets of points.}
	\label{tab:alternative-pointsfl}
	\begin{minipage}{\columnwidth}
		\begin{small}
			\begin{center}
				\begin{tabular}{c|lcc|lcc}
					$n$    & Points 1D & Error 1D & $n_o$ & Points 2D & Error 2D & $n_o$ \\
					\toprule
					0  & Direct convolution  & 1.75E-08 & $1$  & Direct conv. &  4.63E-08 &  $1 \times 1$ \\
					\rowcolor{Gray}4  & $P_{4} = \{0,-1,1, \infty\}$  & 2.45E-08 & $2$ & $P_{4}$   &  7.65E-08 &   $2 \times 2$   \\
					5  & $P_{4} \cup \{\frac{1}{2}\}$      & 5.19E-08  & $3$ & $P_{4} \cup \{\frac{1}{2}\}$ &  2.35E-07 &  $3 \times 3$ \\
					6  & $P_{4} \cup \{\frac{1}{2}, -3\}$   & 6.92E-08  & $4$ & $P_{4} \cup \{\frac{1}{2}, -2\}$         &   3.29E-07 &  $4 \times 4$ \\
					7  & $P_{4} \cup \{\frac{1}{2}, -\frac{1}{2}, -3\}$ & 9.35E-08 & $5$ & $P_{4} \cup \{\frac{1}{2}, -2, -\frac{1}{2}\}$ & 6.81E-07 &  $5 \times 5$ \\
					\rowcolor{Gray}8  & $P_8 = P_{4} \cup \{\frac{1}{2}, -\frac{1}{2}, 2, -2\}$ & 1.15E-07 & $6$ & $P_{8}$ & 8.79E-07 &  $6 \times 6$ \\
					9  & $P_8 \cup \{-\frac{1}{4}\}$      & 2.34E-07  & $7$ & $P_8 \cup \{-\frac{1}{4}\}$  &  3.71E-06   &  $7 \times 7$\\
					10  & $P_{10}=P_{8} \cup \{-\frac{1}{4},4\}$                & 3.46E-07 & $8$ &  $P_{10}$  &  7.35E-06            &  $8 \times 8$\\
					11  & $P_{10} \cup \{\frac{1}{4}\}$      & 5.91E-07  & $9$ &  $P_{10} \setminus\{0\} \cup \{\frac{3}{4} , -\frac{4}{3}\}$ & 2.2E-05  & $9 \times 9$ \\
					\rowcolor{Gray}12  & $P_{10} \cup \{\frac{3}{4},-\frac{4}{3}\}$ & 7.51E-07 & $10$ &  $P_{10} \cup \{\frac{3}{4},-\frac{4}{3}\}$   & 3.22E-05  &   $10 \times 10$\\
					13  & $P_{10}\cup\{\frac{3}{4},-\frac{4}{3},\frac{1}{4}\}$      & 1.32E-06  & $11$ & $P_{10}\cup\{\frac{3}{4},-\frac{4}{3},\frac{1}{4}\}$   & 1.09E-04   &  $11 \times 11$ \\
					14  & $P_{14}= P_{10}  \cup\{\frac{1}{4},-\frac{3}{4},\frac{4}{3},-4\}$ & 1.84E-06 & $12$ & $P_{14}$ & 1.99E-04  &  $12 \times 12$ \\
					15  & $(P_{14}\setminus\{0\})$ $\cup \{\frac{2}{3},-\frac{3}{2}\}$ & 3.42E-06 & $13$ & $P_{14} \setminus\{0\}$ $\cup\{\frac{3}{4}, -\frac{4}{3}\}$& 5.54E-04 & $13 \times 13$ \\
					\rowcolor{Gray}16  & $P_{14} \cup \{\frac{2}{3},-\frac{3}{2}\}$        & 4.26E-06 & $14$ &  $P_{14}$ $\cup\{\frac{3}{4}, -\frac{4}{3}\}$     &  8.8E-04          &  $14 \times 14$ \\
					17  & $P_{14} \cup \{\frac{2}{3},-\frac{3}{2}, -\frac{2}{3}\}$   & 1.35E-05 & $15$ & $P_{14}$ $\cup\{\frac{2}{3}, -\frac{3}{2}, \frac{3}{2}\}$     &  1.07E-02           &  $15 \times 15$ \\
					18  & $P_{14} \cup \{\frac{2}{3},-\frac{3}{2}, -\frac{2}{3}, \frac{3}{2}\}$  & 2.24E-05 & $16$ &  $P_{14} \cup \{\frac{2}{3},-\frac{3}{2}, -\frac{2}{3}, \frac{3}{2}\}$   &   1.93E-02          &  $16 \times 16$
				\end{tabular}
			\end{center}
			\bigskip\centering
		\end{small}
	\end{minipage}
\end{table}%

\subsection{Error Growth}
If we consider the growth in 1D error from 7 to 8 points, the error
grows from $9.35\times 10^{-8}$ to $1.15\times 10^{-7}$, which is a
factor of around $1.12\times$. In contrast the growth in error from 8
to 9 points is $1.15\times 10^{-7}$ to $2.34\times 10^{-7}$, which is
a factor of $2.03\times$. This is not a coincidence. The empirically
good solution that we found when seven points are used for 1D
convolution is $\{0, -\frac{1}{2}, \frac{1}{2},-1, 1, -3,\infty\}$. In contrast when eight points are needed, the good
solution is $\{0, -\frac{1}{2}, \frac{1}{2},-1, 1, -2,2,\infty\}$. Among the eight points there is a symmetry between the four
values $\{-\frac{1}{2},\frac{1}{2}, -2, 2\}$, which are negations and
reciprocals of one another. As we discuss in Section
\ref{sec:discussion}, these symmetries reduce FP error.

In contrast, where 7 points are needed, the points $\{-\frac{1}{2},\frac{1}{2}, -3\}$ do not cancel in the same way, and so the error for
seven points is larger than a smooth growth in error with points would
suggest. Note that the appearance of the point $-3$ as the sixth
selected point for 1D convolution was a great surprise to us.
However, $-3$ has just two significant binary digits, so there is no
representation error of $-3$ in FP, and multiplication by $-3$ causes
a very small error. Further, when computing differences between pairs
of points for the $B^T$ matrix, $-3 -(-1) = -2$ and $-3-1 = -4$, both
of which are even powers of two.

For 2D convolution, small differences mean that $-3$ is very slightly
worse than $\frac{1}{2}, 2$ or $-2$ and is not selected.

Given these trends it is reasonable to ask where are the good
trade-offs between computation and error growth. The number of
\textit{general multiplications} is simply the number of
\textit{points} in the convolution algorithm. Using good point
selections has no additional cost over bad ones, but greatly reduces
the error. Similarly, our Huffman summation reduces the error at no
additional computation cost. There is no single best trade-off because
it depends on the required accuracy. However, using our point
selections and other methods it may be possible to increase the
output block size by 1-3 units.

\begin{figure}
	\begin{center}
		\subfloat[1D]{
			\includegraphics[scale=0.5]{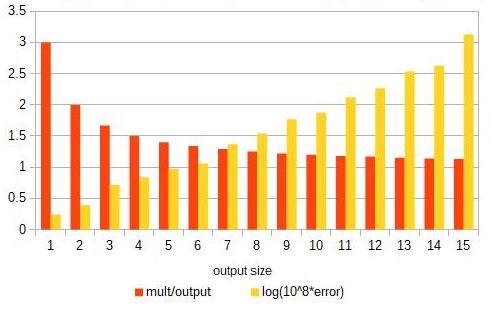}
			\label{fig:1D-blocks}
		}
		\subfloat[2D]{
			\includegraphics[scale=0.29]{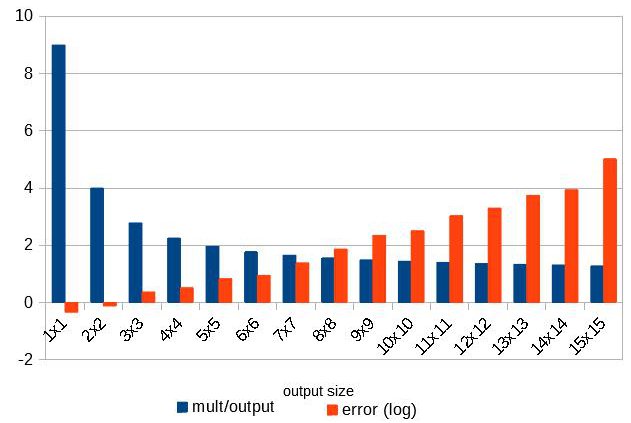}
			\label{fig:2D-blocks}
		}
		\caption{Number of multiplication and error for single output point for different input block sizes in one- and two-dimensional Toom-Cook convolution}
		\label{fig:Blocks}
	\end{center}
\end{figure}

\begin{SCfigure}[][h]
	\includegraphics[scale=0.4]{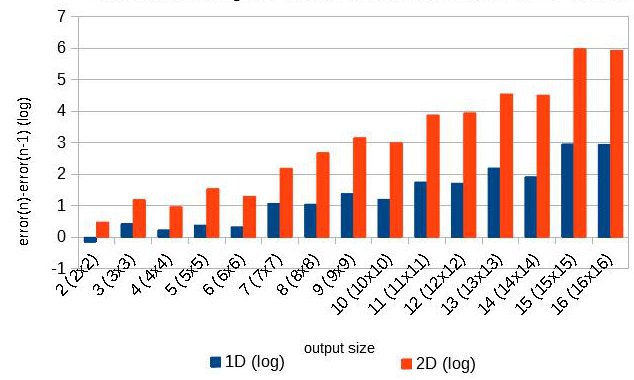}
	\caption{Increasing error in one- and one-dimensional Toom-Cook convolution. The vertical axis show the difference of logarithms from error of convolution computed with $n$ and $n-1$ points, to show error growth.}
	\label{fig:error_increasing}
\end{SCfigure}

When we examine the measured errors in Figure
\ref{fig:error_increasing} we see that even using good point
selections the measured error increases roughly exponentially with the
size of the convolution. The average measured error grows at a rate
that is compatible with the worst-case bound proven in
Section~\ref{sec:Toom-Cook}.

Our analysis in Section \ref{sec:2d-analysis} also suggests that the
FP error for 2D convolution grows quadratically more quickly than the
error for 1D convolution. This is borne out in Figure
\ref{fig:error_increasing}, where the 2D error as a function of the 1D
error is approximately $f(x) = \frac{x^{2}}{2.97}$.

As a further check on the consistency of our measurements, we also
implemented a \textit{running error} analysis for 1D Toom-Cook
convolution. Running error analysis
\cite[p. 65]{Higham02} is an empirical method that
computes a partial bound based on actual values alongside the
executing algorithm. In our experiments we found that the running
error closely matched the exponential rate of growth of the average
error, with the running error $4.63\times$ to $7.51\times$ times the
average error.

\subsection{Discussion of point selection}
\label{sec:discussion}

The point selection affect both components of forward error: conditioning of used matrices and floating point error. The goal of our tests is to find a good balance between them. Based on the our theoretical analysis, literature (\cite{Higham02}, \cite{Gautschi74/75}, \cite{Gautschi90}) and our empirical experiments those presented in Figure $4$ and many other tests that give us much larger FP error, it is possible to explain why some points are better than others.

One common way to mitigate the Vandermonde matrices ill-conditioning is use Chebyshev nodes. We tried this approach (see Appendix \ref{sec:Chebyshev}) but found that this did not perform well.
The size of Vandermonde matrices used in DNN convolution is relatively small. The error generated by ill-conditioning grows exponentially. But for the small convolutions in DNNs, the error from ill-conditioning is not so large as to outweigh the error from floating point operation and representation. Chebyshev nodes are mostly irrational, so they can not be represented exactly as FP values. This representation error propagates throughout the algorithm.

In addition, we are interested in the accuracy of discrete
convolution. There is a single correct answer (with known degree) to
the interpolation in Toom-Cook convolution. If we were computing
with infinite precision we could compute the correct polynomial
precisely. This is somewhat different to another common use of
interpolation, which is to \textit{estimate} a polynomial where the
degree is unknown. The advantages of Chebyshev interpolation
points to mitigate Runge's phenomenon does not help Toom-Cook
convolution. We can not ignore conditioning entirely. But our goal is to find a set of points that will minimize both factors: problem conditioning \textit{and} floating point error.

The four basic points $\{0,-1,1,\infty\}$ are almost always a good choices. In particular $0$ and $\infty$ result in guaranteed zeros in all three matrices $A^T$, $G$ and $B^T$ which cause no FP error. But we have to explain the point selection for bigger sets. We note that some clear rules for point selection emerge from our empirical study.

Firstly, we should use pairs of points that differ in sign (positive/negative), and pairs of reciprocal points --- see Table \ref{tab:alternative-pointsfl}. If we use point $p$ then using $-p$, $\frac{1}{p}$ and $-\frac{1}{p}$ allow us to get better accuracy than introducing other point. Positive/negative pairs of points generate lower elements in matrix $B^T$ \ref{alg:Toom-Cook}. Looking for the formula of matrix $B^T$ elements construction we can noticed that multiplication $(a-p)(a+p)=-p^2+0a+a^2$ results in zero coefficient of second term. Reciprocal points introduce the coefficient of the first term equal to $1$ in multiplication $(a-p)(a-frac{1}{p}p)=1-\frac{p^2+1}{p}a+a^1$ that not introduce any additional scalling error. The opposite points are also known to be good for Vandermonde matrices conditioning (\cite{Gautschi74/75}, \cite{Gautschi90}).

Secondly, the floating point error boundary depends directly on the values used in operations (see formula \ref{eq:dot_prod}). That means that we should look for the small values close to one to reduce the error. Putting it together with the previous observation we can say that choosing rational points minimizing numerator and denominator is a good strategy. This approach to points selection has also a positive impact for the floating point error. Assuming that each of kernel and input elements have a similar distribution using scaling factors with similar order of magnitude it is more likely to avoid cancellation error while computing dot product. The interesting point is that for bigger sets this is not always leading rule. We found that $\frac{1}{4}$ together with $(0,-1,1,-1/2,1/2,-2,2)$ works better than $\frac{1}{3}$ for $9$ points (see tables \ref{tab:alternative-pointsfl}, \ref{tab:alternative-pointsfld}). We explain this phenomena later in this section.

Thirdly, representation error of matrices elements have a big impact for the accuracy of the result. As we mentioned below the representation error propagates through all floating point operations and therefore can significantly grow up. This is why the exactly represented points work best in investigated algorithm (see tables \ref{tab:alternative-pointsfl}, \ref{tab:alternative-pointsfld}).

Finally, as we described in subsection (\ref{subsec:LinTransf}) the error from multiplication while computing the dot product affects on accuracy as well. The elements equal to power of $2$ do not introduce any error from scaling and therefore keep floating point error smaller. This and previous observation explain us why point $\frac{1}{4}4$ is better then $\frac{1}{3}$. The value $\frac{1}{4}$ in contrast to $\frac{1}{3}$ is exactly represented and do not introduce any error from multiplication.

Thus there is not a simple algorithm to choose a set of good points for Winograd algorithm. Our theoretical analysis allow us to identify all components of the error and dramatically narrow the search space. With that knowledge it is possible to check empiricaly which of the narrow sets of points works the best in practice.

\section{Mixed-precision pre-/post-processing}
\label{sec:mixed}
We often apply the same kernel to the set of many different inputs. Similarly we often compute convolution with different kernels for the same input data. Thus, the pre-/post processing of each input, kernel and output are done just once, whereas the transformed data is used many times. One way to improve accuracy is to use a mixed-precision algorithm, where the pre-/post processing is done in higher precision, while the inner loops that perform the pairwise multiplication (Hadamard product) are computed in standard precision. This approach lowers the value of machine epsilon $\varepsilon$ for the linear transforms in the error formulsa in Theorems (\ref{theorem:TC-1D-error}) and (\ref{theorem:TC-2D-error}).

Table \ref{tab:alternative-pointsfld} shows the point selection and
measured errors for a mixed-precision Toom-Cook that performs the
pre-/processing in FP64 and all other processing in FP32. We found
that the mixed precision algorithm reduced the error in both $1D$ and
$2D$ by up to around $40\%$ see column "Ratio" in table
(\ref{tab:alternative-pointsfld}). The result is that for the same
level of error, the mixed-precision algorithm can often allow an ouput
size that is one larger. We observe that in most cases the same sets
of points worked best for convolution computed in FP32 and in mixed
precision. Where there are differences, in most cases this is the
result of a slight difference in the order in which points are
selected when the number of points is odd. In the mixed-precision
rounding errors during the pre-/post processing steps become a little
less important because intermediate values are represented in FP64.

The cost of computing in double precision is significant. Modern
processors typically use vector arithmetic units, and the throughout
of double precision (FP64) is normally just half of single precision
(FP32). On graphics processing units (GPUs) the disparity between
single and double precision can be much larger. Furthermore, data
conversions between single and double precision are normally needed,
which further increase the computing cost.

A growing trend in deep neural networks is to store inputs and kernels
in FP16 precision in memory, but to compute in FP32. Most Intel and
ARM processors do not support FP16 arithmetic. But they provide fast
instructions for converting from FP16 values to FP32 to allow FP16
storage and FP32 computation. Many GPUs support native FP16
arithmetic, but it is not commonly used for DNN convolution. FP16
errors accumulate too rapidly for DNN convolution, particularly the
errors arising from summation across channels. Recent NVidia GPUs
provide co-called \textit{tensor cores}, which are specifically aimed
at storing data in FP16 and computing in FP32. The tensor cores accept
FP16 inputs, and compute a multiple-input \textit{fused multiply and
	add} operation, which produces a FP32 result. These operations could
be used to implement mixed-precision FP16/FP32 linear transforms for
Toom-Cook convolution.

\begin{table}[h]

	\caption{Example points for Toom-Cook method computations in $FP32$ with transforms in $FP64$ with kernels of size 3 (for 1D) or $3 \times 3$ (for 2D). We start with set of $4$ points $P_4=\{0,-1,1,\infty\}$. Then we present the set of points for different cardinallity as a additions and/or substractionf of sets of points. The column Ratio present the ratio between the error of convolution computations with transforms in $FP64$ and convolution computations in $FP32$.}
	\label{tab:alternative-pointsfld}
	\begin{minipage}{\columnwidth}
		\begin{footnotesize}
			\begin{center}
				\begin{tabular}{c|lccc|lcc}
					\toprule
					$n$    & Points 1D & Error 1D & $n_o$ & Ratio & Points 2D & Error 2D & Ratio \\
					\toprule
					0 & Direct convolution  & 1.75E-08  & 1 & 1 & Direct convolution &  4.63E-08  & 1 \\
					\rowcolor{Gray}4 & $P_{4}=\{0,-1,1, \infty\}$ & 1.87E-08 & 2 & 0.76 & $P_{4}$ & 5.27E-08 & 0.69 \\
					5 & $P_{4} \cup \{3\}$ & 3.66E-08  & 3 & 0.71 & $P_{4} \cup \{3\}$ & 1.62E-07 & 0.69 \\
					6 & $P_{4} \cup \{3,-\frac{1}{2}\}$ & 4.41E-08 & 4 & 0.64 & $P_{4} \cup \{3,-\frac{1}{2}\}$ & 2.14E-07 & 0.65 \\
					7 & $P_{4} \cup \{3,-\frac{1}{2}, \frac{1}{2}\}$ & 6.09E-08 & 5 & 0.65 & $P_{4} \cup \{3,-\frac{1}{2},\frac{1}{2}\}$ & 3.69E-07 & 0.54 \\
					\rowcolor{Gray}8  & $P_{8}=P_{4} \cup \{-\frac{1}{2},\frac{1}{2},-2,2\}$& 6.97E-08 & 6 & 0.61 & $P_{8}$ & 5.18E-07 & 0.59 \\
					9  & $P_{8} \cup \{-\frac{1}{4}\}$ & 1.55E-07 & 7 & 0.66 & $P_{8} \cup \{4\}$ & 2.42E-06 & 0.65\\
					10 & $P_{10}=P_{8} \cup \{-\frac{1}{4},4\}$ & 2.09E-07 & 8 & 0.6 & $P_{10}$ & 4.41E-06 & 0.6 \\
					11  & $P_{10} \cup \{\frac{1}{4}\}$ & 3.64E-07 & 9 & 0.62 & $P_{10} \setminus \{0\} \cup \{\frac{3}{4},-\frac{4}{3}\}$ & 1.27E-05 & 0.58 \\
					\rowcolor{Gray}12 & $P_{12}=P_{10} \cup \{\frac{3}{4},-\frac{4}{3}\}$ & 4.50E-07 & 10 & 0.6 & $P_{12}$ & 1.89E-05 & 0.59 \\
					13 & $P_{12} \cup \{\frac{1}{4}\}$ & 8.25E-07 & 11 & 0.63 & $P_{12} \cup \{-4\}$ & 6.38E-05 & 0.59 \\
					14 & $P_{14}=P_{12} \cup \{\frac{1}{4},-4\}$ & 1.11E-06 & 12 & 0.6 & $P_{14}$ & 1.14E-04 & 0.57 \\
					15 & $P_{12} \setminus \{0\} \cup\{\frac{2}{3},-\frac{3}{2}\}$ & 2.17E-06 & 13 & 0.63 & $P_{12} \setminus \{0\} \cup\{-\frac{3}{4}, \frac{4}{3}\}$ & 3.08E-04 & 0.56\\
					\rowcolor{Gray}16 & $P_{16}=P_{14} \cup \{\frac{2}{3},-\frac{3}{2}\}$ & 2.78E-06 & 14 & 0.65 & $P'_{16}=P_{14} \cup \{-\frac{3}{4},\frac{4}{3}\}$ & 4.95E-04 & 0.56 \\
					17 & $P_{16} \cup \{-\frac{2}{3}\}$ & 8.43E-06 & 15 & 0.62 & $P'_{16} \cup \{\frac{3}{2}\}$ & 5.93E-03 & 0.55 \\
					18 & $P_{18}=P_{16} \cup \{-\frac{2}{3},\frac{3}{2}\}$ & 1.39E-05 & 16 & 0.62 & $P_{18}$ &  1.04E-02 & 0.54
				\end{tabular}
			\end{center}
			\bigskip\centering
		\end{footnotesize}
	\end{minipage}
\end{table}%

\section{Multiple channels}
The proposed techniques up to this point of the paper have been for
simple 1D or 2D convolution with a size $3$ or $3 \times 3$ matrix
respectively. However, an important feature of convolution in deep
neural networks is multiple channels. Convolution inputs and kernels
in many of the best known DNNs, such as GoogLeNet \cite{Szegedy15} or
ResNet \cite{He16} typically have something between 3 and 1024
channels. However, the number of channels is a parameter selected by
the designer of the neural network, and there is no upper limit on the
number of channels used.

When performing convolution, a separate 1D or 2D convolution is
performed at each channel, and then the results of each separate
convolution is summed with the corresponding values in the other
channels see figure \ref{fig:DNN-diagram}.  The obvious way to
implement summation across channels is to perform the complete
convolution separately on each channel, and sum the results. However,
this would require that the post-processing linear transform is
applied on each channel, which is a relatively expensive
operation. Lavin and Gray \cite{Lavin16} observed that the items can
be summed \textit{before} the linear transform, so that the
post-processing step need be applied only to the sum.  Therefore, in
order to perform convolution over multiple channels we have to sum up
the results of pairwise multiplication  (Hadamard product) before we apply the
transposition represented by $A^T$ matrix. The convolution is computed
according the following formulas:
$$A^T(\sum_{channels}(Gh \odot B^Tx))$$
$$A^T(\sum_{channels}(GHG^T \odot B^TXB))A$$

As a result, the FP error from DNN convolution is not just the error
of the 1D or 2D convolution, but also the error from summing across
channels. This is important for two reasons. First, if using Toom-Cook
convolution increases the numerical error. The impact of summation
over channels in error formulas (\ref{eq:channel_error_1D}) and
(\ref{eq:channel_error_2D}) is represented by $\lambda$.

Second, there are well-known techniques for reducing the
error from summation. If we reduce the error of summation, this may
offset some part of the loss of accuracy arising from Toom-Cook
convolution.

In section \ref{sec:mixed} we proposed a mixed precision algorithm
that does pre-/post-processing in higher precision. However, this is
not a suitable approach to increase the accuracy of the summation
across channels. The summation across channels is in the inner-most
loop of DNN convolution, so we cannot afford to double its cost.
We instead propose using the well-known pairwise summation algorithm
\cite{Knuth98} for summation across channels.

When summing $n$ FP inputs, the worst-case error from simply
accumulating to a single variable is $O(n)$. In contrast, the pairwise
summation algorithm has a worst-case error of just $O(\log_2n)$
\cite{Knuth98}.  Given our existing error bound for Toom-Cook
convolution with multiple channels, we can formulate the effect of
using pairwise summation rather than linear summation.

\begin{corollary}
	The error for 1D convolution based on theorems (\ref{theorem:TC-1D-error}) and (\ref{theorem:TC-2D-error}), using linear summation across channels is:
	\begin{equation}
	\label{eq:channels_linear}
	\|\hat{s}-s\|_1 \leq \|A^T\|_1\thinspace\|G\|_F\thinspace\|B^T\|_F\thinspace\|h\|_F\thinspace\|x\|_F\left(\alpha^{(n)}+\beta^{(n)}+\gamma^{(n_h)}+C\right)\varepsilon+O(\varepsilon^2)
	\end{equation}
	
	For pairwise summation across channels, the corresponding error is:
	\begin{equation}
	\label{eq:channels_pairwise}
	\begin{split} 
	&\|\hat{s}-s\|_1 \leq\\ &\leq \|A^T\|_1\thinspace\|G\|_F\thinspace\|B^T\|_F\thinspace\|h\|_2\thinspace\|x\|_2\left(\alpha^{(n)}+\beta^{(n)}+\gamma^{(n_h)}+\lfloor log(C) \rfloor+2\right)\varepsilon+O(\varepsilon^2)
	\end{split}
	\end{equation}
\end{corollary}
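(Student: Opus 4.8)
The plan is to reuse the componentwise error decomposition from the proof of Theorem \ref{theorem:TC-1D-error}, treating the cross-channel summation as one extra stage inserted between the per-channel Hadamard products and the final $A^T$ transform. First I would fix the exact per-channel Hadamard product $u_c = Gh_c \odot B^Tx_c$ and note that, stopping the single-channel analysis just \emph{before} postprocessing, the computed value $\hat{u}_c$ satisfies $|\hat{u}_c - u_c| \leq (|G||h_c| \odot |B^T||x_c|)(\gamma^{(n_h)} + \beta^{(n)} + 1)\varepsilon + O(\varepsilon^2)$. This is precisely the componentwise bound \eqref{eq:TC-1D-element} with the $\alpha^{(n)}$ contribution (the $A^T$ transform) stripped off; in the notation of that proof it is the error accumulated in $a_3$, where the ``$+1$'' is the Hadamard multiplication itself.

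Next I would account for the channel summation. Writing $\hat{v} = fl(\sum_{c=1}^C \hat{u}_c)$ and applying the summation model underlying Equation \eqref{eq:dot_prod}, the accumulated rounding over $C$ summands contributes a factor $\lambda^{(C)}$, so that $|\hat{v} - \sum_c \hat{u}_c| \leq \lambda^{(C)}\sum_c|u_c|\varepsilon + O(\varepsilon^2)$. Combining this with the per-channel bound through the triangle inequality and then applying $A^T$ (whose error contributes $\alpha^{(n)}$) yields the componentwise estimate $|\hat{s} - s| \leq |A^T|\bigl(\sum_c |G||h_c| \odot |B^T||x_c|\bigr)\bigl(\alpha^{(n)} + \beta^{(n)} + \gamma^{(n_h)} + 1 + \lambda^{(C)}\bigr)\varepsilon + O(\varepsilon^2)$. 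The only quantity sensitive to the summation scheme is $\lambda^{(C)}$: linear accumulation forces the first summand through $C-1$ additions, giving $\lambda^{(C)} = C-1$, whereas binary-tree (pairwise) summation sends each summand through at most $\lceil \log_2 C\rceil$ additions.

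To reach the normwise statements I would take $\|\cdot\|_1$ of this bound, factor out $\|A^T\|_1$, and use nonnegativity of the terms to split $\bigl\|\sum_c |G||h_c| \odot |B^T||x_c|\bigr\|_1 = \sum_c \bigl\||G||h_c| \odot |B^T||x_c|\bigr\|_1$. Applying the Buniakowski--Schwartz inequality within each channel (exactly as in Theorem \ref{theorem:TC-1D-error}) together with the norm equivalence \eqref{eqn.Frob2NormBounds} gives $\sum_c \|G\|_F\|h_c\|_2\|B^T\|_F\|x_c\|_2$, and a \emph{second} Cauchy--Schwarz step across channels collapses $\sum_c \|h_c\|_2\|x_c\|_2 \leq \|h\|_F\|x\|_F$. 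This produces the unified bound with factor $\alpha^{(n)}+\beta^{(n)}+\gamma^{(n_h)}+1+\lambda^{(C)}$. Substituting $\lambda^{(C)} = C-1$ recovers \eqref{eq:channels_linear}, and substituting $\lceil \log_2 C\rceil \leq \lfloor \log_2 C\rfloor + 1$ recovers \eqref{eq:channels_pairwise}.

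The main obstacle is bookkeeping rather than genuine difficulty: one must keep the two error layers cleanly separated, since the transform/Hadamard error scales each summand $|u_c|$ individually while the summation error scales the running partial sums, and then verify that to first order in $\varepsilon$ they combine into a single factor multiplying the common term $\sum_c |G||h_c| \odot |B^T||x_c|$. The slight looseness in the pairwise constant (the ``$+2$'' rather than an exact ``$+1+\lfloor\log_2 C\rfloor$'') is exactly the price of replacing $\lceil\log_2 C\rceil$ by $\lfloor\log_2 C\rfloor+1$, which is an equality unless $C$ is a power of two.
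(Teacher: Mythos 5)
Your proposal is correct and follows essentially the same route the paper takes implicitly: the paper's multi-channel bound with $R=\alpha^{(n)}+\beta^{(n)}+\gamma^{(n_h)}+1+\lambda^{(C)}$ is exactly your decomposition (per-channel transform/Hadamard error, a $\lambda^{(C)}$ term for cross-channel summation, then a single $\alpha^{(n)}$ for the deferred $A^T$ transform), and the corollary is obtained by substituting $\lambda^{(C)}=C-1$ for linear and $\lambda^{(C)}=\lceil\log_2 C\rceil\leq\lfloor\log_2 C\rfloor+1$ for pairwise summation. The paper states this corollary without a written proof, so your derivation — including the channel-wise Cauchy--Schwarz step collapsing $\sum_c\|h_c\|_2\|x_c\|_2$ to $\|h\|_F\|x\|_F$ — simply supplies the bookkeeping the paper leaves implicit, and it is sound.
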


As we can observe comparing formulas
(\ref{eq:channels_linear}) and (\ref{eq:channels_pairwise}) we have smaller
overall error when we use the pairwise summation over channels than
for linear summation.

Tables \ref{tab:channels1Dfl}, \ref{tab:channels2Dfl},
\ref{tab:channel1Dfld}, \ref{tab:channels2Dfld} present measured
errors for Toom-Cook convolution with just a single channel, with 32
channels, and with 64 channels. We see that the error per output value
for 64 input channels is much larger than the error for just a single
input channel. But the error is around 2--20 times larger, not 64
times larger. The reason is that when these values are summed, some of
the errors cancel one another.

Our results presented in tables \ref{tab:channels1Dfl},
\ref{tab:channels2Dfl}
show that pairwise summation can reduce the total FP error by around
20\%--40\%.  Similar tests presented in tables
\ref{tab:channel1Dfld},\ref{tab:channels2Dfld} show that when pairwise
summation across channels is used with mixed-precision transforms, the
improvement compared to mixed-precision transforms alone is 25\%-45\%.
Using both proposed methods: mixed precision and pairwise summation
(\ref{tab:Ratio}) give us an improvement in accuracy of around $50\%$ in
both one- and two-dimensional computations.

\begin{table}[h]
	\caption{Toom-Cook over multiple channels in $FP32$ --- error per single output point for 1 dimension convolution. Columns "ratio in $\%$" present the ratio of error per single output point get with pairwise summation and error per single output point with liner summation in $\%$.}
	
	\label{tab:channels1Dfl}
	\begin{minipage}{\columnwidth}
		\begin{center}
			\begin{tabular}{cccccccc}
				\toprule
				Out & 1       & 32      & 32 channels    & ratio   & 64      & 64 channels   & ratio \\
				size   & channel & channels & pairwise sum  & in $\%$ & channels & pairwise sum & in $\%$ \\
				\hline
				1 & 1.75E-08 & 2.74E-07 & 1.90E-07 & $69\%$ &5.12E-07 & 2.87E-07 & 56$\%$ \\
				2 & 2.45E-08 & 3.80E-07 & 2.71E-07 & $71\%$ &7.03E-07 & 4.00E-07 & 57$\%$ \\
				3 & 5.19E-08 & 7.08E-07 & 5.11E-07 & $72\%$ &1.28E-06 & 7.59E-07 & 59$\%$ \\
				4 & 6.92E-08 & 8.35E-07 & 6.17E-07 & $74\%$ &1.48E-06 & 9.18E-07 & 62$\%$ \\
				5 & 9.35E-08 & 1.09E-06 & 8.35E-07 & $77\%$ &2.00E-06 & 1.24E-06 & 62$\%$ \\
				6 & 1.15E-07 & 1.31E-06 & 9.79E-07 & $75\%$ &2.34E-06 & 1.47E-06 & 63$\%$ \\
				7 & 2.34E-07 & 2.90E-06 & 2.16E-06 & $74\%$ &5.21E-06 & 3.20E-06 & 61$\%$ \\
			\end{tabular}
		\end{center}
		\bigskip\centering
	\end{minipage}
\end{table}%

\begin{table}[h]
	\caption{Toom-Cook over multiple channels in $FP32$ - error per single output point for two-dimensional convolution. Columns "ratio in $\%$" present the ratio of error per single output point get with pairwise summation and error per single output point with linear summation in $\%$.}
	
	\label{tab:channels2Dfl}
	\begin{minipage}{\columnwidth}
		\begin{center}
			\begin{tabular}{cccccccc}
				\toprule
				Out & 1       & 32      & 32 channels   & ratio   & 64      & 64 channels & ratio \\
				size   & channel & channels & pairwise sum & in $\%$ & channels & pairwise sum & in $\%$\\
				\hline
				$1 \times 1$ & 4.63E-08 & 5.25E-07 & 3.95E-07 & $75\%$ & 9.44E-07 & 5.83E-07 & 62$\%$ \\
				$2 \times 2$ & 7.65E-08 & 9.05E-07 & 6.47E-07 & $71\%$ & 1.65E-06 & 9.59E-07 & 58$\%$\\
				$3 \times 3$ & 2.35E-07 & 2.87E-06 & 2.09E-06 & $73\%$ & 5.33E-06 & 3.11E-06 & 58$\%$\\
				$4 \times 4$ & 3.29E-07 & 3.60E-06 & 2.70E-06 & $75\%$ & 6.56E-06 & 3.98E-06 & 61$\%$\\
				$5 \times 5$ & 6.81E-07 & 7.78E-06 & 5.71E-06 & $73\%$ & 1.41E-05 & 8.57E-06 & 61$\%$\\
				$6 \times 6$ & 8.79E-07 & 9.48E-06 & 7.12E-06 & $75\%$ & 1.71E-05 & 1.04E-05 & 61$\%$\\
				$7 \times 7$ & 2.43E-06 & 4.66E-05 & 3.41E-05 & $73\%$ & 8.41E-05 & 5.09E-05 & 61$\%$\\
			\end{tabular}
		\end{center}
		\bigskip\centering
	\end{minipage}
\end{table}%

\begin{table}[h]
	\caption{Toom-Cook computation in $FP32$ with transforms in $FP64$- error per single output point for 1 dimension convolution. Columns "ratio in $\%$" present the ratio of error per single output point get with pairwise summation and error per single output point with liner summation in $\%$.}
	
	\label{tab:channel1Dfld}
	\begin{minipage}{\columnwidth}
		\begin{center}
			\begin{tabular}{cccccccc}
				\toprule
				Out & 1       & 32      & 32 channels    & ratio   & 64      & 64 channels & ratio \\
				size   & channel & channels & pairwise sum  & in $\%$ & channels & pairwise sum & in $\%$\\
				\hline
				1 & 1.75E-08 & 2.73E-07 & 1.90E-07 & $70\%$ & 5.15E-07 & 2.88E-07 & $56\%$ \\
				2 & 1.87E-08 & 3.60E-07 & 2.31E-07 & $64\%$ & 6.73E-07 & 3.58E-07 & $53\%$ \\
				3 & 3.66E-08 & 6.50E-07 & 4.39E-07 & $68\%$ & 1.20E-06 & 6.72E-07 & $56\%$ \\
				4 & 4.41E-08 & 7.45E-07 & 5.16E-07 & $69\%$ & 1.41E-06 & 7.86E-07 & $56\%$ \\
				5 & 6.09E-08 & 1.00E-06 & 6.98E-07 & $70\%$ & 1.92E-06 & 1.06E-06 & $55\%$ \\
				6 & 6.97E-08 & 1.17E-06 & 7.90E-07 & $68\%$ & 2.18E-06 & 1.20E-06 & 55$\%$\\
				7 & 1.55E-07 & 2.60E-06 & 1.80E-06 & $69\%$ & 4.91E-06 & 2.75E-06 & 56$\%$\\
			\end{tabular}
		\end{center}
	\end{minipage}
\end{table}%

\begin{table}[h]
	\caption{Toom-Cook computation in $FP32$ with transforms in $FP64$- error per single output point for two-dimensional convolution. Columns "ratio in $\%$" present the ratio of error per single output point get with pairwise summation and error per single output point with liner summation in $\%$.}
	
	\label{tab:channels2Dfld}
	\begin{minipage}{\columnwidth}
		\begin{center}
			\begin{tabular}{cccccccc}
				\toprule
				Out & 1       & 32      & 32 channels    & ratio  & 64      & 64 channels & ratio \\
				size   & channel & channels & pairwise sum  & in $\%$& channels & pairwise sum & in $\%$\\
				\hline
				$1 \times 1$ & 4.63E-08 & 5.25E-07 & 3.95E-07 & $75\%$ & 9.48E-07 & 5.85E-07 & $62\%$ \\
				$2 \times 2$ & 5.27E-08 & 8.51E-07 & 5.54E-07 & $65\%$ & 1.59E-06 & 8.48E-07 & $53\%$ \\
				$3 \times 3$ & 1.62E-07 & 2.70E-06 & 1.80E-06 & $67\%$ & 5.13E-06 & 2.75E-06 & $54\%$ \\
				$4 \times 4$ & 2.14E-07 & 3.60E-06 & 2.36E-06 & $66\%$ & 6.68E-06 & 3.61E-06 & $54\%$ \\
				$5 \times 5$ & 3.69E-07 & 6.06E-06 & 4.07E-06 & $67\%$ & 1.14E-05 & 6.17E-06 & $54\%$ \\
				$6 \times 6$ & 5.18E-07 & 8.48E-06 & 5.64E-06 & $67\%$ & 1.59E-05 & 8.53E-06 & $54\%$\\
				$7 \times 7$ & 3.39E-06 & 4.21E-05 & 2.91E-05 & $69\%$ & 8.03E-05 & 4.34E-05 & $54\%$
			\end{tabular}
		\end{center}
	\end{minipage}
\end{table}%

\begin{table}[h]
	\caption{Ratio of Toom-Cook in $FP32$ with linear summation over the channels and in mixed precision with pairwise summation over the channels - error for single output point in $1$ and $2$ dimensions.}
	\label{tab:Ratio}
	\begin{minipage}{\columnwidth}
		\begin{center}
			\begin{tabular}{cccccc}
				\toprule
				Output size & 32       & 64       & Output size & 32       & 64 \\
				1D          & channels & channels & 2D          & channels & channels \\
				\hline
				1 & $69\%$ & $56\%$ & $1 \times 1$ & $75\%$ & $62\%$ \\
				2 & $61\%$ & $51\%$ & $2 \times 2$ & $61\%$ & $51\%$ \\
				3 & $62\%$ & $53\%$ & $3 \times 3$ & $63\%$ & $52\%$ \\
				4 & $62\%$ & $53\%$ & $4 \times 4$ & $66\%$ & $55\%$ \\
				5 & $65\%$ & $53\%$ & $5 \times 5$ & $52\%$ & $44\%$ \\
				6 & $60\%$ & $51\%$ & $6 \times 6$ & $59\%$ & $50\%$ \\
				7 & $62\%$ & $53\%$ & $7 \times 7$ & $62\%$ & $52\%$
			\end{tabular}
		\end{center}
		\bigskip\centering
	\end{minipage}
\end{table}

\section{Related work}
Lavin and Gray \cite{Lavin16} wrote the seminal paper on applying
Winograd convolution to deep neural networks. They showed how to apply
2D versions of these algorithms to DNN convolution with multiple input
and output channels, and how to amortize the cost of
pre-/post-processing over many convolutions.  Although they used the
Toom-Cook method to generate their core convolution algorithms, they
refered to it as Winograd convolution, and that has become the
accepted term in the DNN literature.

By far the closest existing work to ours is from Vincent et
al. \cite{Vincent17}. They propose to scale convolution matrices to
reduce the condition number of the Vandermonde matrices. They
demonstrate that this approach can reduce the error number in exactly
one case: convolving a $5 \times 5$ kernel to create a $9 \times 9$
output block. Further they showed that this improved matrix could be
used to successfully for training a DNN. However, they did not provide
a method for choosing good scaling factors. Our approach to reducing
FP error is equally empirical, but we focus on constructing good
convolution matrices rather than improving them after construction. We
measured the error for our $5 \times 5$, $9 \times 9$ (with 13
\textit{points}) convolution matrices and compared it with Vincent et
al.'s solution. We found that our convolution matrices yield an error
that is around 45\% lower (better) than Vincent et al.'s.

The idea of applying the Toom-Cook algorithm to compute convolution
was investigated and in great detail by Shmuel Winograd
\cite{Winograd80}. He focused on the low complexity of Toom-Cook
convolution, and proved that it is optimal with respect to the number
of general multiplications. Winograd developed his own method of
generating short convolution algorithms based on the Chinese remainder
theorem.  Winograd's method can create a much larger set of algorithms
than Toom-Cook, including algorithms that are not optimal with respect
to general mutliplications.

To the best of our knowledge, this paper is the first that presents a theoretical analysis of the numerical error of Toom-Cook convolution.
We demonstrate that the algorithm is unstable because the properties of algorithm parameters matrices we use \linebreak ($G$, $B^T$, $A^T$).
We formulate the boundaries of the FP errors for one- and two-dimensional kernels and inputs so we can resonable choose what we should
focus on to improve the accuracy. We formulated the error bounds for Toom-Cook convolution using similar techniqes to those used for another bilinear problem: the fast matrix multiplication, error estimation by Bini and Lotti \cite{Bini80}, Demmel et al. \cite{Demmel07} and \cite{Ballard16}. We show that algorithm is unstable and how the error depends on each component.

As we can see from our errors formulation, the stability of the Toom-Cook convolution depends directly on the values of the matrices $G$, $A^T$ and $B^T$ not only on input and kernel values and sizes. While it has been empirically observed and theoreticaly proven that the condition number of square Vandermonde matrices containing real (not complex) points increases exponentially, to our knowledge there is no theoretically-sound method for choosing the best points. There are some more specific boudaries for particular sets of points, i.e. harmonic, equidistance, positive, in the range $(-1,1)$ \cite{Higham02} and complex points \cite{Pan16}. The way we choose points in our tests allow us to obtain sets of good points for specific input and kernel sizes, but points were find to be good empirically do not follow any of these simple patters. In general, minimizing the numerator and denominator works well, but it is not always the best approach. The FP representation \cite{Goldberg91} \cite{Higham02} and symmetry of reciprocal and inverse points \cite{Bodrato07} \cite{Gautschi90} matters as well.

In our search for the best points, we studied a wide range of literature on the conditiining of Vandermonde matrices dating from the last $40+$ years \cite{Pan16} \cite{Gautschi90} \cite{Gautschi74/75} \cite{Higham02} and Toom-Cook algorithm \cite{Bodrato07} \cite {Selesnick94}. We took under consideration all theoretical results we found while developing our strategy for finding good points. Since there is not any clear pattern of points to choose, the lack of theoretical background on Vandermonde inverse matrix norm did not allow us to present any more advanced analysis.

Some work on Toom-Cook optimality was done by M. Bodrato
\cite{Bodrato07}.  He focused on the optimality of this algorithm
applied to the polynomial multiplication problem, as measured by the
number of operations required. Improving the floating numerical
accuracy of the result was not a goal of Bodrato's work, and no data
data is provided of the effect of the proposed techniques on numerical
accuracy. In contrast, our work studied Toom-Cook algorithm
application for the DNN convolution problem; we consider a much bigger
variety of input sizes and additional factors that have an impact on
accuracy like FP precision. However, as we have shown,
reducing the number of operations required for the
pre-/post-processing steps can improve numerical accuracy. Just as we
found that symmetric points can improve numerical accuracy, Bodrato
found that such points could reduce the number of required operations.

Note that our focus is on reducing Toom-Cook FP error to allow larger
outblock block sizes and thus fewer \textit{general
	multiplications}. However, error analysis might also be used for
other purposes such as identifying when the training error has become
smaller than the FP error, and that therefore training can be
terminated early. We leave such additional uses of error analysis to
future work.

\section{Conclusions}
We present an analysis of 1D and 2D Toom-Cook convolution with
multiple channels for DNN convolution. We identify and formalize the
error terms for Toom-Cook convolution, and prove that the error bound
grows at least exponentially with the size of the convolution.  This
result is supported by an analysis of conditioning, which shows that
the condition number of the convolution with respect to the norm grows
exponentially with convolution size.

We formally analyse the error bound for the ``modified'' Toom-Cook
algorithm, and prove that the error is close to that of the
non-modified algorithm operating on an input one element smaller.  We
observe empirically that using modified version reduces the error by
$20\%$ to over $70\%$, with no additional computation cost.

We observe that the order of point selection impacts the accuracy of
the algorithm. We propose a canonical evaluation order based on
Huffman trees. This fixes the order of evaluation, and empirically
reduces the error by a further 12\%-14\% \textit{in addition to} the
improvement from using the modified algorithm, again with no
additional computation cost.

There is no existing, widely accepted strategy for selecting
points. We observe that for DNNs there is a relatively small number of
important sizes, and we search empirically for good point selections
for those sizes. We identify four key criteria for good point
selections: (1) few significant mantissa bits, (2) positive/negative
point symmetry, (3) reciprocal point symmetry, and (4) subtractions of
points leading to few significant mantissa bits. For important
convolution sizes for DNNs, our empirically selected points yield much
better accuracy than the Chebyshev nodes. The Chebyshev nodes fail
to meet our criteria (1), (3) and (4) which makes them relatively
poor choices for the small convolutions found in DNNs.

We also proposed a mixed precision approach where the transforms are
computed in double precision, while the remaining inner loops are
computed in $FP32$. We found that perfoming pre/post processing
transforms in $FP64$ decreases error typically by around one third in
our experiments. We also empirically investigated summation across
channels using pairwise summation and found that this reduces the
error by around 20\% to 50\%. Unlike the other methods we
investigated, mixed precision transforms and pairwise summation impose
an additional computation cost.

Using our point selections and techniques for improving FP accuracy,
we can reduce the error between $2\times$ and orders of magnitude,
when compared with the Chebyshev nodes. Each approximately $2\times$
reduction in the error allows the output block size dimension to be
increased by around one. Whereas current implementations of Winograd
convolution for DNNs typically use output block sizes of $2 \times 2$
or $3 \times 3$, our methods allow larger block sizes, with a
resulting reduction in arithmetic operations (see Table
\ref{tab:mult}). This will allow faster DNN training and inference,
which is particularly valuable for resource-constrained mobile and
embedded systems.

\appendix
\section*{APPENDIX}
\section{Estimate of norm and conditioning}
\label{section.AppNormCond}
In this appendix, we provide some estimates of the norm and conditioning of the product $W(h,x) = A^{T}\left( Gh\odot B^{T}x \right)$ in terms of an expression involving only the
matrices $A$, $G$, and $B$ and one involving $x$ and $h$.  To do this, we reformulate this Hadamard product using a special product of two matrice called the Khatri-Rao 
product, which can be thought of as a specific block analog of the Hadamard product; see \cite{GV.2013}.
\begin{definition}
Let $C,F$ be $m\times n$ block matrices with the structure
\begin{equation}\nonumber
	C = \left( C_{ij} \right)_{ij} \qquad\mbox{and}\qquad F = \left( F_{ij} \right)_{ij}
\end{equation}
with block sizes $C_{ij},F_{ij}\in\mathbb{R}^{s_{i}\times s_{j}}$. The Khatri-Rao product is the blockwise Kronecker product defined by
$C \otimes_{KR} F = \left( C_{ij} \otimes F_{ij} \right)_{ij}$.
\end{definition}
The Khatri-Rao product can be used to express the Hadamard product $Gh\odot B^{T}x$ as a matrix-vector product, whereby we use block both 
matrices by row.  That means the $C \otimes_{KR} F$ here denotes the matrix whose rows consist of the Kronecker products of the rows of $C$ and $F$.
\begin{theorem}
The Hadamard product $Gh\odot B^{T}x$ admits the expression
\begin{equation}\nonumber
	Gh\odot B^{T}x = \left( B^{T}\otimes_{KR}G \right) \cdot \left( x\otimes h \right).
\end{equation}
\end{theorem}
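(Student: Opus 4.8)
The plan is to verify the identity entrywise, reducing the matrix equation to a scalar identity that is immediate from the definition of the Kronecker product. Both sides are vectors in $\mathbb{R}^{n}$, so it suffices to show that their $i$th components agree for each $i=1,\ldots,n$. First I would expand the left-hand side: writing $G_{i,:}$ and $(B^{T})_{i,:}$ for the $i$th rows of $G$ and $B^{T}$, the $i$th entry of the Hadamard product is a product of two dot products,
\begin{equation}\nonumber
\left(Gh\odot B^{T}x\right)_{i} = \left(\sum_{k=1}^{n_{h}} G_{ik}h_{k}\right)\left(\sum_{j=1}^{n} (B^{T})_{ij}x_{j}\right) = \sum_{j=1}^{n}\sum_{k=1}^{n_{h}} (B^{T})_{ij}\,G_{ik}\,x_{j}h_{k}.
\end{equation}

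Next I would compute the $i$th entry of the right-hand side. By the stated row-wise convention, the $i$th row of $B^{T}\otimes_{KR}G$ is the Kronecker product of rows $(B^{T})_{i,:}\otimes G_{i,:}$, a vector of length $n\,n_{h}$ whose entry indexed by the pair $(j,k)$ equals $(B^{T})_{ij}\,G_{ik}$. Using the same flattening, the entry of $x\otimes h$ indexed by $(j,k)$ is $x_{j}h_{k}$. Hence the $i$th component of $\left(B^{T}\otimes_{KR}G\right)\left(x\otimes h\right)$ is the sum over all pairs $(j,k)$ of $(B^{T})_{ij}\,G_{ik}\,x_{j}h_{k}$, which is exactly the double sum obtained above. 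Since $i$ was arbitrary, the two vectors coincide.

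The only point requiring care --- and the closest thing to an obstacle --- is consistency of the indexing convention between the two Kronecker products: the ordering used to flatten the pair $(j,k)$ into a single column index of $B^{T}\otimes_{KR}G$ must be the same ordering used to flatten $x\otimes h$. Because both are formed with the standard Kronecker product (first factor's index varying slowest), this alignment is automatic and no permutation matrix is needed. I would close by remarking that the statement is the row-wise (face-splitting) analogue of the familiar mixed-product identity $\left(Mu\right)\odot\left(Nv\right) = \left(M\otimes_{KR}N\right)\left(u\otimes v\right)$, instantiated at $M=B^{T}$, $N=G$, $u=x$, $v=h$, together with commutativity of $\odot$.
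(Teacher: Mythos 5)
Your proof is correct and follows essentially the same route as the paper: both verify the identity entrywise, writing the $i$th entry of the Hadamard product as the product of two dot products, expanding it into a double sum $\sum_{j,k}(B^{T})_{ij}G_{ik}x_{j}h_{k}$, and recognizing this as the dot product of the row $\left(B^{T}\right)_{i,:}\otimes G_{i,:}$ of the Khatri-Rao product with the vector $x\otimes h$. Your explicit check that the flattening conventions of the two Kronecker products agree is a point the paper handles only implicitly by writing out the block structure, but it is the same argument.
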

\begin{proof}
	Let 
	\begin{equation}\nonumber
		G = \begin{bmatrix}
		g_{1}^{T} \\ 
		g_{2}^{T} \\ 
		\vdots \\ 
		g_{nx}^{T}
		\end{bmatrix} \qquad\mbox{and}\qquad 
		B^{T} = \begin{bmatrix}
		b_{1}^{T} \\ 
		b_{2}^{T} \\ 
		\vdots \\ 
		b_{nx}^{T}
		\end{bmatrix} 
	\end{equation}
	with $b_{i}^{T} = \begin{bmatrix} b_{i1} & b_{i2} & \cdots & b_{i,n_{x}} \end{bmatrix} $.  Then the $i$th entry of 
	 $B^{T}h$ is $b_{i}^{T}x = \sum_{j=1}^{n_{x}}b_{ij}x_{j}$. and 
	 the $i$th entry of $Gh\odot B^{T}x$ can thus be written as 
	\begin{equation}\nonumber
		g_{i}^{T}h\cdot b_{i}^{T}x =  g_{i}^{T}h \cdot \sum_{j=1}^{n_{x}}b_{ij}x_{j}=  \sum_{j=1}^{n_{x}}b_{ij}g_{i}^{T}\left( x_{j} h\right),
	\end{equation}
	which can be written as the vector dot product
	\begin{equation}\nonumber
	g_{i}^{T}h\cdot b_{i}^{T}x = 	\begin{bmatrix} b_{i1} g_{i}^{T} & b_{i2} g_{i}^{T} & \cdots &  b_{i,n_{x}} g_{i}^{T} \end{bmatrix} \begin{bmatrix}x_{1}h\\x_{2}h \\ \vdots \\ x_{n_{x}}h\end{bmatrix} = \left(  b_{i}^{T} \otimes g_{i}^{T}
	\right)  \left( x \otimes h \right).
	\end{equation}
	This proves the result.
\end{proof}
We now calculate the condition number of $W(h,x)$.   Recall that for a continuous function $f(x)$ 
and a given vector norm $\left\| \cdot \right\|$, we can express the relative condition number in terms of the induced operator norm of its Jacobian $J_{f}(x)$
\begin{equation}\nonumber
	\kappa(x) = \dfrac{\left\| J_{f}(x) \right\| \cdot \left\| x \right\|}{\left\| f(x) \right\|};
\end{equation}
see, e.g., \cite{Trefethen.Bau.Numerical-Linear-Algebra.1997}. We can write the composition $W(h,x) = W(y)$ where $y:=y(h,x) = x\otimes h$.  Then we can use the chain rule to express the 
Jacobian of $W(h,x)$ as
\begin{equation}\nonumber
	J_{w}(h,x) =  A^{T}\left( B^{T}\otimes_{KR}G \right) J_{y}(x,h),
\end{equation}
where $J_{y}(h,x)$ is the Jacobian of $y(h,x)$.  Thus the condition number for $W(h,x)$ satisfies
\begin{equation}\label{eqn.WCond}
	\kappa_{W}(h,x) = \dfrac{\left\| A^{T}\left( B^{T}\otimes_{KR}G \right) J_{y}(x,h) \right\|\cdot \left\| x\otimes h \right\|}{\left\| A^{T}\left( B^{T}\otimes_{KR}G \right) \cdot \left( x\otimes h \right) \right\|}.
\end{equation}
We can then get an upper bound estimate for this condition number.  We begin by introducing some terminology.  If $C\in\mathbb{R}^{m\times m}$ is nonsingular, then
it is a well-known result that the condition number $\kappa_{2}(C)$ with respect to $\left\| \cdot \right\|_{2}$ satisfies $\kappa_{2}(C)=\dfrac{\sigma_{max}(C)}{\sigma_{min}(C)}$, where $\sigma_{max}(C),\sigma_{min}(C) > 0$ are, 
respectively, the largest and smallest singular value of $C$, which are guaranteed by the nonsingularity of $C$ to be nonzero. 
\begin{theorem}
	The condition number $\kappa_{W}(h,x)$ with respect to $\left\| \cdot \right\|_{1}$ admits the upper bound estimate
	\begin{equation}\nonumber
		\kappa_{W}(h,x) \leq \sqrt{n_{o}nn_{h}}\max\left\lbrace \left\| x \right\|_{1}, \left\| h \right\|_{1} \right\rbrace \kappa_{2}\left( A^{T}\left( B^{T}\otimes_{KR}G \right) \right).
	\end{equation}
\end{theorem}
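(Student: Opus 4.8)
The plan is to start from the exact expression for the condition number in \eqref{eqn.WCond} and bound its numerator and denominator separately, deferring the passage from induced $1$-norms to spectral quantities until the very end, so that the ratio collapses to $\kappa_2$ of the combined matrix. Throughout I would write $M := A^{T}\left( B^{T}\otimes_{KR}G \right)\in\mathbb{R}^{n_o\times nn_h}$ and $v:=x\otimes h\in\mathbb{R}^{nn_h}$, so that \eqref{eqn.WCond} reads
$$\kappa_{W}(h,x) = \frac{\left\| M\,J_{y}(x,h) \right\|_{1}\,\left\| v \right\|_{1}}{\left\| Mv \right\|_{1}}.$$
The three quantities to control are the operator norm $\left\| MJ_{y} \right\|_{1}$, the vector ratio $\left\| v \right\|_{1}/\left\| Mv \right\|_{1}$, and the Jacobian factor, and the aim is to arrange matters so that $\sigma_{\max}(M)$ emerges from the first group and $\sigma_{\min}(M)$ from the denominator.

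First I would treat the numerator. Since the induced $1$-norm is submultiplicative, $\left\| MJ_{y} \right\|_{1}\le \left\| M \right\|_{1}\,\left\| J_{y} \right\|_{1}$. The factor $\left\| J_{y} \right\|_{1}$ can be computed \emph{exactly}: because $y(h,x)=x\otimes h$ is bilinear, its Jacobian is the block matrix $J_{y} = \left[\, x\otimes I_{n_h} \ \big|\ I_n\otimes h \,\right]$, whose columns in the first block each have absolute column-sum $\left\| x \right\|_{1}$ and in the second block each have absolute column-sum $\left\| h \right\|_{1}$. Reading off the maximum column-sum gives $\left\| J_{y} \right\|_{1} = \max\{\left\| x \right\|_{1},\left\| h \right\|_{1}\}$, which is precisely the $\max$-term in the statement. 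For $\left\| M \right\|_{1}$ I would invoke the standard norm equivalence $\left\| M \right\|_{1}\le\sqrt{n_o}\,\left\| M \right\|_{2}=\sqrt{n_o}\,\sigma_{\max}(M)$, using that $M$ has $n_o$ rows.

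Next I would handle the vector ratio. From $\left\| w \right\|_{2}\le\left\| w \right\|_{1}\le\sqrt{d}\,\left\| w \right\|_{2}$ on $\mathbb{R}^{d}$ I obtain $\left\| v \right\|_{1}\le\sqrt{nn_h}\,\left\| v \right\|_{2}$ (here $d=nn_h$) and $\left\| Mv \right\|_{1}\ge\left\| Mv \right\|_{2}$, so that $\left\| v \right\|_{1}/\left\| Mv \right\|_{1}\le \sqrt{nn_h}\,\left\| v \right\|_{2}/\left\| Mv \right\|_{2}$. Bounding $\left\| Mv \right\|_{2}\ge\sigma_{\min}(M)\,\left\| v \right\|_{2}$ then cancels $\left\| v \right\|_{2}$ and leaves $\sqrt{nn_h}/\sigma_{\min}(M)$. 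Multiplying the three pieces yields
$$\kappa_{W}(h,x) \le \sqrt{n_o}\,\sigma_{\max}(M)\cdot\max\{\left\| x \right\|_{1},\left\| h \right\|_{1}\}\cdot\frac{\sqrt{nn_h}}{\sigma_{\min}(M)} = \sqrt{n_o nn_h}\,\max\{\left\| x \right\|_{1},\left\| h \right\|_{1}\}\,\kappa_{2}(M),$$
which is the claimed estimate once we recall $\kappa_{2}(M)=\sigma_{\max}(M)/\sigma_{\min}(M)$.

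The step I expect to be the main obstacle is the lower bound $\left\| Mv \right\|_{2}\ge\sigma_{\min}(M)\left\| v \right\|_{2}$. Because $M$ is rectangular with $nn_h>n_o$, it has a nontrivial kernel, and this inequality fails for $v$ lying in that kernel; strictly it holds only for $v$ orthogonal to $\ker M$. I would therefore need either to argue that the structured vector $v=x\otimes h$ has controlled (ideally vanishing) component in $\ker M$ for all nonzero $h,x$, or to interpret $\sigma_{\min}(M)$ as the smallest \emph{nonzero} singular value and restrict the estimate accordingly via the pseudoinverse. Resolving this degeneracy cleanly --- rather than the routine norm-equivalence bookkeeping of the other steps --- is the crux of the argument.
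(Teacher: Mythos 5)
Your proposal is correct and follows essentially the same route as the paper's proof: the same splitting of \eqref{eqn.WCond} via $\left\| M J_{y}\right\|_{1}\leq\left\| M\right\|_{1}\left\| J_{y}\right\|_{1}$ with the exact evaluation $\left\| J_{y}(x,h)\right\|_{1}=\max\left\lbrace \left\| x\right\|_{1},\left\| h\right\|_{1}\right\rbrace$, the same norm equivalences $\left\| M\right\|_{1}\leq\sqrt{n_{o}}\,\sigma_{max}(M)$ and $\left\| x\otimes h\right\|_{1}\leq\sqrt{nn_{h}}\left\| x\otimes h\right\|_{2}$, and the same denominator bound $\left\| Mv\right\|_{1}\geq\left\| Mv\right\|_{2}\geq\sigma_{min}(M)\left\| v\right\|_{2}$, where $M:=A^{T}\left( B^{T}\otimes_{KR}G\right)$. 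The obstacle you flag at the end---that $M$ is wide, so $\min_{\left\| y\right\|_{2}=1}\left\| My\right\|_{2}$ could vanish on $\ker M$---is addressed in the paper only by the remark that the matrices are ``in theory, square, nonsingular Vandermonde matrices'' before writing down exactly the chain of inequalities you describe, so your caution on this point is, if anything, more scrupulous than the published argument rather than a defect of your proposal.
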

\begin{proof}
	We must get appropriate lower bound estimates for the denominator of $\kappa_{w}(h,x)$ in order to get an upper bound estimate of the
	whole expression.  First, observe that we have from vector norm equivalence
	$\left\| A^{T}\left( B^{T}\otimes_{KR}G \right) \cdot \left( x\otimes h \right) \right\|_{1} \geq \left\| A^{T}\left( B^{T}\otimes_{KR}G \right) \cdot \left( x\otimes h \right) \right\|_{2}$.  Next, we take advantage of the fact that in theory, the three matrices are 
	square, nonsingular Vandermonde matrices.  This allows us to
	unambgiuously construct a chain of inequalities,
	\begin{align*}
		\left\| A^{T}\left( B^{T}\otimes_{KR}G \right) \cdot \left( x\otimes h \right) \right\|_{2} & = \left\| A^{T}\left( B^{T}\otimes_{KR}G \right) \cdot \dfrac{ x\otimes h}{\left\|  x\otimes h \right\|_{2}}  \right\|_{2}\left\|  x\otimes h \right\|_{2}\\
		& \geq \min_{y\in\mathbf{R}^{nn_{h}} \atop \left\| y \right\|_{2}=1} \left\| A^{T}\left( B^{T}\otimes_{KR}G \right) \cdot y \right\|_{2}\left\|  x\otimes h \right\|_{2}\\
		& \geq \sigma_{min}\left( A^{T}\left( B^{T}\otimes_{KR}G \right) \right)\left\|  x\otimes h \right\|_{2}.
	\end{align*}
	Applying this to \eqref{eqn.WCond}
	\begin{equation}\label{eqn.condIneqUpdate1}
		\kappa_{W}(h,x) \leq \dfrac{\left\| A^{T}\left( B^{T}\otimes_{KR}G \right) J_{y}(x,h) \right\|_{1}\cdot \left\| x\otimes h \right\|_{1}}{\sigma_{min}\left( A^{T}\left( B^{T}\otimes_{KR}G \right) \right)\left\|  x\otimes h \right\|_{2}}.
	\end{equation}
	Using vector norm equivalence, we can estimate $\left\| x\otimes h \right\|_{1} \leq \sqrt{nn_{h}}\left\| x\otimes h \right\|_{2}$.  
	We have the induced matrix norm inequality 
	\begin{equation}\nonumber
		\left\| A^{T}\left( B^{T}\otimes_{KR}G \right) J_{y}(x,h) \right\|_{1} \leq \left\| A^{T}\left( B^{T}\otimes_{KR}G \right)\right\|_{1}\left\| J_{y}(x,h) \right\|_{1}.
	\end{equation}
	Writing out $ J_{y}(x,h)$, one observes that $\left\| J_{y}(x,h) \right\|_{1} = \max\left\lbrace \left\| x \right\|_{1}, \left\| h \right\|_{1} \right\rbrace$.
	Furthermore, for $C\in\mathbb{R}^{m_{1}\times m_{2}}$, we have the matrix norm equivalence $\dfrac{1}{\sqrt{m_{1}}}\left\| C\right\|_{1} \leq \left\| C\right\|_{2} \leq \sqrt{m_{2}}\left\| C\right\|_{1} $.
	Applied in this setting yields
	\begin{equation}\nonumber
		\left\| A^{T}\left( B^{T}\otimes_{KR}G \right)\right\|_{1} \leq \sqrt{n_{o}}\left\| A^{T}\left( B^{T}\otimes_{KR}G \right)\right\|_{2} = \sqrt{n_{o}}\,\sigma_{max}\left(  A^{T}\left( B^{T}\otimes_{KR}G \right) \right).		
	\end{equation}
	Substituting into \eqref{eqn.condIneqUpdate1} yields the result after some simplificaton.
\end{proof}
What this demonstrates is that in the one-dimensional case, conditioning of the underlying 
problem, i.e., the relative condition number of the convolution with respect to $\left\| \cdot \right\|_{1}$, has a bound which grows worse exponentially
as the size of the Vandermonde matrices grows. This is not surprising in light of the error analysis shown earlier. 
However, we also see that the 
condition number has an upper bound depending on $\left\| x \right\|_{1}$ and $\left\| h \right\|_{1}$, meaning that we cannot 
rule out that the convolution may exhibit poorer conditioning in the case that $x$ and $h$ are pathologically large.

\pagebreak
\section{Proof of the theorem of two-dimensional Toom-Cook error bounds}
\label{sec:2dim}

\subsection{Two-dimensional Toom-Cook convolution error}

\begin{theorem}
	\label{theorem:TC-2D-error}
	Error for two-dimensional Toom-Cook convolution computation satisfies the componentwise bound equal to:
	\begin{equation}
	\label{eq:TC-2D-element}
	\begin{split} 
	|\hat{S}-S| &\leq \\
	\leq &|A^T|\left(|G||H||G^T| \odot |B^T||X||B|\right)|A|\left(2\alpha^{(n)}+2\beta^{(n)}+2\gamma^{(n_h)}+1\right)\varepsilon+\O(\varepsilon^2) \\
	\end{split}
	\end{equation}
	Error for two-dimensional Toom-Cook convolution computation satisfies the normwise bound equal to:
	\begin{equation}
	\label{eq:TC-2D-norm}
	\begin{split} 
	\|\hat{S}-S\|_1 &\leq \\
	& \leq \|A^T\|_1\thinspace\|G\|_F\thinspace\|H\|_F\thinspace\|G^T\|_F\thinspace\|B^T\|_F\thinspace\|X\|_F\thinspace\|B\|_F\thinspace\|A\|_1\thinspace R\varepsilon+O(\varepsilon^2)\;\;\;\;\;\;\;\;\;\\
	\mathrm{where} \; R& = 2\alpha^{(n)}+2\beta^{(n)}+2\gamma^{(n_h)}+1 \;\;\;\;\;\;\;\;\;\;\;\;\;\;\;\;\;\;\;\;\;\;\;\;
	\end{split}
	\end{equation}
	If we assume the same method of summation while matrix  and transpose matrix multiplication.\\ Where $\alpha^{(n)}$, $\beta^{(n)}$, $\gamma^{(n_h)}$ represents the error from multiplication by matrices $A^T$, $B^T$ and $G$ respectively.
\end{theorem}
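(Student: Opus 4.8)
The plan is to mirror the compositional argument used for the one-dimensional case in Theorem~\ref{theorem:TC-1D-error}, while accounting for the fact that each linear transform in the two-dimensional algorithm $A^T(GHG^T \odot B^TXB)A$ is applied on \emph{both} sides of the data matrix, i.e.\ as two successive matrix multiplications rather than one. I would write the computation as the composition $f = f_3 \circ f_2 \circ f_1$, where $f_1$ produces the pair of transformed matrices $U = GHG^T$ (kernel) and $V = B^TXB$ (input), $f_2(U,V) = U \odot V$ is the Hadamard product, and $f_3(Y) = A^T Y A$ is the output transform. This keeps the same three-stage structure $a_2,a_3,a_4$ as in the one-dimensional proof, so Higham's composition formula applies with the same shape.

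First I would bound the error of each two-sided transform. Computing $GHG^T$ as $(GH)G^T$ means applying the dot-product bound of Equation~\ref{eq:dot_prod} twice: the multiplication $GH$ contributes a factor $\gamma^{(n_h)}$, and the subsequent multiplication by $G^T$ contributes another $\gamma^{(n_h)}$ while also propagating the error already incurred. Collecting terms to first order in $\varepsilon$ gives the componentwise bound $|\Delta(GHG^T)| \le |G||H||G^T|\,2\gamma^{(n_h)}\varepsilon + O(\varepsilon^2)$, and analogously $|\Delta(B^TXB)| \le |B^T||X||B|\,2\beta^{(n)}\varepsilon + O(\varepsilon^2)$. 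The Hadamard product $f_2$ contributes a single $\varepsilon$ (the ``$+1$''), and the two-sided output transform $f_3$ contributes $2\alpha^{(n)}$ by the same two-multiplication argument.

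Next I would assemble these using Higham's formula for the error of a composed function, exactly as in the one-dimensional proof: the computed result equals the exact result plus $J_3 J_2 \Delta a_2 + J_3 \Delta a_3 + \Delta a_4$, where the Jacobians of the linear stages act as the transform matrices themselves on each side. Taking entrywise absolute values, bounding $|J_3|$ by the two-sided action $|A^T|(\cdot)|A|$, applying the triangle inequality, and retaining only first-order terms, all the error factors add and yield the multiplier $R = 2\alpha^{(n)} + 2\beta^{(n)} + 2\gamma^{(n_h)} + 1$, giving the componentwise bound \eqref{eq:TC-2D-element}. For the normwise bound I would take $\|\cdot\|_1$ of the componentwise inequality and use submultiplicativity of the induced $1$-norm together with $\|\cdot\|_F = \|\,|\cdot|\,\|_F$ and the Buniakowski-Schwartz estimate for the Hadamard product, as in the one-dimensional case, to split the product of matrix norms into $\|A^T\|_1\|G\|_F\|H\|_F\|G^T\|_F\|B^T\|_F\|X\|_F\|B\|_F\|A\|_1$.

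The main obstacle I anticipate is the careful componentwise bookkeeping of error propagation through the two-sided multiplications: when the second multiplication by $G^T$ acts on the already-perturbed factor $\widehat{GH}$, one must confirm that the cross term stays $O(\varepsilon^2)$, so that the two $\gamma^{(n_h)}$ contributions simply add rather than interact, and similarly that the Hadamard product of two independently perturbed matrices introduces only the single additional $\varepsilon$. Verifying that these first-order contributions combine additively, and that the absolute-value matrices factor cleanly as $|G||H||G^T|$ and $|B^T||X||B|$, is the delicate part; the remaining norm manipulations are routine given the one-dimensional template.
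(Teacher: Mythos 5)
Your proposal is correct and follows essentially the same route as the paper's proof: the same three-stage composition $f_3\circ f_2\circ f_1$, the same doubling of $\alpha^{(n)},\beta^{(n)},\gamma^{(n_h)}$ from the two-sided transforms with the Hadamard product contributing the $+1$, and the same normwise manipulations (consistency of $\|\cdot\|_1$, the Buniakowski--Schwartz inequality for the Hadamard product, and Frobenius-norm splitting). The only difference is presentational: the paper makes your phrase ``the Jacobians act as the transform matrices on each side'' rigorous by vectorizing via $\mathrm{vec}(MXM^T) = (M\otimes M)\,\mathrm{vec}(X)$, so that $J_3 = A^T\otimes A^T$ and Higham's composition formula applies literally to vectors, which is exactly the bookkeeping device that guarantees the cross terms you worried about remain $O(\varepsilon^2)$.
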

\begin{proof}
	In computing two-dimensional convolution we use the feature of Kronecker product that means $vec(MXM^T) = (M \otimes M) vec(X)$. Notice that despite the both formulas are mathematically equivalent the result in FP arithmetic could be different because of more multiplication operations while computing $vec(MXM^T)$ than for $(M \otimes M) vec(X)$.\\
	We have $$|fl(fl(A)Xfl(A^T))|=|A|\thinspace|X|\thinspace|A^T|2\alpha^{(n)}\varepsilon+O(\varepsilon^2)$$. 
	Let put component function 
	$$S=g(H,X)=g_3(g_2(g_1^{H}(H),g_1^{X}(X)))$$   
	$g(H,X):\mathbb{R}^{n_h \times n_h}\times \mathbb{R}^{n \times n} \rightarrow \mathbb{R}^{n_o \times n_o}$, $\;\;g(H,X)=A^T\left(GHG^T \odot B^TXB\right)A$, \\
	$g_3:\mathbb{R}^{n\times n} \rightarrow \mathbb{R}^{n_o \times n_o}$, $\;\;g_3(M)=A^Tm$,\\ $g_2:\mathbb{R}^{n\times n}\times \mathbb{R}^{n\times n}\rightarrow \mathbb{R}^{n\times n}$, 
	$\;\;g_2(M,N)=M \odot N$,\\
	$g_1:\mathbb{R}^{n_h \times n_h}\rightarrow \mathbb{R}^{n\times n}$, $\;\;g_1^{H}(M)=GMG^T$ and $g_1^{X}:\mathbb{R}^{n\times n}\rightarrow \mathbb{R}^{n\times n}$.\\ Similarly as for one-dimensional convolution we have:
	$$|vec(\hat{S})-vec(S)|\leq|J_3|\thinspace|J_2|\thinspace|\Delta b_2|+|J_2|\thinspace|\Delta b_3|+|\Delta b_4|=$$
	$$=|A^T \otimes A^T|\left[Diag((|B^T|\otimes |B^T|)vec(|X|)),Diag((|G|\otimes |G|)vec(|H|))\right]|$$
	$$|\left[
	\begin{array}{l}
	\left(|G|\otimes |G|\right)vec\left(|H|\right)2\gamma^{(n_h)} \varepsilon +O(\varepsilon^2)\\
	\left(|B^T|\otimes |B^T|)vec(|X|\right)|2\beta^{(n)} \varepsilon + O(\varepsilon^2)
	\end{array}
	\right]| + $$
	$$+\left(|A^T| \otimes |A^T|\right)\left(\left(|G|\otimes |G|\right)vec\left(|H|\right) \odot \left(|B^T|\otimes |B^T|\right)vec\left(|X|\right)\right)\varepsilon+O(\varepsilon^2)+$$
	$$\left(|A^T|\otimes |A^T|\right)\left(\left(|G|\otimes |G|\right)vec\left(|H|\right) \odot \left(|B^T|\otimes |B^T|\right)vec\left(|X|\right)\right)2\alpha^{(n)}\varepsilon+O(\varepsilon^2)=$$
	$$=\left(|A^T|\otimes |A^T|\right)\left(\left(|G|\otimes|G|\right)vec\left(|H|\right)\odot \left(|B^T|\otimes |B^T|\right)vec\left(|X|\right)\right)\left(2\gamma^{(n_h)}+2\beta^{(n)}\right)\varepsilon \thinspace+$$ 
	$$+\left(|A^T|\otimes |A^T|\right)\left(\left(|G|\otimes|G|\right)vec\left(|H|\right)\odot \left(|B^T|\otimes |B^T|\right)vec\left(|X|\right)\right)\left(2\alpha^{(n)}+1\right)\varepsilon+O(\varepsilon^2)=$$
	$$=\left(|A^T| \otimes |A^T|\right)\left(|G|\otimes |G|\right)vec\left(|H|\right) \odot \left(|B^T|\otimes |B^T|\right)vec\left(|X|\right)|\thinspace R\varepsilon+\thinspace O(\varepsilon^2)=$$
	$$R=2\gamma^{(n_h)}+2\beta^{(n)}+2\alpha^{(n)}+1$$
	Changing vectors again to matrices we obtain:
	$$|\hat{S}-S|\leq|A^T|\left(|G|\thinspace|H|\thinspace|G^T|\odot |B^T|\thinspace|X|\thinspace|B|\right)|A|\left(2\alpha^{(n)}+2\beta^{(n)}+2\gamma^{(n_h)}+1\right)\varepsilon+O(\varepsilon^2)$$
	We do the normwise error estimationin in the same way as for one-dimensional convolution, as the matrix consistency and Buniakowski-Schwartz inequality holds both for vectors and matrices.
	$$\|\hat{S}-S\|_1 \leq \|A^T\left(GHG^T\odot B^TXB\right)A\left(2\alpha^{(n)}+2\beta^{(n)}+2\gamma^{(n_h)}+1\right)\varepsilon+O(\varepsilon^2)\| \leq$$
	$$ \leq \|A^T\left(GHG^T\odot B^TXB\right)A\|_1\left(2\alpha^{(n)}+2\beta^{(n)}+2\gamma^{(n_h)}+1\right)\varepsilon+O(\varepsilon^2)$$
	From norm $\|\cdot\|_1$ consistency:
	$$\|\hat{S}-S\|_1 \leq \|A^T\|_1\|\left(GHG^T\right)\odot\left(B^TXB\right)\|_1\|A\|_1\left(2\alpha^{(n)}+2\beta^{(n)}+2\gamma^{(n_h)}+1\right)\varepsilon+O(\varepsilon^2)$$
	Applying Buniakowski-Schwartz inequality for Hadamard product:
	$$\|\hat{S}-S\|_1 \leq \|A^T\|_1\thinspace\|GHG^T\|_F\thinspace\|B^TXB)\|_F\thinspace\|A\|_1\left(2\alpha^{(n)}+2\beta^{(n)}+2\gamma^{(n_h)}+1\right)\varepsilon+O(\varepsilon^2)$$
	Finally from norm equivalency:
	$$\|\hat{S}-S\|_1 \leq \|A^T\|_1\thinspace\|G\|_F\thinspace\|H\|_F\thinspace\|G^T\|_F\thinspace\|B^T\|_F\thinspace\|X\|_F\thinspace\|B\|_F\thinspace\|A\|_1\thinspace R\varepsilon+O(\varepsilon^2)$$
	$$R=2\alpha^{(n)}+2\beta^{(n)}+2\gamma^{(n_h)}+1$$
\end{proof}

\section{Modified Toom-Cook convolution error analysis}
\label{sec:modWinograd_error}

\begin{theorem}
	The componentwise error for one-dimensional modified Toom-Cook for $q$th element of output is bounded by:
	\begin{equation}
	\label{eq:ModWinograd-1D-element-appendix}
	\begin{split}
	&|\hat{s}^{m(n)}_q-s^{m(n)}_q| = \\
	&=|{A^T_{q:}}^{(n-1)}|\thinspace\left(|G^{(n-1)}|\thinspace
	|h|\odot|B^{(n-1)T}|\thinspace|x^{(n-1)}|\right)\left(\gamma^{(n_h)}+\beta^{(n-1)}+\alpha^{(n-1)}+1\right)\varepsilon + O(\varepsilon^2) \\ 
	&\mathrm{for} \; q=1,..,n_o-1\\
	& \\
	&|{\hat{s}_{n_o}}^{(m(n))}-s^{m(n)}_{n_o}| \leq \\ &\leq |{A^T_{q:}}^{(n-1)}|\left(|G^{(n-1)}|\thinspace|h|\odot|B^{(n-1)T}|\thinspace|x^{(n-1)}|+
	|h_{n_h}|\thinspace|{B_{n:}}^{m(n)T}|\thinspace|x|\right)\\
	&\;\;\;\;\;\;\;\;\;\left(max\left\{\left(\gamma^{(n_h)}+\beta^{(n-1)}+\alpha^{(n-1)}+1\right), \left(\beta+1\right)\right\}+1\right)\varepsilon + O(\varepsilon^2)\\
	&\mathrm{for} \; q=n_o
	\end{split}
	\end{equation}
\end{theorem}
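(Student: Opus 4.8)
The plan is to exploit the block structure of the augmented matrices $A^{m(n)T}$, $G^{m(n)}$ and $B^{m(n)T}$ so that the size-$n$ computation decomposes into a size-$(n-1)$ Toom-Cook convolution plus a single scalar correction, and then to apply the already-established Theorem \ref{theorem:TC-1D-error} to each piece. First I would trace the three transforms on the full size-$n$ input. Because the last column of $B^{m(n)T}$ is zero above its final entry and the last row of $G^{m(n)}$ is $[0,\ldots,0,1]$, the first $n-1$ entries of $G^{m(n)}h$ and $B^{m(n)T}x$ coincide exactly with $G^{(n-1)}h$ and $B^{(n-1)T}x^{(n-1)}$, while the $n$-th Hadamard entry equals $h_{n_h}\cdot\sum_{j=1}^{n}M_j x_j$ and carries no error from the trivial $G$ row.

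For $q=1,\ldots,n_o-1$ the argument is immediate: the $q$-th row of $A^{m(n)T}$ has a zero in its last column, so it never touches the $n$-th Hadamard entry, and $\hat{s}^{m(n)}_q$ is produced by exactly the same floating-point operations as the $q$-th output of Toom-Cook at size $n-1$. Hence the componentwise bound \eqref{eq:TC-1D-element}, applied verbatim to the size-$(n-1)$ problem, yields the first formula with error factor $\gamma^{(n_h)}+\beta^{(n-1)}+\alpha^{(n-1)}+1$.

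The substantive case is $q=n_o$, where the last row of $A^{m(n)T}$ carries a $1$ in its final column and thus forms the sum of two pieces: the size-$(n-1)$ main part $A^{(n-1)T}_{n_o,:}\bigl(G^{(n-1)}h\odot B^{(n-1)T}x^{(n-1)}\bigr)$ and the correction $h_{n_h}\cdot\sum_{j} M_j x_j$. I would bound the main part by \eqref{eq:TC-1D-element} with factor $\gamma^{(n_h)}+\beta^{(n-1)}+\alpha^{(n-1)}+1$, and bound the correction by tracking its two error sources separately: the length-$n$ dot product against the last row of $B^{m(n)T}$ contributes $\beta^{(n)}$, while the single Hadamard multiplication by $h_{n_h}$ contributes $+1$, giving factor $\beta^{(n)}+1$ with no $\alpha$ or $\gamma$ term.

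The main obstacle, and the step demanding care, is combining these two pieces through the final floating-point addition. Writing each piece as a nonnegative magnitude bound $M_i$ times a factor $E_i$, the rounding of the sum contributes $(M_1+M_2)\varepsilon$, so the total error is $M_1E_1\varepsilon+M_2E_2\varepsilon+(M_1+M_2)\varepsilon=[M_1(E_1+1)+M_2(E_2+1)]\varepsilon$. Bounding this by $(M_1+M_2)(\max\{E_1,E_2\}+1)\varepsilon$ reproduces exactly the stated $\max$-plus-one factor, with $M_1+M_2$ matching the bracketed magnitude term $|G^{(n-1)}||h|\odot|B^{(n-1)T}||x^{(n-1)}|+|h_{n_h}||B^{m(n)T}_{n,:}||x|$ in the theorem; every contribution discarded along the way is $O(\varepsilon^2)$.
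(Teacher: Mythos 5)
Your proof is correct and follows essentially the same route as the paper's: use the block structure of $A^{m(n)T}$, $G^{m(n)}$, $B^{m(n)T}$ to identify the first $n_o-1$ outputs with the size-$(n-1)$ Toom-Cook computation, decompose the last output as the size-$(n-1)$ result plus the correction $h_{n_h}\,B^{m(n)T}_{n,:}x$, bound the two pieces by the factors $\gamma^{(n_h)}+\beta^{(n-1)}+\alpha^{(n-1)}+1$ and $\beta^{(n)}+1$ respectively, and merge them via a maximum. If anything, your explicit accounting of the final floating-point addition, yielding $M_1E_1\varepsilon+M_2E_2\varepsilon+(M_1+M_2)\varepsilon\leq(M_1+M_2)\left(\max\{E_1,E_2\}+1\right)\varepsilon$, is more careful than the paper's closing display, which omits the $+1$ after the maximum even though it appears in the theorem statement.
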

\begin{proof}
	We denote matrices constructed for Toom-Cook algorithm of input size $n$ as $G^{(n)}$, $A^{(n)T}$ and $B^{(n)T}$ and for modified Toom-Cook algorithm of input size $n$ as $G^{m(n)}$, $A^{m(n)T}$ and $B^{m(n)T}$.
	
	As we can see from the modified Toom-Cook algorithm definition we solve the problem of size $n$ by solving problem of size $n-1$ by Toom-Cook algorithm and proper modify the last element of output.
	Thus is obvious that error boundary of the first $n_o-1$ output points in modified Toom-Cook convolution is equal to the boundary of Toom-Cook convolution for input size $n-1$.
	The computation of the last $n_o$th output point include two parts: we compute the partial result using Toom-Cook convolution for input $n-1$ and add the missing value.
	${s_{n_o}}^{m(n)}={s^{(n-1)}_{n_o}}+h_{n_h}B^{m(n)_{n:}}x$
	So:
	$$|{\hat{s}_{n_o}}^{m(n)}-s_{n_o}^{m(n)}| \leq$$ 
	$$|{s_{n_o}}^{(n-1)}|\left(\gamma^{(n_h)}+\beta^{(n-1)}+\alpha^{(n-1)}+1\right)\varepsilon+O(\varepsilon^2) + |h_{n_h}|\thinspace|{B_{n:}}^{m(n)T}|\thinspace|x|(\beta^{(n)}+1)\varepsilon+O(\varepsilon^2)=$$
	$$=\left(|{s_{n_o}}^{(n-1)}|+|h_{n_h}|\thinspace|{B_{n:}}^{m(n)T}||x|\right)max\left\{\gamma^{(n_h)}+\beta^{(n-1)}+\alpha^{(n-1)}+1,\beta^{n}+1\right\}\varepsilon+O(\varepsilon^2)$$
\end{proof}

For DNN convolution the kernel size $n_h$ is almost always greater than or equal to $3$ and $n_h \leq n$. If we compute the convolution then from dot product error analysis we know that $\gamma^{(n_h)} \geq n_h-1$, $\beta^{(n-1)} \geq n-2$ and $\alpha^{(n-1)} \geq n-2$. In this case $\gamma^{(n_h)}+\beta^{(n-1)}+\alpha^{(n-1)}+1 \geq n_h+2n-4$, while $\beta^{(n)}+1 \leq n+2$.
When $n_h \geq 3$ the maximum in formula \ref{eq:ModWinograd-1D-element-appendix} is always equal to $\gamma^{(n_h)}+\beta^{(n-1)}+\alpha^{(n-1)}+1$. So we can formulate following corollary:

\begin{corollary}
	\label{cor:cor1}
	If we use the kernel size $n_h \geq 3$ computing for 1D convolution using modified Toom-Cook algorithm then the componentwise error for last output element is bounded as follows:
	\begin{equation}
	\label{eq:modT-C-1D-cor1}
	\begin{split}
	&|{\hat{s}_{n_o}}^{(m(n))}-s^{m(n)}_{n_o}| \leq \\ &\leq |{A^T_{q:}}^{(n-1)}|\left(|G^{(n-1)}|\thinspace|h|\odot|B^{(n-1)T}|\thinspace|x^{(n-1)}|+
	|h_{n_h}|\thinspace|{B_{n:}}^{m(n)T}|\thinspace|x|\right)R\varepsilon + O(\varepsilon^2)
	\end{split}
	\end{equation}
	where $R=\left(\gamma^{(n_h)}+\beta^{(n-1)}+\alpha^{(n-1)}+1\right)$
\end{corollary}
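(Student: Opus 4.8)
The statement is a direct specialization of the preceding theorem (equation~\eqref{eq:ModWinograd-1D-element-appendix}) to the last output point $q=n_o$. In that bound everything is already in the desired form except the error factor $\max\{A,B\}+1$, where $A=\gamma^{(n_h)}+\beta^{(n-1)}+\alpha^{(n-1)}+1$ and $B=\beta^{(n)}+1$. The entire content of the corollary is therefore to show that, under the hypothesis $n_h\geq 3$, this maximum is attained by its first argument $A$, so that the factor collapses to the single expression $R$ while the matrix part of the estimate is inherited unchanged from the theorem. The plan is thus to reduce everything to the one scalar inequality $A\geq B$ and then substitute.

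To resolve the maximum I would bound the two arguments from opposite sides. For $A$ I use the \emph{universal} lower bounds on the dot-product error constants recorded in Section~\ref{subsec:LinTransf}: even in the most favourable case, where all coefficients are integer powers of $2$, a dot product of $m$ entries carries an error constant of at least $m-1$. Applied with $m=n_h$ and twice with $m=n-1$ this gives $\gamma^{(n_h)}\geq n_h-1$, $\alpha^{(n-1)}\geq n-2$ and $\beta^{(n-1)}\geq n-2$, whence $A\geq n_h+2n-4$. For $B$ I use the worst-case (linear-summation) upper bound $\beta^{(n)}\leq n+1$ from the same section, so that $B\leq n+2$. Because the first estimate is a minimum over summation schemes and the second is a maximum, the comparison that follows is independent of the particular summation method chosen, which is what lets the corollary drop any ``same method'' caveat.

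It then remains to check that $n_h+2n-4\geq n+2$, i.e. $n_h+n\geq 6$. Since $n=n_o+n_h-1$ with $n_o\geq 1$ we have $n\geq n_h\geq 3$, so $n_h+n\geq 6$ (with equality only in the boundary case $n_h=3$, $n_o=1$); hence $A\geq B$ for every admissible size. Substituting $\max\{A,B\}=A$ into \eqref{eq:ModWinograd-1D-element-appendix} collapses the error factor to $R=\gamma^{(n_h)}+\beta^{(n-1)}+\alpha^{(n-1)}+1$, up to the harmless additive constant that the theorem carries outside the maximum, and reproduces the stated bound. I expect no genuine obstacle: the argument is purely the bookkeeping of the error constants together with the elementary inequality $n_h+n\geq 6$. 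The only points that deserve care are keeping the best-case lower bounds and the worst-case upper bound consistent so that the comparison is truly method-independent, and explicitly verifying the boundary case $n_h=3$, $n_o=1$ so that the simplification holds for all sizes rather than only asymptotically.
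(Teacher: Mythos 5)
Your proposal is correct and follows essentially the same route as the paper: the paper's own justification (the paragraph preceding the corollary) uses exactly the lower bounds $\gamma^{(n_h)}\geq n_h-1$, $\beta^{(n-1)}\geq n-2$, $\alpha^{(n-1)}\geq n-2$ to get $n_h+2n-4$ for the first argument of the maximum, the linear-summation upper bound $\beta^{(n)}+1\leq n+2$ for the second, and the hypothesis $n_h\geq 3$ (with $n_h\leq n$) to conclude the maximum is attained by the first argument. Your write-up is in fact slightly more careful than the paper's, since you make explicit the inequality $n_h+n\geq 6$, the boundary case $n_h=3$, $n_o=1$, and the bookkeeping of the additive constant outside the maximum.
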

Assuming any elements in matrices $G^{m(n)}$, $B^{m(n)T}$ and $A^{m(n)T}$ and linear summation while compute the dot product we have following boundaries:

\begin{corollary}
	\label{cor:cor2}
	\begin{equation}
	\label{eq:modT-C-1D-cor1}
	\begin{split}
	&|{\hat{s}_{n_o}}^{(m(n))}-s^{m(n)}_{n_o}| \leq \\ &\leq(|{A^T_{q:}}^{(n-1)}|(|G^{(n-1)}||h|\odot|B^{(n-1)T}||x^{(n-1)}|+
	|h_{n_h}||{B_{n:}}^{m(n)T}||x|)(n_h+2n+2)\varepsilon + O(\varepsilon^2)\nonumber
	\end{split}
	\end{equation}
\end{corollary}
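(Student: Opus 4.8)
The plan is to derive this corollary as a direct specialization of Corollary~\ref{cor:cor1}, substituting the explicit linear-summation values of the three dot-product constants into the abstract bound. Corollary~\ref{cor:cor1} already supplies the componentwise error for the last output point in the form $R = \gamma^{(n_h)}+\beta^{(n-1)}+\alpha^{(n-1)}+1$, valid whenever $n_h\geq 3$ (the regime in which the maximum in the preceding theorem collapses to this single expression). The matrix-and-vector prefactor carries over verbatim, so the only remaining work is to evaluate each constant under the linear-summation assumption for coefficients that need not be exactly representable.

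First I would recall from the discussion following \eqref{eq:dot_prod} that, for linear summation of a dot product of $k$ elements with \emph{any} (possibly inexactly represented) coefficients, the governing constant equals $k+1$. The essential bookkeeping step is then to read off the correct dot-product length for each of the three matrices. Because the modified algorithm of size $n$ is built from the Toom-Cook matrices at the reduced size $n-1$, the transforms $A^{(n-1)T}$ and $B^{(n-1)T}$ act on vectors of length $n-1$, contributing $\alpha^{(n-1)} = (n-1)+1 = n$ and $\beta^{(n-1)} = (n-1)+1 = n$; the kernel transform $G^{(n-1)}$ still has rows of length $n_h$, so it contributes $\gamma^{(n_h)} = n_h+1$.

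Substituting these values into $R$ gives $R = (n_h+1) + n + n + 1 = n_h + 2n + 2$, which is exactly the constant appearing in the claimed bound, completing the proof. There is no genuine analytic obstacle here, since the entire content is the specialization of the $k+1$ rule at the appropriate lengths. The one point demanding care is to use the \emph{reduced} size $n-1$ for both $\alpha$ and $\beta$, reflecting that the modified algorithm solves a problem one element smaller, while retaining the kernel length $n_h$ for $\gamma$; conflating these lengths is the only way the constant could come out wrong.
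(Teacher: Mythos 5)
Your proposal is correct and matches the paper's (implicit) argument exactly: the paper obtains this corollary by specializing Corollary~\ref{cor:cor1} to linear summation with arbitrary (not exactly representable) coefficients, i.e.\ $\alpha^{(n-1)}=\beta^{(n-1)}=n$ and $\gamma^{(n_h)}=n_h+1$, so that $R=(n_h+1)+n+n+1=n_h+2n+2$. Your bookkeeping of the reduced size $n-1$ for the $A$ and $B$ transforms versus the kernel length $n_h$ for $G$ is precisely the step the paper relies on.
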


\section{Tests results for Chebyshev nodes}
\label{sec:Chebyshev}
The table below presented the error of convolution computations for different sizes with Chebyshev points and the points we found the best. The column "Ratio" presents how many times the error for Chebyshev points is bigger then for points we found.

\pagebreak

\begin{table}[t]
	\begin{tabular}{c|cccccc}
		& 1D & 1D & 1D & 2D & 2D & 2D \\
		n & Our points & Chebyshev & Ratio & Our points & Chebyshev & Ratio \\
		\toprule
		4 & 2.49E-08 & 3.51E-08 & 1.41 & 7.65E-08 & 1.01E-07 & 1.32 \\
		5 & 5.21E-08 & 5.535E-08 & 1.06 & 2.46E-07 & 2.12E-07 & 8.62 \\
		6 & 7.32E-08 & 1.04E-07 & 1.42 & 3.49E-07 & 5.65E-07 & 1.62 \\
		7 & 1.00E-07 & 1.68E-07 & 1.68 & 7.11E-07 & 1.70E-06 & 2.39 \\
		8 & 1.23E-07 & 3.78E-07 & 3.07 & 9.21E-07 & 6.50E-06 & 7.06 \\
		9 & 2.59E-07 & 6.76E-07 & 2.61 & 4.07E-06 & 2.59E-05 & 6.36 \\
		10 & 3.87E-07 & 1.48E-06 & 3.82 & 8.23E-06 & 1.09E-04 & 1.32E+01 \\
		11 & 6.62E-07 & 3.18E-06 & 4.80 & 2.53E-05 & 4.84E-04 & 1.91E+01 \\
		12 & 8.42E-07 & 7.46E-06 & 8.86 & 3.67E-05 & 2.18E-03 & 5.94E+01 \\
		13 & 1.56E-06 & 1.53E-05 & 9.81 & 1.24E-04 & 1.00E-02 & 8.06E+01 \\
		14 & 2.10E-06 & 3.21E-05 & 15.29 & 2.32E-04 & 4.72E-02 & 2.03E+02 \\
		15 & 3.91E-06 & 7.12E-05 & 18.21 & 6.35E-04 & 2.31E-01 & 3.64E+02 \\
		16 & 5.06E-06 & 1.56E-04 & 30.83 & 1.04E-03 & 1.10 & 1.06E+03 \\
		17 & 1.68E-05 & 3.53E-04 & 21.01 & 1.31E-02 & 5.43 & 4.15E+02 \\
		18 & 2.36E-05 & 8.03E-04 & 34.03 & 0.24E-02 & 26.91 & 1.12E+04
	\end{tabular}
	\caption{Error for one- and two-dimensional convolution for Chebyshev points and points we found. Column "Ratio" present how many times the error for Chebyshev points is bigger then for points we found.}
\end{table}

	\section*{Acknowledgements}
	This work was supported by Science Foundation Ireland grant 12/IA/1381, and in part by Science Foundation Ireland grant 13/RC/2094 to Lero -- the Irish Software Research Centre (www.lero.ie).

\bibliographystyle{amsplain}
\bibliography{references_arxiv}

\end{document}